\documentclass[11pt, a4paper, oneside,reqno]{amsart}

\usepackage[english]{babel}
\usepackage{amsmath, amsthm, amsfonts, mathrsfs, amssymb}
\usepackage{mathtools}
\mathtoolsset{centercolon}
\usepackage{booktabs}
\usepackage[shortlabels]{enumitem}
\usepackage[colorlinks, citecolor = blue]{hyperref}
\usepackage{todonotes}

\usepackage{fullpage}
\usepackage{comment}

\usepackage{color}
\usepackage{graphicx}
\usetikzlibrary{arrows}

\setlist[itemize]{leftmargin=30pt}
\setlist[enumerate]{leftmargin=30pt}
\newcommand{\Z}{\ensuremath{\mathbb{Z}}}

\newcommand{\R}{\ensuremath{\mathbb{R}}}
\newcommand{\C}{\ensuremath{\mathbb{C}}}

\newcommand{\mc}{\mathcal}

\DeclarePairedDelimiter\abs{\lvert}{\rvert}

\DeclarePairedDelimiter\cbrace\{\}
\DeclarePairedDelimiter\ha()
\DeclarePairedDelimiter{\ip}\langle\rangle
\DeclarePairedDelimiter{\nrm}\lVert\rVert

\newcommand{\nrmb}[1]{\bigl\|#1\bigr\|}
\newcommand{\absb}[1]{\bigl|#1\bigr|}
\newcommand{\hab}[1]{\bigl(#1\bigr)}
\newcommand{\cbraceb}[1]{\bigl\{#1\bigr\}}
\newcommand{\ipb}[1]{\bigl\langle#1\bigr\rangle}
\newcommand{\bracb}[1]{\bigl[#1\bigr]}

\newcommand{\nrms}[1]{\Bigl\|#1\Bigr\|}
\newcommand{\abss}[1]{\Bigl|#1\Bigr|}
\newcommand{\has}[1]{\Bigl(#1\Bigr)}
\newcommand{\cbraces}[1]{\Bigl\{#1\Bigr\}}

\DeclareMathOperator{\ind}{\mathbf{1}}

\newcommand{\dd}{\hspace{2pt}\mathrm{d}}
\newcommand{\dx}{\hspace{2pt}\mathrm{d}x}

\newcommand{\ApqO}{A_{p,q}^\alpha(Q_0)}
\newcommand{\Apq}{A_{p,q}^\alpha(Q)}
\newcommand{\avg}[3]{\ip{#1}_{#2,#3}}

\newtheorem{theorem}{Theorem}
\newtheorem{corollary}[theorem]{Corollary}
\newtheorem{lemma}[theorem]{Lemma}
\newtheorem{proposition}[theorem]{Proposition}

{}

\theoremstyle{remark}
\newtheorem{remark}[theorem]{Remark}

\theoremstyle{definition}

\numberwithin{theorem}{section}
\numberwithin{equation}{section}

\allowdisplaybreaks

\title{The two-weight fractional Poincar\'e--Sobolev sandwich}

\author[Lorist]{Emiel Lorist}
\address{Emiel Lorist \hfill\break\indent 
Delft Institute of Applied Mathematics \hfill\break\indent
Delft University of Technology \hfill\break\indent
P.O. Box 5031 \hfill\break\indent
2600 GA Delft, The Netherlands}
\email{e.lorist@tudelft.nl}

\author[C.C.M.L. Wagenaar]{Carel Wagenaar}
\address{Carel Wagenaar \hfill\break\indent 
Delft Institute of Applied Mathematics \hfill\break\indent
Delft University of Technology \hfill\break\indent
P.O. Box 5031 \hfill\break\indent
2600 GA Delft, The Netherlands}
\email{c.c.m.l.wagenaar@tudelft.nl}

\thanks{The authors would like to thank \'Oscar Dom\'inguez for pointing out \cite{DLTYY24} and Carlos P\'erez for several other reference suggestions. The first author was partially financed by the Dutch Research Council (NWO) on the project ``The sparse revolution for stochastic partial differential equations'' with project number \href{https://doi.org/10.61686/ZGRMR99948}{VI.Veni.242.057}. 
}

\keywords{fractional Poincar\'e--Sobolev inequality, Muckenhoupt weight, Sparse domination}
\subjclass[2020]{46E35, 42B25, 26D10}

\begin{document}

 \begin{abstract}
We establish a two-weight fractional Poincar\'e--Sobolev sandwich, consisting of
a two-weight fractional Poincar\'e--Sobolev inequality and a two-weight embedding from the first-order Sobolev space to a Triebel--Lizorkin space defined via a difference norm. Our constants are asymptotically sharp as the fractional parameter approaches \(1\).
Our results are new even in the one-weight case.

For each inequality we give explicit quantitative dependence on Muckenhoupt weight characteristics and treat both subcritical and critical regimes, the former via elementary methods and the latter via sparse domination. As one of our main tools, we establish a new sparse domination result for Triebel--Lizorkin difference norms. Our methods unify, simplify and significantly extend various earlier approaches.
 \end{abstract}

\maketitle
\section{Introduction}

Poincar\'e and Poincar\'e--Sobolev inequalities are fundamental tools in the study of PDEs. For instance, in the classical De Giorgi--Nash--Moser scheme they play a key role in proving local
H\"older regularity for weak solutions of elliptic equations.
Weighted versions of these inequalities are central to the analysis of degenerate elliptic equations, beginning with the work of
Fabes, Kenig and Serapioni \cite{FKS82} (see \cite{HKM06} for an overview).
Their fractional analogues are obtained by replacing gradients with suitable difference seminorms. These inequalities are important both for finer local regularity questions and for nonlocal problems. Moreover, they are closely connected to the classical first-order theory through Bourgain--Brezis--Mironescu (BBM) type limits as the fractional order tends to $1$; see \cite{BBM01}.
The aim of this paper is to develop a
local two-weight theory that recovers and significantly extends these classical and fractional estimates in
a unified manner, with explicit quantitative dependence on the relevant weight
characteristics. We will formulate our results for cubes, from which extensions to, e.g., John domains follow by standard covering arguments \cite{DRS10}.

\medskip

For a cube $Q\subseteq \R^d$, an exponent $p \in [1,\infty)$, and a weight $\sigma\colon Q \to (0,\infty)$, we define the weighted Lebesgue space $L^p_\sigma(Q)$ as the space of all measurable functions $f \colon Q \to \C$ such that
\begin{align*}
	\nrm{f}_{L^p_\sigma(Q)} :=
	\has{\int_{Q} |f(x)|^p\sigma(x)^p\dd x}^{1/p}<\infty, 
\end{align*} 
where we stress that the weight enters as a multiplier, i.e. through $\sigma^p$ in the norm. For $r\in [1,\infty)$ and $s \in (0,1)$, we furthermore define the weighted Triebel--Lizorkin space $F^{s,\sigma}_{p,r}(Q)$ as the space of all $f \in L^p_\sigma(Q)$ such that
	\begin{align*}
	[f]_{F^{s,\sigma}_{p,r}(Q)}:= \has{\int_{Q} \left(\int_{Q} \frac{|f(x)-f(y)|^r}{|x-y|^{d+sr}}\mathrm{d}y\right)^{p/r}\sigma(x)^p\dd x}^{1/p}<\infty, 
	\end{align*}
    see, e.g., \cite{BSY23, Pra19, Tr83} for the connection to Triebel--Lizorkin spaces defined via Littlewood--Paley theory. 
    
Defining $\ip{f}_Q:=\frac{1}{|Q|}\int_Q f$, our main goal is to study the following sandwich of the $F^{s,\sigma}_{p,r}(Q)$-seminorm between quantities of zero- and first-order smoothness:
    \begin{align} 
		\nrm{f-\ip{f}_{Q}}_{L^{p_0}_{\sigma_0}(Q)} &\lesssim (1-s)^{\frac1r}  \cdot [f]_{F^{s,\sigma_s}_{p_s,r}(Q)}\lesssim   \nrmb{\abs{\nabla f}}_{L^{p_1}_{\sigma_1}(Q)}\label{eq:sandwich}
	\end{align}
under suitable conditions on $p_j,r,\sigma_j$ for $j\in \cbrace{0,s,1}$. The factor $(1-s)^{1/r}$ is the sharp BBM-factor mentioned above. Although combining the two parts of the sandwich in \eqref{eq:sandwich} yields a nonfractional two-weight Poincar\'e--Sobolev inequality, a direct treatment of this estimate is both quantitatively sharper and conceptually simpler. We will therefore  study the following three inequalities separately:
\begin{align}
\nrm{f-\ip{f}_{Q}}_{L^q_\omega(Q)} &\lesssim \nrmb{\abs{\nabla f}}_{L^p_\sigma(Q)}, \label{eq:introAC}\\
\nrm{f-\ip{f}_{Q}}_{L^q_\omega(Q)} &\lesssim (1-s)^{\frac1r}[f]_{F^{s,\sigma}_{p,r}(Q)}, \label{eq:introAB}\\
(1-s)^{\frac1r}[f]_{F^{s,\omega}_{q,r}(Q)} &\lesssim \nrmb{\abs{\nabla f}}_{L^p_\sigma(Q)}, \label{eq:introBC}
\end{align}
under suitable conditions on $p,q,r,\sigma,\omega$. 

The natural hypothesis in these inequalities is a local two-weight Muckenhoupt condition. For $\alpha\geq 0$, we write $(\omega,\sigma)\in A_{p,q}^\alpha(Q)$ if
\begin{align*}
[\omega,\sigma]_{A_{p,q}^\alpha(Q)}
:=\sup_{R\subseteq Q} |R|^\alpha \ip{\omega}_{q,R}\ip{\sigma^{-1}}_{p',R}<\infty,
\end{align*}
where the supremum is taken over all cubes $R \subseteq Q$ and $$\ip{f}_{p,Q}:= \begin{cases} \ip{|f|^p}_Q^{1/p} \quad &p <\infty,\\
 \nrm{f}_{L^\infty(Q)} \quad &p=\infty.\end{cases}$$  In the special case $\alpha=0$ and $p=q$, this is a localized version of the classical Muckenhoupt $A_p$-condition. 
 
Define
\[
\varepsilon:= \tfrac{t}{d}-\bigl(\tfrac1p-\tfrac1q\bigr)-\alpha,
\]
where $t$  denotes the difference in smoothness between the two sides of the inequality under consideration, i.e. $t=1$ for \eqref{eq:introAC}, $t=s$ for \eqref{eq:introAB} and $t=1-s$ for \eqref{eq:introBC}.
Then $\varepsilon$ quantifies the scaling deficit between the two sides of the inequality. We will call the case $\varepsilon=0$ the \emph{critical case}, which is where we obtain our most delicate estimates. We call  $\varepsilon>0$ the \emph{subcritical case}. Although this case may  be deduced from the critical one, it  allows for substantially simpler arguments, since the positive scaling deficit $\varepsilon>0$ makes it possible to sum across dyadic scales. We therefore treat the critical and subcritical regimes separately, leading to slightly larger admissible parameter ranges in the subcritical case.

Our main results may be summarized as follows. For the precise statements, we refer to Sections \ref{sec:AC}-\ref{sec:BC}.
\begin{enumerate}[(i)]
    \item In Theorem \ref{theorem:IleqIII} we prove the two-weight
    Poincar\'e--Sobolev inequality \eqref{eq:introAC}. Our claims to novelty in this theorem are rather mild. Instead, we would like to emphasize that our proofs unify and significantly simplify the existing literature.
    \item Theorem \ref{theorem:IleqII} proves the two-weight fractional
    Poincar\'e--Sobolev inequality \eqref{eq:introAB} with the sharp
    BBM-factor. As far as we are aware,
    our results with either $p\neq r$ or $\omega^q\neq \sigma^p$ are completely new. Our results for $p=r$ and $\omega^q= \sigma^p$ recover and extend various previous results.
    \item Theorem \ref{theorem:IIleqIII} establishes the two-weight Sobolev to Triebel--Lizorkin embedding
    \eqref{eq:introBC} with the sharp BBM-factor, in which the cases where either  $p\neq q$ or  $\omega^q \neq \sigma^p$ are new and the cases where $p=q$ and $\omega^q= \sigma^p$ again recover and extend various previous results.
\end{enumerate}
In all three inequalities, we will explicitly track the dependence on the $A_{p,q}^\alpha(Q)$-characteristic and, whenever needed, additional $A_\infty$-characteristics of $\omega$ and $\sigma$. Furthermore, we  derive one-weight consequences in Corollaries \ref{cor:AleqConeweight}, \ref{cor:AleqBoneweight} and \ref{cor:BleqConeweight}, which also contain many new cases. For a thorough comparison to the literature, we refer to Subsections \ref{sec:ACliterature}, \ref{sec:ABliterature} and \ref{subs:BCliterature}. Here we would like to note that weighted, fractional Poincar\'e--Sobolev inequalities have attracted a lot of attention in recent years and our results recover and significantly generalize results from several recent works, including \cite{HLYY25, HMPV23b, HMPV25, KV21, MPW24, PR19}.

\medskip

To illustrate our general proof strategy, let us briefly revisit the classical $(p,p)$-Poincar\'e inequality. By a dyadic telescoping argument combined with the Lebesgue differentiation theorem (see \eqref{eq:basicdom} for more details) and the $(1,1)$-Poincar\'e inequality, one has for a.e. $x\in Q$ that 
\begin{align*}
|f(x)-\ip{f}_Q|
&\overset{\text{\ref{it:a}}}{\lesssim} \sum_{R\in \mc{D}(Q)} \ip{|f-\ip{f}_R|}_R \ind_R(x)
\overset{\text{\ref{it:b}}}{\lesssim} \sum_{R\in \mc{D}(Q)} \ell(R)\ip{|\nabla f|}_R \ind_R(x),
\end{align*}
where $\mc{D}(Q)$ denotes the collection of all dyadic subcubes of $Q$ and $\ell(R)$ denotes the side length of $R$. By taking $L^p$-norms and using that dyadic cubes of a fixed side length are pairwise disjoint, we obtain
\begin{align}\tag*{\ref{it:c}}
\begin{aligned}
\nrm{f-\ip{f}_Q}_{L^p(Q)}
&\lesssim_d 
\nrms{\sum_{j=0}^\infty
\sum_{\substack{R\in \mc{D}(Q):\\ \ell(R)=2^{-j}\ell(Q)}}
\ell(R)\ip{|\nabla f|}_R \ind_R
}_{L^p(Q)} \\
&\leq \ell(Q)\sum_{j=0}^\infty 2^{-j}
\has{
\sum_{\substack{R\in \mc{D}(Q):\\ \ell(R)=2^{-j}\ell(Q)}}
\nrm{\nabla f}_{L^p(R)}^p
}^{1/p}
=  \ell(Q)\nrm{\abs{\nabla f}}_{L^p(Q)}.
\end{aligned}
\end{align}
This argument serves as a prototype for all of our results. In each case, the proof is organized around three ingredients:
\begin{enumerate}[(a)]
    \item\label{it:a} a domination principle;
    \item\label{it:b} an unweighted version of the inequality under consideration;
    \item\label{it:c} a norm estimate for the resulting dyadic object.
\end{enumerate}
In the prototypical argument above, we have
\[
\varepsilon=\tfrac{1}{d}-\bigl(\tfrac1p-\tfrac1p\bigr)-0=\tfrac1d>0.
\]
This  makes step \ref{it:c} straightforward, since it yields summable decay across dyadic scales. As a consequence, the domination step \ref{it:a} can also be taken in a very simple form. This is typical for our results in the subcritical regime.

In contrast, in the critical case $\varepsilon=0$, there is no decay across dyadic scales available in step \ref{it:c}. One therefore needs a much more refined version of the domination step \ref{it:a}. Our arguments will rely on the modern harmonic-analytic technique of sparse domination, which first appeared in the context of Poincar\'e--Sobolev inequalities in \cite{KV21,LLO21}. For the (fractional) Poincar\'e--Sobolev inequalities \eqref{eq:introAC} and \eqref{eq:introAB}, the required sparse domination principle is by now standard; see Lemma~\ref{lem:sparsedom}. For \eqref{eq:introBC}, however, we prove a novel sparse domination principle  for the difference quotients
$$
\has{\int_Q \frac{|f(x)-f(y)|^r}{|x-y|^{d+sr}}\dd y}^{1/r}, \qquad x \in Q
$$
in Theorem~\ref{thm:fracsparsedom}, which is one of our main contributions. Once such a domination principle is available, step \ref{it:b} reduces to the corresponding unweighted estimate, while step \ref{it:c} follows from weighted norm inequalities for sparse operators, see Proposition~\ref{prop:weaksparse}.

\medskip

This paper is organized as follows. In Section \ref{sec:weights&sparse}, we  list properties of the Muckenhoupt $\Apq$-class and relate this class to the classical $A_p$ Muckenhoupt classes. Moreover, we introduce (fractional) sparse operators and show boundedness of these operators and the (fractional) maximal operator. In Section \ref{sec:dom}, we discuss known and prove new domination principles required for step \ref{it:a} in the proof of our main results.  Sections \ref{sec:AC}, \ref{sec:AB} and \ref{sec:BC} will be about inequalities \eqref{eq:introAC}, \eqref{eq:introAB} and \eqref{eq:introBC} respectively and have a similar structure. We start each of those sections with our main two-weight result and extract a one-weight corollary afterwards. We end each of these  sections with a comparison to the existing literature. Finally, in Section \ref{sec:extensions}  we comment on several directions in which the two-weight fractional Poincar\'e--Sobolev sandwich can be extended and end with an appendix on the truncation method for (fractional) Poincar\'e--Sobolev inequalities.

	\section{Weights and sparse operators} \label{sec:weights&sparse}
In this section, we will start by defining the Muckenhoupt weight classes we will use and discuss their properties. Afterwards, we turn to sparse operators and their weighted norm estimates. 

\subsection{Weighted function spaces}
Let $p,r \in [1,\infty)$ and $s \in (0,1)$. Besides the weighted Lebesgue space $L^p_\sigma(Q)$ and weighted Triebel-Lizorkin space $F^{s,\sigma}_{p,r}(Q)$ defined in the introduction, we define the
weighted weak Lebesgue space $L^{p,\infty}_\sigma(Q)$ as the space of measurable $f \colon Q \to \R$ such that
\begin{align*}
  \nrm{f}_{L^{p,\infty}_\sigma(Q)}&:= \sup_{\lambda>0} \,\nrm{\lambda\cdot \ind_{\cbrace{\abs{f}>\lambda}}}_{L^p_\sigma(Q)}\\&\phantom{:}=\sup_{\lambda>0}\, \lambda \cdot \sigma^p\hab{\cbrace{x \in Q:\abs{f(x)}>\lambda}}^{\frac{1}{p}}<\infty,
\end{align*}
where the measure $\sigma^p$ is defined by $\sigma^p(A) := \int_A \sigma^p\dx$ for measurable $A \subseteq \R^d$. Note that $\nrm{f}_{L^{p,\infty}_\sigma(Q)}\neq \nrm{f\sigma}_{L^{p,\infty}(Q)}$. 

For a weight $\sigma$ such that $\sigma^{-1}\in L^{p'}(Q)$, we define the weighted Sobolev space $W^{1,p}_\sigma(Q)$ as the space of all  $f \in L^p_\sigma(Q)\subseteq L^1(Q)$ such that the distributional derivative $\partial_j f$ lies in $L^p_\sigma(Q)$ for $1\leq j\leq d$ with
$$
\nrm{f}_{W^{1,p}_\sigma(Q)} := \nrm{f}_{L^p_\sigma(Q)}+ \nrmb{\abs{\nabla f}}_{L^p_\sigma(Q)}.
$$
We stress once more that our normalization of the weight in $L^p$-spaces is a \emph{multiplier}, in contrast to the \emph{change of measure} given by  
$$
\nrm{f}_{L^p(Q,w)}:= \has{\int_{Q}\abs{f}^p w\dd x}^{\frac{1}{p}}.
$$
We will emphasize this difference by using weights $(\omega,\sigma)$ whenever we use the multiplier normalization, whereas we will use weights $w$ whenever we use the change of measure normalization.

\subsection{Muckenhoupt weights}  Fix a cube $Q \subseteq \R^d$, let $p,q \in [1,\infty)$ and $\alpha \geq 0$. 
We will discuss a few basic properties of the Muckenhoupt $A^\alpha_{p,q}(Q)$-class, which we defined in the introduction as the class of weights $(\omega,\sigma)$ such that 
\begin{align*}
[\omega,\sigma]_{A_{p,q}^\alpha(Q)}
:=\sup_{R\subseteq Q} |R|^\alpha \ip{\omega}_{q,R}\ip{\sigma^{-1}}_{p',R}<\infty.\end{align*}
Note that $\Apq$ would be empty for $\alpha < 0$ by the Lebesgue differentiation theorem. Furthermore, for all $\alpha \geq \frac1q + \frac1{p'}$ and $R\subseteq Q$ we have
\begin{align*}
        \abs{R}^\alpha  \ip{\omega}_{q,R}\ip{\sigma^{-1}}_{p',R}   &= \abs{R}^{\alpha-\frac1q-\frac1{p'}} \omega^q(R)\sigma^{-p'}(R) \leq \abs{Q}^{\alpha-\frac1q-\frac1{p'}} \omega^q(Q)\sigma^{-p'}(Q) \\
        &\leq \abs{Q}^{\alpha-\beta} [\omega,\sigma]_{A^\beta_{p,q}(Q)},
    \end{align*}
    and thus for $\beta = \frac1q + \frac1{p'}$
\begin{equation*}
     [\omega,\sigma]_{A^\alpha_{p,q}(Q)} = \abs{Q}^{\alpha-\beta} [\omega,\sigma]_{A^\beta_{p,q}(Q)}.   
\end{equation*}
Therefore, it is only useful to consider $0 \leq \alpha \leq \frac1q + \frac1{p'}$. Below we note some further basic properties of $A^\alpha_{p,q}(Q)$.

 \begin{lemma}Let $Q\subseteq \R^d$ be a cube, let $1\leq p\leq q <\infty$, $0 \leq \alpha \leq \frac1q + \frac1{p'}$ and $(\omega,\sigma)\in \Apq$. \label{lem:propertiesMuckenhoupt}
 \begin{enumerate}[(i)]
     \item \label{it:weightalphabeta} For  $\beta<\alpha$ we have $A_{p,q}^{\beta}(Q)\subsetneq A_{p,q}^{\alpha}(Q)$ with
\begin{equation*}
    [\omega,\sigma]_{A_{p,q}^{\alpha}(Q)} \leq |Q|^{\alpha-\beta} \cdot[\omega,\sigma]_{A_{p,q}^{\beta}(Q)}.
\end{equation*}			
\item \label{it:weightpq} For $p \leq p_0 \leq q_0 \leq q$  we have $ A_{p,q}^{\alpha}(Q)\subseteq A_{p_0,q_0}^{\alpha}(Q)$ with
\begin{equation*}
    [\omega,\sigma]_{A_{p_0,q_0}^{\alpha}(Q)} \leq  [\omega,\sigma]_{A_{p,q}^{\alpha}(Q)}.
\end{equation*}		
\item \label{it:alphat} For $1 \leq t \leq \min\cbrace{p,q}$ we have
\begin{equation*}
     [\omega^{t},\sigma^{t}]_{A_{p/t,q/t}^{\alpha t}(Q)}^{\frac1t}=[\omega,\sigma]_{A_{p_0,q}^\alpha(Q)},
\end{equation*}
where $\frac{1}{p_0} = \frac{1}{p}+\frac{1}{t'}$.
 \end{enumerate}
 \end{lemma}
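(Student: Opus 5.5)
For \eqref{it:weightalphabeta} and \eqref{it:weightpq} I would work cube by cube, comparing the quantity inside the supremum defining $[\omega,\sigma]_{A^\alpha_{p,q}(Q)}$ with the one for the other class, and then take the supremum over $R\subseteq Q$. Part \eqref{it:alphat} is an identity of the same expressions after unwinding exponents. The only non-formal point is the strictness of the inclusion in \eqref{it:weightalphabeta}, which needs an example.

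\emph{Part \eqref{it:weightalphabeta}.} For every cube $R\subseteq Q$ we have $|R|\le |Q|$ and $\alpha-\beta>0$. Hence
\[
|R|^\alpha \ip{\omega}_{q,R}\ip{\sigma^{-1}}_{p',R}
=|R|^{\alpha-\beta}\,|R|^\beta \ip{\omega}_{q,R}\ip{\sigma^{-1}}_{p',R}
\le |Q|^{\alpha-\beta}\,[\omega,\sigma]_{A^\beta_{p,q}(Q)}.
\]
Taking the supremum over $R$ gives the estimate and the inclusion.

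For strictness I would use power weights centred at a corner $x_0$ of $Q$:
\[
\omega(x)=|x-x_0|^{-b},\qquad \sigma^{-1}(x)=|x-x_0|^{-a},\qquad a,b\ge 0,\quad bq<d,\quad ap'<d,
\]
with $a=0$ forced if $p=1$.
\begin{itemize}
\item For cubes $R$ of side $\ell$ at $x_0$ one has $\ip{\omega}_{q,R}\simeq \ell^{-b}$ and $\ip{\sigma^{-1}}_{p',R}\simeq \ell^{-a}$. So the $A^\gamma$-quantity behaves like $\ell^{d\gamma-a-b}$.
\item I would choose $a+b\in(d\beta,d\alpha]$. This is possible because $\alpha\le \frac1q+\frac1{p'}$ and $a,b$ can range up to (just below) $d/p'$ and $d/q$; when $\alpha$ equals the endpoint, take $a+b$ slightly below $d\alpha$ but above $d\beta$. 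Then the $A^\beta$-quantity blows up as $\ell\to0$, while the $A^\alpha$-quantity stays bounded on these cubes.
\item It remains to check that the $A^\alpha$-supremum is also finite over all other cubes $R\subseteq Q$. By the usual analysis of power weights, cubes far from $x_0$ relative to their size see essentially constant weights, and the remaining cubes are comparable to cubes containing $x_0$.
\end{itemize}
This verification, including the endpoint exponents, is where I expect the main (though routine) work.

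\emph{Part \eqref{it:weightpq}.} This is Hölder's (Jensen's) inequality for averages, $\ip{g}_{r_0,R}\le \ip{g}_{r,R}$ for $r_0\le r$. Since $q_0\le q$, we get $\ip{\omega}_{q_0,R}\le\ip{\omega}_{q,R}$. Since $p_0\ge p$ gives $p_0'\le p'$, we get $\ip{\sigma^{-1}}_{p_0',R}\le \ip{\sigma^{-1}}_{p',R}$. Multiplying by $|R|^\alpha$ and taking suprema gives the claim.

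\emph{Part \eqref{it:alphat}.} I would compute the three factors separately.
\begin{itemize}
\item $\bigl(|R|^{\alpha t}\bigr)^{1/t}=|R|^\alpha$.
\item $\ip{\omega^t}_{q/t,R}^{1/t}=\ip{\omega}_{q,R}$.
\item $\ip{\sigma^{-t}}_{(p/t)',R}^{1/t}=\ip{\sigma^{-1}}_{t(p/t)',R}$.
\end{itemize}
Next, $t(p/t)'=\frac{pt}{p-t}$, while $\frac1{p_0'}=1-\frac1p-\frac1{t'}=\frac1t-\frac1p=\frac{p-t}{pt}$, so $t(p/t)'=p_0'$. In the case $t=p$ both sides equal $\infty$, where the $L^\infty$ convention gives $\ip{\sigma^{-t}}_{\infty,R}^{1/t}=\ip{\sigma^{-1}}_{\infty,R}$.

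Since $t\ge1$, the map $x\mapsto x^{1/t}$ is increasing, so it commutes with the supremum over $R$, and the identity follows.
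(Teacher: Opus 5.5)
Your proposal is correct and follows essentially the same route as the paper: the cube-by-cube bound $|R|^\alpha\le |Q|^{\alpha-\beta}|R|^\beta$ for (i), H\"older for (ii), the exponent identity $t(p/t)'=p_0'$ for (iii), and a pair of power weights $\omega=|x-x_0|^{-b}$, $\sigma^{-1}=|x-x_0|^{-a}$ for the strict inclusion. Your version of the example is in fact a bit more careful than the paper's: you put the singularity at a corner of $Q$, force $a=0$ when $p=1$, and allow $a+b=d\alpha$, which also covers $\alpha=0$; your sketched check of the cubes away from $x_0$ is the standard one.
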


 \begin{proof}
    For \ref{it:weightalphabeta}, the inequality follows from
    \[
        \abs{R}^\alpha= \abs{R}^{\alpha-\beta}\abs{R}^\beta \leq \abs{Q}^{\alpha-\beta}  \abs{R}^\beta.
    \]
    For the strict inclusion take $$\ha{\omega,\sigma}=\hab{|x|^{-\gamma_1d},|x|^{\gamma_2d}},$$ where $\gamma_1\in (0,\frac1q)$ and $\gamma_2 \in [0,\frac{1}{p'})$ such that $\beta < \gamma_1+\gamma_2 < \alpha$. Then $(\omega,\sigma)\in A^\alpha_{p,q}(Q)\backslash A^\beta_{p,q}(Q)$.
    
    The second statement \ref{it:weightpq} follows from H\"older's inequality. For \ref{it:alphat}, the equality easily follows after noting that
    \[
        \frac{1}{t(p/t)'}= \frac1t - \frac1p= \frac1{p_0'}. \qedhere 
    \]
 \end{proof}
For a cube $Q\subseteq \R^d$ we say that $w$ belongs to the Muckenhoupt $A_\infty(Q)$-class and write $w \in A_{\infty}(Q)$ if
$$
[w]_{A_{\infty}(Q)}:=\sup_{R\subseteq Q}\frac{1}{w(R)}\int_R \sup_{S\subseteq R} \ip{w}_{1,S} \ind_S(x) \dd x,
$$
where we omit $Q$ if $[w]_{A_{\infty}(Q)}<\infty$ for all $Q\subseteq \R^d$. By H\"older's inequality, we know that for $w\in A_\infty(Q)$ we have $w^t \in A_\infty(Q)$ for all $t \in (0,1]$ with
\begin{equation}\label{eq:Ainftyembeds}
    [w^t]_{A_\infty(Q)} \leq [w]_{A_\infty(Q)}^t.
\end{equation}
As discussed above, the relevant range for $\alpha$ in $A^{\alpha}_{p,q}(Q)$ is $[0,\tfrac{1}{q}+\tfrac1{p'}]$. If we also have that either $\omega^q\in A_\infty(Q)$ or $\sigma^{-p'}\in A_\infty(Q)$, then the relevant range reduces to $[0,\tfrac{1}{q}+\tfrac1{p'})$, as shown in the following proposition.
\begin{proposition}\label{prop:AbetaAinfty}
Let $Q\subseteq \R^d$ be a cube, let $1\leq p\leq q <\infty$ and $\alpha = \frac1q+\frac1{p'}$. Suppose that $(\omega,\sigma) \in A^\alpha_{p,q}$ with 
either $\omega^q\in A_\infty(Q)$ or $p>1$ and $\sigma^{-p'}\in A_\infty(Q)$. Then there exists a $0\leq \beta <\alpha$ such that $(\omega,\sigma) \in  A^{\beta}_{p,q}(Q)$ with
    \[
        [\omega,\sigma]_{A_{p,q}^\beta(Q)}\leq 2 \,|Q|^{\beta-\alpha} \cdot [\omega,\sigma]_{A_{p,q}^\alpha(Q)}. 
    \]
\end{proposition}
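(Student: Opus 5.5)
The plan is to reduce the claim to a power-decay estimate for the $A_\infty$ weight, which itself comes from a sharp reverse Hölder inequality. First rewrite the characteristic at the endpoint $\alpha=\frac1q+\frac1{p'}$. For every cube $R\subseteq Q$,
\[
|R|^\alpha\ip{\omega}_{q,R}\ip{\sigma^{-1}}_{p',R}=\omega^q(R)^{1/q}\,\sigma^{-p'}(R)^{1/p'} .
\]
This quantity is monotone in $R$, so $[\omega,\sigma]_{A^\alpha_{p,q}(Q)}=\omega^q(Q)^{1/q}\sigma^{-p'}(Q)^{1/p'}$. For $\beta<\alpha$ we then have
\[
|R|^\beta\ip{\omega}_{q,R}\ip{\sigma^{-1}}_{p',R}=|R|^{\beta-\alpha}\,\omega^q(R)^{1/q}\sigma^{-p'}(R)^{1/p'} .
\]
Hence the claim reduces to showing, in the case $\omega^q\in A_\infty(Q)$, that
\[
\omega^q(R)^{1/q}\le 2\Bigl(\tfrac{|R|}{|Q|}\Bigr)^{\alpha-\beta}\omega^q(Q)^{1/q}\qquad\text{for all cubes } R\subseteq Q .
\]
Together with the trivial bound $\sigma^{-p'}(R)\le\sigma^{-p'}(Q)$, this gives the stated estimate. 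The case $p>1$, $\sigma^{-p'}\in A_\infty(Q)$ is symmetric: interchange the roles of the two weights and use exponent $1/p'$ instead of $1/q$. Here $p>1$ is needed so that $p'<\infty$ and $\sigma^{-p'}$ is a genuine weight.

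The key ingredient is the sharp reverse Hölder inequality for Fujii--Wilson $A_\infty$ weights (Hytönen--Pérez), applied to $w=\omega^q$. It states that with $\eta=\frac{1}{c_d[w]_{A_\infty(Q)}}$,
\[
\ip{w^{1+\eta}}_R^{1/(1+\eta)}\le 2\ip{w}_R\qquad\text{for all cubes } R\subseteq Q .
\]
I expect the main obstacle to be checking that this localizes to $A_\infty(Q)$. The Hytönen--Pérez proof uses only cubes contained in the given cube, so it should go through verbatim, but this must be confirmed; I only need it for the cube $Q$ itself. Granting this, Hölder's inequality with exponent $1+\eta$ gives
\[
w(R)\le |R|^{\frac{\eta}{1+\eta}}\Bigl(\int_Q w^{1+\eta}\Bigr)^{\frac1{1+\eta}}\le 2|R|^{\frac{\eta}{1+\eta}}|Q|^{\frac1{1+\eta}}\ip{w}_Q=2\Bigl(\tfrac{|R|}{|Q|}\Bigr)^{\frac{\eta}{1+\eta}}w(Q).
\]

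Taking the $1/q$-th power, and using $2^{1/q}\le2$, yields the required decay with exponent $\alpha-\beta=\frac{\eta}{(1+\eta)q}>0$. To guarantee $\beta\ge0$, choose
\[
\beta:=\max\Bigl\{\alpha-\tfrac{\eta}{(1+\eta)q},\,0\Bigr\}.
\]
In the case $\beta=0$, the exponent $\alpha-\beta=\alpha$ is no larger than $\frac{\eta}{(1+\eta)q}$, and since $|R|/|Q|\le1$ the decay estimate still holds with this smaller exponent. Combining the decay estimate with the monotonicity of $\sigma^{-p'}(R)$ then gives, for every $R\subseteq Q$,
\[
|R|^\beta\ip{\omega}_{q,R}\ip{\sigma^{-1}}_{p',R}\le 2|Q|^{\beta-\alpha}\omega^q(Q)^{1/q}\sigma^{-p'}(Q)^{1/p'}=2|Q|^{\beta-\alpha}[\omega,\sigma]_{A^\alpha_{p,q}(Q)} .
\]
Taking the supremum over $R\subseteq Q$ finishes the proof.
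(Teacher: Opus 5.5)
Your proposal is correct and is essentially the paper's proof. Both arguments apply the sharp reverse H\"older inequality to $\omega^q$ on the single cube $Q$ (the paper cites Hyt\"onen--P\'erez--Rela for exactly this localized statement), then combine H\"older's inequality with the monotonicity of $R\mapsto\omega^{q}(R)$ and $R\mapsto\sigma^{-p'}(R)$. Your exponent $\beta=\alpha-\frac{\eta}{(1+\eta)q}=\frac{1}{(1+\eta)q}+\frac{1}{p'}$ is precisely the paper's choice $\beta=\frac{1}{qs}+\frac{1}{p'}$ with $s=1+\eta$; it is always positive, so the truncation $\max\{\cdot,0\}$ is never active.
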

\begin{proof}
    Let $(\omega,\sigma)\in A^\alpha_{p,q}(Q)$ and assume $\omega^q\in A_\infty$, the proof for $\sigma^{-p'}\in A_\infty(Q)$ is similar. By \cite[Theorem 2.3]{HPR12}, there exists an $s>1$ such that 
    $$
\ip{\omega}_{qs,Q} \leq 2 \,\ip{\omega}_{q,Q}.
    $$
    Now take $\beta =\frac1{qs}+\frac{1}{p'}<\alpha$. Then, by H\"older's inequality, we have
    \begin{align*}
        |R|^\beta \ip{\omega}_{q,R}\ip{\sigma^{-1}}_{p',R} \leq |R|^\beta \ip{\omega}_{sq,R}\ip{\sigma^{-1}}_{p',R} &= \omega^{qs}(R)^{\frac1{qs}} \sigma^{-p'}(R)^{\frac{1}{p'}}\\
&\leq  \omega^{qs}(Q)^{\frac1{qs}} \sigma^{-p'}(Q)^{\frac{1}{p'}}\\   
&\leq |Q|^{\frac1{qs}+\frac1{p'}} \cdot  \ip{\omega}_{qs,Q} \ip{\sigma^{-1}}_{p',Q}\\  
&\leq 2 \,[\omega,\sigma]_{A_{p,q}^\alpha(Q)} |Q|^{\beta-\alpha}.
    \end{align*}
    Taking the supremum over all cubes $R\subseteq Q$ finishes the proof.
\end{proof} 
In view of Proposition \ref{prop:AbetaAinfty}, whenever $\omega^q\in A_\infty(Q)$ or $\sigma^{-p'}\in A_\infty(Q)$, we can take $\alpha <\frac1q+\frac1{p'}$ without excluding any admissible weights. Recall that in the two-weight fractional Poincar\'e--Sobolev inequalities under study, we will use the parameter
\[
    \varepsilon = \tfrac{t}{d} - (\tfrac{1}{p}-\tfrac1q)-\alpha,
\]
where $t$ denotes the difference in smoothness ($t=1$, $t=s$ or $t=1-s$, respectively). In the subcritical case $\varepsilon>0$ we have 
\[
    \alpha < \tfrac{t}{d} - (\tfrac{1}{p}-\tfrac1q) \leq 1- (\tfrac{1}{p}-\tfrac1q) = \tfrac1q+\tfrac{1}{p'}.
\]
In the critical case $\varepsilon=0$ we similarly have  $\alpha \leq \frac1q+\frac{1}{p'}$. However, in the critical cases we always have $\omega^q\in A_\infty(Q)$ or $\sigma^{-p'}\in A_\infty(Q)$, so that the endpoint case $\alpha= \frac1q+\frac{1}{p'}$ does not yield any additional weights. Therefore, we will assume in the remainder of this paper that $\alpha < \frac1q+\frac{1}{p'}$.

\medskip

The weight class $\Apq$ in the case $p=q \in [1,\infty)$ and $\alpha=0$ is a rescaled version of the classical Muckenhoupt $A_p$-class. Indeed, for a weight $w$ we say that  $w\in A_p$ if
	\[
	[w]_{A_p}=\sup_{Q} \avg{w}{1}{Q}\avg{w^{-1}}{\frac{1}{p-1}}{Q} < \infty,
	\]
where the supremum is taken over all cubes $Q \subseteq \R^d$.
	Note that for a fixed cube $Q\subseteq \R^d$ we have 
	\begin{align*}
		[w^{\frac1p},w^{\frac1p}]_{A^0_{p,p}(Q)} = \sup_{R\subseteq Q} \avg{w}{1}{R}^{\frac1p}\avg{w^{-1}}{\frac{1}{p-1}}{R}^{\frac1p} \leq [w]_{A_p}^{\frac1p},
	\end{align*}
	so that $w\in A_p$ implies $(w^\frac1p,w^{\frac1p})\in A^0_{p,p}(Q)$. It is well-known that $w \in A_p$ implies that $w \in A_\infty$. Therefore,  for $(w,w)\in A_{p,p}^0(Q)$ we have $w^p \in A_\infty(Q)$ as well. More generally, we have the following lemma.
	\begin{lemma}\label{lem:ApApq}
		Let $1\leq u\leq p\leq q<\infty$, $w \in A_u$ and let $\alpha  = (u-1) (\frac1p-\frac1q)$. Then we have for any cube $Q \subseteq \R^d$
		$$
		[w^\frac1q,w^{\frac1p}]_{A_{p,q}^\alpha(Q)} \leq [w]_{A_p}^{\frac{1}{p}} [w]_{A_u}^{\frac{1}{p}-\frac1q} \cdot \has{\frac{\abs{Q}^u}{w(Q)}}^{\frac1p-\frac1q}
		$$
		and thus $(w^\frac1q,w^{\frac1p}) \in A_{p,q}^\alpha(Q)$.
	\end{lemma}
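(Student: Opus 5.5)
The plan is to compute the characteristic directly, cube by cube, and to split the resulting expression into an $A_p$-part and a leftover power of $\ip{w}_R$. The leftover is then controlled by the standard $A_u$ comparison between a subcube $R$ and the ambient cube $Q$.

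Fix a cube $R\subseteq Q$ and suppose first that $p>1$. Since $\ip{w^{1/q}}_{q,R}=\ip{w}_R^{1/q}$ and $\ip{w^{-1/p}}_{p',R}=\ip{w^{-\frac{1}{p-1}}}_R^{\frac{p-1}{p}}$, we can write
\begin{align*}
|R|^\alpha\ip{w^{1/q}}_{q,R}\ip{w^{-1/p}}_{p',R}
=\Bigl(\ip{w}_R\,\ip{w^{-1}}_{\frac{1}{p-1},R}\Bigr)^{\frac1p}\cdot |R|^{\alpha}\,\ip{w}_R^{-(\frac1p-\frac1q)} .
\end{align*}
The first factor is at most $[w]_{A_p}^{1/p}$. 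Note that $[w]_{A_p}\le[w]_{A_u}<\infty$ since $u\le p$, so this is finite. When $p=1$ we have $u=1$, and the same computation works with $\ip{w^{-1}}_{\frac1{p-1},R}$ read as $\nrm{w^{-1}}_{L^\infty(R)}$. Because $\alpha=(u-1)(\frac1p-\frac1q)$, the second factor equals
\[
\Bigl(\frac{|R|^{u-1}}{\ip{w}_R}\Bigr)^{\frac1p-\frac1q}=\Bigl(\frac{|R|^{u}}{w(R)}\Bigr)^{\frac1p-\frac1q}.
\]

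The key step is therefore the inequality $\frac{|R|^u}{w(R)}\le [w]_{A_u}\frac{|Q|^u}{w(Q)}$ for $R\subseteq Q$. This is the classical consequence of the $A_u$ condition: by H\"older's inequality, for $u>1$,
\[
\frac{|R|}{|Q|}=\frac{1}{|Q|}\int_Q \ind_R\, w^{\frac1u}w^{-\frac1u}\le \Bigl(\frac{w(R)}{|Q|}\Bigr)^{\frac1u}\ip{w^{-1}}_{\frac1{u-1},Q}^{\frac1u},
\]
and hence $(|R|/|Q|)^u\le [w]_{A_u}\,w(R)/w(Q)$. For $u=1$ the same bound follows from $\ip{w}_Q\le[w]_{A_1}\operatorname{ess\,inf}_Q w\le[w]_{A_1}\ip{w}_R$. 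Raising this to the power $\frac1p-\frac1q\ge0$ and taking the supremum over $R\subseteq Q$ yields the claimed bound, and hence $(w^{\frac1q},w^{\frac1p})\in A^\alpha_{p,q}(Q)$.

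I do not expect a genuine obstacle here. The only points requiring care are the bookkeeping of exponents, which makes the power of $|R|$ collapse exactly because $\alpha=(u-1)(\frac1p-\frac1q)$, and the endpoint cases $p=1$ and $u=1$.
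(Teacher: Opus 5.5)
Your proposal is correct and takes essentially the same route as the paper. You split off the $A_p$ factor, use $\alpha=(u-1)(\frac1p-\frac1q)$ to collapse the remainder into $(|R|^u/w(R))^{\frac1p-\frac1q}$, and then apply $(|R|/|Q|)^u\le[w]_{A_u}\,w(R)/w(Q)$; the only difference is that you prove this comparison directly via H\"older (with the $A_1$ case handled by hand), whereas the paper cites it from Grafakos.
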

	\begin{proof}
    By \cite[(7.2.1)]{Gra14} we know that for any cube $R \subseteq Q$  we have
	\begin{equation*}
		\has{\frac{\abs{R}}{\abs{Q}}}^u \leq [w]_{A_u}{\frac{w(R)}{w(Q)}}.
	\end{equation*}
    Therefore
		\begin{align*}
			\abs{R}^\alpha \ip{w^{\frac1q}}_{q,R}\ip{w^{-\frac1p}}_{p',R}&=\abs{R}^\alpha \ip{w}_{1,R}^{\frac1q}\ip{w^{-1}}_{\frac{1}{p-1},R}^\frac{1}{p}\\&\leq [w]_{A_p}^{\frac{1}{p}} \cdot \frac{\abs{R}^{\frac{u}p-\frac{u}q}}{w(R)^{\frac1p-\frac1q}}\leq [w]_{A_p}^{\frac{1}{p}} [w]_{A_u}^{\frac{1}{p}-\frac1q} \cdot \has{\frac{\abs{Q}^u}{w(Q)}}^{\frac1p-\frac1q}.
		\end{align*}
		Taking the supremum over all cubes $R\subseteq Q$ finishes the proof.
	\end{proof}

\subsection{Sparse operators}
For the critical case of our inequalities, we will need boundedness of \textit{sparse operators}. A collection $\mc{S}$ of cubes is called sparse if for all $Q \in \mc{S}$ there exists an $E_Q\subseteq Q$ such that $|E_Q|\geq \tfrac{1}{2}|Q|$ and the $E_Q$'s are pairwise disjoint.

	For a sparse collection of cubes $\mc{S}$, $r \in (0,\infty)$ and $\beta\in [0,1)$, we define the (fractional) sparse operator
	\[
	\mc{A}_{\mc{S}}^{r,\beta} f (x) := \has{\sum_{Q\in\mc{S}} \hab{\abs{Q}^{\beta} \ip{\abs{f}}_{Q}}^r\ind_Q(x)}^{\frac1r}, \qquad x \in \R^d.	
	\]
In the following proposition we collect the weighted estimates that we will need on $\mc{A}_{\mc{S}}^{r,\beta}$.

	\begin{proposition}\label{prop:weaksparse}
		Let $1\leq p\leq q<\infty$, $r \in (0,\infty)$, $0\leq \alpha < \frac1{q}+\frac1{p'}$ and define  $\beta := \alpha+ \tfrac{1}{p}-\tfrac1q< 1.$ 
 Let $Q \subseteq \R^d$ be a cube and let $\mc{S}\subseteq \mc{D}(Q)$ be a sparse collection of cubes.
		\begin{enumerate}[(i)]
			\item\label{it:strong} \emph{(Strong-type estimate)} Take $(\omega,\sigma) \in A_{p,q}^\alpha(Q)$ with  $\sigma^{-p'} \in A_{\infty}(Q)$. Then we have for $1<p\leq r$
$$			{\nrm{\mc{A}_{\mc{S}}^{r,\beta}}_{L^p_\sigma(Q) \to L^{q}_\omega(Q)}} \lesssim   {[\omega,\sigma]_{A_{p,q}^\alpha(Q)} } \cdot [\sigma^{-p'}]_{A_\infty(Q)}^{\frac{1}{q}},
$$
and if, in addition, $\omega^q \in A_{\infty}(Q)$, we have for $p>\max\cbrace{1,r}$
$$		{\nrm{\mc{A}_{\mc{S}}^{r,\beta}}_{L^p_\sigma(Q) \to L^{q}_\omega(Q)}}\lesssim  {[\omega,\sigma]_{A_{p,q}^\alpha(Q)} } \cdot 
\begin{cases}
				[\omega^q]_{A_\infty(Q)}^{\frac1r-\frac{1}{p}} + [\sigma^{-p'}]_{A_\infty(Q)}^{\frac{1}{q}}\quad & p<q \text{ or } \beta =0,\vspace{3pt}\\
				[\omega^q]_{A_\infty(Q)}^{\frac1r-\frac{1}{p}} \hspace{3pt}\cdot\hspace{3pt} [\sigma^{-p'}]_{A_\infty(Q)}^{\frac{1}{q}} & p=q \text{ and } \beta > 0.
			\end{cases}
            $$
			\item\label{it:weak} \emph{(Weak-type estimate)} Take $(\omega,\sigma) \in A_{p,q}^\alpha(Q)$ with  $\omega^q \in A_{\infty}(Q)$. Then we have
			$$
			{\nrm{\mc{A}_{\mc{S}}^{r,\beta}}_{L^p_\sigma(Q) \to L^{q,\infty}_\omega(Q)}}\lesssim {[\omega,\sigma]_{A_{p,q}^\alpha(Q)}} \cdot \begin{cases}
				[\omega^q]_{A_\infty(Q)}^{\frac{1}{p'}} \quad & 1=r<p \text{ and } (p<q \text{ or }  \beta =0),\\ \vspace{3pt}[\omega^q]_{A_\infty(Q)}^{\frac1r} \quad & \text{otherwise}.
			\end{cases}
			$$
		\end{enumerate}
        The implicit constants depend on $d$ and on $p,q,r,\alpha$, but are uniform whenever these parameters are bounded away from their endpoints.
	\end{proposition}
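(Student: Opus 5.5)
Throughout, set $v:=\omega^q$, $u:=\sigma^{-p'}$ and $f=g\sigma^{-1}\cdot \sigma^{p'}$-type substitutions. Concretely, writing $|f| = h\,u$ with $\nrm{f}_{L^p_\sigma(Q)}=\nrm{h}_{L^p(Q,u)}$, the operator becomes $\mc{A}_{\mc{S}}^{r,\beta}(hu)$. The claim is then a two-weight estimate from $L^p(u)$ to $L^q(v)$, respectively to $L^{q,\infty}(v)$.

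In every case the cube-wise factor is controlled by the $A_{p,q}^\alpha(Q)$-characteristic via the identity
\[
\abs{R}^{\beta}\ip{u}_R=\abs{R}^{\alpha}\ip{\omega}_{q,R}\ip{\sigma^{-1}}_{p',R}\cdot \frac{\abs{R}^{\frac1p-\frac1q}\ip{u}_R^{1/p}}{\ip{v}_R^{1/q}}
\le [\omega,\sigma]_{A^\alpha_{p,q}(Q)}\,\frac{u(R)^{1/p}}{v(R)^{1/q}},
\]
which follows from $\ip{u}_R=\ip{\sigma^{-1}}_{p',R}^{p'}$ and $\frac1{p'}+\frac1p=1$.

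For the \textbf{weak-type estimate} I would use duality for $L^{q,\infty}(v)$ in the form of a level set $E=\{\mc{A}>\lambda\}$ with $v(E)<\infty$. When $r\le 1$, I would estimate $\mc{A}^r\le$ the linear sparse operator applied to $(\cdot)^r$. In general I would test against $\ind_{E'}$ for a major subset $E'$ with $\frac{q}{r}$-powers, i.e. bound
\[
\lambda^r v(E)\le\int_E \mc{A}^r v=\sum_R (\abs{R}^\beta\ip{hu}_R)^r v(R\cap E).
\]
Then I would use the displayed identity together with Hölder over the sparse family, and the $A_\infty$ property of $v$ in the form of the Carleson bound $\sum_{R\subseteq S} v(R)\lesssim [v]_{A_\infty}v(S)$. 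Summing the principal-cube (stopping) decomposition of $h$ with respect to $u$ yields the factor $[v]_{A_\infty}^{1/r}$. In the case $r=1<p$ one instead dualizes linearly and applies the sharp weak-type argument (Hytönen–Pérez style), yielding $[v]_{A_\infty}^{1/p'}$. The restriction $p<q$ or $\beta=0$ enters when summing the geometric series in the exponent $\frac1p-\frac1q$ across scales; when $p=q$ and $\beta>0$ this series fails and one loses the product structure.

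For the \textbf{strong-type estimate} with $p\le r$, I would use $\ell^r\subseteq\ell^p$ to reduce to $r=p$ and then apply duality with $L^{q'}(v)$ when $q\geq p$. Using $\mc{A}^{p}$ and a testing/principal-cube argument, the $A_\infty$ property of $u$ yields the Carleson packing $\sum_{R\subseteq S}u(R)\lesssim[u]_{A_\infty}u(S)$, and hence the factor $[u]_{A_\infty}^{1/q}$. For $p>r$ I would dualize $\mc{A}^r$ in $L^{q/r}(v)$ against $g\in L^{(q/r)'}(v)$. The sum then splits into two one-sided Carleson-embedding pieces: one packs $u$, giving $[u]_{A_\infty}^{1/q}$, and the other packs $v$, giving $[v]_{A_\infty}^{1/r-1/p}$. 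These appear as a sum when scales decouple ($p<q$ or $\beta=0$) and as a product otherwise, following the Lacey–Sawyer–Uriarte-Tuero/Hytönen–Lacey mixed $A_\infty$ bounds.

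The main obstacle is the $p=q$, $\beta>0$ endpoint and getting the exact exponents on both $A_\infty$ constants. There I would simply run the two Carleson embeddings consecutively rather than in parallel.
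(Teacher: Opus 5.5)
Your cube-wise identity is correct. The gap is the general weak-type bound in (ii), the ``otherwise'' case with $[\omega^q]_{A_\infty(Q)}^{1/r}$. This is exactly what the critical cases of Theorems~\ref{theorem:IleqIII} and~\ref{theorem:IleqII} use for $p=1$ and for $p=q$, $\beta>0$, and your mechanism does not establish it.

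\begin{itemize}
\item[(1)] Separating $h$ from the weights by H\"older over $\mathcal{S}$ leads to sums like $\sum_{R\in\mathcal{S}}u(R)(\langle h\rangle^u_R)^p$. These are controlled by $\|h\|^p_{L^p(u)}$ only through the Carleson embedding for $u=\sigma^{-p'}$, i.e.\ the hypothesis $\sigma^{-p'}\in A_\infty(Q)$, which (ii) does not assume. Principal cubes of $h$ with respect to $u$ do not remove this, since inside each principal cube you still sum $u$-dependent quantities over the sparse subcubes.
\item[(2)] The linear packing $\sum_{R\subseteq S}v(R)\lesssim[v]_{A_\infty}v(S)$ does not give the exponent $\frac1r$ when used in the natural way. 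Layering the cubes according to $|R|^\beta\langle hu\rangle_R\sim 2^{-k}\lambda$ produces the series $\sum_k 2^{k(q-r)}$. This diverges for $r\le q$ and gives only $[v]_{A_\infty}^{1/q}$ for $r>q$.
\item[(3)] The substitution $|f|=hu$ with $u=\sigma^{-p'}$ is meaningless for $p=1$, where (ii) is also claimed.
\end{itemize}

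The missing idea is to spend all the $A_\infty$ information on $\omega^q$ in a comparison of the $\ell^r$-sparse sum with the $\ell^\infty$ one. By \cite[Theorem 4.2]{NSS24},
\[
\|\mathcal{A}^{r,\beta}_{\mathcal{S}}f\|_{L^{q,\infty}_\omega(Q)}\lesssim[\omega^q]_{A_\infty(Q)}^{1/r}\Bigl\|\sup_{R\in\mathcal{S}}|R|^\beta\langle|f|\rangle_R\mathbf{1}_R\Bigr\|_{L^{q,\infty}_\omega(Q)}.
\]
The right-hand side needs neither sparseness nor any $A_\infty$ condition:
\begin{itemize}
\item[(a)] take the maximal $R_j\in\mathcal{S}$ with $|R_j|^\beta\langle|f|\rangle_{R_j}>\lambda$;
\item[(b)] apply H\"older on each $R_j$;
\item[(c)] use $|R_j|^{\beta-1}\omega^q(R_j)^{1/q}\sigma^{-p'}(R_j)^{1/p'}\le[\omega,\sigma]_{A^\alpha_{p,q}(Q)}$, read appropriately for $p=1$;
\item[(d)] conclude by disjointness of the $R_j$ and $\ell^p\subseteq\ell^q$.
\end{itemize}
This covers $p=1$ verbatim.

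Smaller points:
\begin{itemize}
\item[(1)] For $r\le 1$ Jensen goes the other way, $\langle|f|\rangle_R^r\ge\langle|f|^r\rangle_R$. So $\mathcal{A}^{r,\beta}_{\mathcal{S}}f$ is not dominated by the linear sparse operator applied to $|f|^r$.
\item[(2)] The restriction ``$p<q$ or $\beta=0$'' cannot come from a geometric series in $\frac1p-\frac1q$, since $\beta=0$ forces $p=q$. In the paper it is inherited from the dual testing estimate \cite[Theorem 4.3]{FH18}, fed into \cite[Theorem 1.8]{LSU09}.
\item[(3)] On the strong side, your consecutive treatment of $p=q>r$, $\beta>0$ is the paper's route. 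One compares $\mathcal{A}^{r,\beta}_{\mathcal{S}}$ with $\mathcal{A}^{p,\beta}_{\mathcal{S}}$ in $L^p_\omega(Q)$ at cost $[\omega^q]_{A_\infty(Q)}^{1/r-1/p}$, then uses the $r=p$ bound. The remaining strong cases come from \cite[Theorem 1.1]{FH18}.
\item[(4)] The straightforward duality argument you outline for $p\le r$ has the wrong exponent. Pairing $\sum_R a_R^p\mathbf{1}_R$ with $g\in L^{(q/p)'}(v)$ and using the Carleson embedding for $u$ gives only $[u]_{A_\infty}^{1/p}$, not the claimed $[u]_{A_\infty}^{1/q}$ when $p<q$. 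Reaching $\frac1q$ requires the off-diagonal analysis of \cite{FH18}.
\end{itemize}
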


	\begin{proof}[Proof of Proposition \ref{prop:weaksparse}]
The strong-type estimate in \ref{it:strong} for the cases 
\begin{itemize}
    \item $p<q$ 
    \item $p=q\leq r$
    \item $\beta =0$
\end{itemize}
follows from \cite[Theorem 1.1]{FH18} using $\alpha = 1-\beta$, noting that $\omega^q \in A_\infty(Q)$ is not used in the proof when $p \leq r$.
For the case $p=q>r$ and $\beta>0$, we note that by \cite[Theorem 4.2]{NSS24} we have for $f \in L^p_\sigma(Q)$ that
		$$
		\nrmb{\mc{A}_{\mc{S}}^{r,\beta}f}_{L^{p}_\omega(Q)} \lesssim_{p,r} [\omega^q]_{A_\infty(Q)}^{\frac1r-\frac{1}{p}} \nrmb{\mc{A}_{\mc{S}}^{p,\beta}f}_{L^{p}_\omega(Q)}.
		$$
Therefore, this case also follows from 
\cite[Theorem 1.1]{FH18} using $r=p$ and $\alpha = 1-\beta$.

For the weak-type estimate in \ref{it:weak}, we start with a proof that works for any $p,q,r,\beta$. Take $f \in L^p_\sigma(Q)$. By \cite[Theorem 4.2]{NSS24} we have
		$$
		\nrm{\mc{A}_{\mc{S}}^{r,\beta}f}_{L^{q,\infty}_\omega(Q)} \lesssim [\omega^q]_{A_\infty(Q)}^{\frac1r} \nrmb{\,\sup_{R \in \mc{S}}\, {{\abs{R}^\beta \ip{f}_{1,R}}} \ind_R}_{L^{q,\infty}_\omega(Q)}.
		$$
        We can easily estimate the right-hand side, without using the sparsity of $\mc{S}$.
		Fix $\lambda>0$ and let $\cbrace{R_j}_j$ be the collection of maximal cubes $R \in \mc{S}$ such that $\abs{R}^\beta \ip{f}_{1,R}>\lambda$. Then we have by H\"older's inequality
		\begin{align*}
			\lambda^q \cdot \omega^q\has{\cbraces{\,\sup_{R \in \mc{S}} \abs{R}^\beta \ip{f}_{1,R} \ind_R >\lambda}} &= \lambda^q \sum_j \omega^q(R_j)\\&\leq \sum_j \frac{1}{\abs{R_j}^{(1-\beta) q}} \has{\int_{R_j} \abs{f}}^q \cdot  \omega^q(R_j)\\
			&\leq \sum_j \has{ \frac{\omega^q(R_j)^{\frac{1}{q}} \sigma^{-p'}(R_j)^{\frac{1}{p'}}}{\abs{R_j}^{1-\beta}} }^q \has{\int_{R_j}\abs{f}^p\sigma^p}^{\frac{q}{p}} \\
			&\leq  [\omega,\sigma]_{A_{p,q}^\alpha(Q)}^q \nrm{f}_{L^p_\sigma(Q)}^q,
		\end{align*}
        with the usual modifications if $p'=\infty$.
		Since $\lambda>0$ was arbitrary, this finishes the proof.
        
If $1=r<p$ and  either $p<q$ or $\beta = 0$, we can give a sharper estimate in terms of $[\omega^q]_{A_\infty(Q)}$. Indeed, by \cite[Theorem 1.8]{LSU09} it suffices to estimate
$$
\mc{T}^* := \sup_{R \in \mc{S}}\omega^q(R)^{-1/q'}
\nrms{\sum_{S \in \mc{S}:S \subseteq R}
\frac{\omega^q(S)}{\abs{S}^{1-\beta}} \ind_S}_{L^{p'}_{\sigma^{-1}}(Q)},
$$
which is done in \cite[Theorem 4.3]{FH18} using $\alpha = 1-\beta$. 
	\end{proof}
\begin{remark}
There are various other cases in Proposition \ref{prop:weaksparse}\ref{it:weak} for which sharper estimates in terms of $[\omega^q]_{A_\infty(Q)}$ are available. For example, when $1<p=q<r$ and $\beta = 0$, the result holds even without the assumption $\omega^q \in A_\infty(Q)$,  see \cite[Theorem 1.2]{HL18}. A logarithmic correction is known in case $p=q=1$ and $\beta=0$, see \cite[Theorem C]{NS24}. However, these cases play no role in the rest of this article, and therefore we chose to omit them from Proposition \ref{prop:weaksparse}\ref{it:weak}. 

To improve our main results, we would need a sharper estimate for the case $p=q$ and $\beta > 0$ in Proposition \ref{prop:weaksparse}\ref{it:weak}, which seems unavailable. Indeed, for example the proof of \cite[Theorem 1.2]{HL18} for $p<r$ does not extend to the fractional case  $\beta > 0$. 
We note that an improvement in this case was claimed in  \cite[Theorem 1]{HY20}. However, in the proof the collection $\mc{S}$ is sparsified further such that for $R\in \mc{S}$
$$
\sum_{S\in \mc{S}:S\subseteq R} \abs{S}^{1-\frac{\alpha}{d}} \leq \tfrac14 \,\abs{R}^{1-\frac{\alpha}{d}},
$$
which is possible if and only if $\alpha =0$.
\end{remark}

We will also use the following well-known corollary of Proposition \ref{prop:weaksparse}\ref{it:strong} for the local fractional maximal operator
\[
	M_{Q}^{\beta} f (x) := \sup_{R\in\mc{D}(Q)} {\abs{R}^{\beta} \ip{\abs{f}}_{R}}\ind_R(x),  \qquad x \in \R^d.	
	\]

\begin{corollary}\label{cor:strongmax}
		Let $1< p\leq q<\infty$, $0\leq \alpha < \frac1{q}+\frac1{p'}$ and define $\beta := \alpha+ \tfrac{1}{p}-\tfrac1q< 1.$ 
 Let $Q \subseteq \R^d$ be a cube and take $(\omega,\sigma) \in A_{p,q}^\alpha(Q)$ with  $\sigma^{-p'} \in A_{\infty}(Q)$. Then we have
 $$			\nrm{M_{Q}^{\beta}}_{L^p_\sigma(Q) \to L^{q}_\omega(Q)} \lesssim  [\omega,\sigma]_{A_{p,q}^\alpha(Q)}  [\sigma^{-p'}]_{A_\infty(Q)}^{\frac{1}{q}} 
			$$
            The implicit constant depends on $d$ and on $p,q,\alpha$, but is uniform whenever these parameters are bounded away from their endpoints.
	\end{corollary}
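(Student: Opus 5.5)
The plan is to dominate $M_Q^\beta$ pointwise by a sparse operator of the form $\mc{A}_{\mc{S}}^{r,\beta}$ and then apply Proposition~\ref{prop:weaksparse}\ref{it:strong}. Since $M_Q^\beta f\le M_Q^\beta|f|$ and everything is monotone in $|f|$, we may assume $f\ge 0$. We may also assume $f$ is bounded with $\ip{f}_Q>0$; the general case then follows by monotone convergence.

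The sparse family is built by the standard stopping time. Let $\mc{S}_0=\{Q\}$. Given $R\in\mc{S}_k$, let $\mc{S}_{k+1}$ consist of the maximal dyadic cubes $R'\subsetneq R$, $R'\in\mc{D}(R)$, with
\[
\abs{R'}^\beta\ip{f}_{R'}>2^{d+1}\abs{R}^\beta\ip{f}_R .
\]
Since $\beta\ge 0$ and $R'\subseteq R$, this condition forces $\ip{f}_{R'}>2^{d+1}\ip{f}_R$. Hence, by the weak-type $(1,1)$ bound for the dyadic maximal operator (the maximal cubes are disjoint),
\[
\sum_{R'}\abs{R'}\le \frac{1}{2^{d+1}\ip{f}_R}\int_R f\le \tfrac12\abs{R}.
\]
Therefore $E_R:=R\setminus\bigcup R'$ satisfies $\abs{E_R}\ge\tfrac12\abs{R}$. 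These sets are pairwise disjoint, so $\mc{S}=\bigcup_k\mc{S}_k$ is sparse.

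Next comes the pointwise domination. Fix $x\in Q$ and a dyadic $P\ni x$, and let $R\in\mc{S}$ be the minimal stopping cube containing $P$. If $P$ were contained in a child stopping cube $R'$ of $R$, minimality would be contradicted. So $P$ does not exceed the stopping threshold relative to $R$; more precisely, $P$ is not inside any selected cube, which by maximality gives
\[
\abs{P}^\beta\ip{f}_P\le 2^{d+1}\abs{R}^\beta\ip{f}_R .
\]
Taking the supremum over $P\ni x$ yields
\[
M_Q^\beta f(x)\le 2^{d+1}\sup_{R\in\mc{S}}\abs{R}^\beta\ip{f}_R\ind_R(x)\le 2^{d+1}\mc{A}_{\mc{S}}^{r,\beta}f(x)
\]
for every $r\in(0,\infty)$, because a supremum is dominated by an $\ell^r$ sum.

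To conclude, choose $r:=p\in(1,\infty)$, so that $1<p\le r$ and the first estimate of Proposition~\ref{prop:weaksparse}\ref{it:strong} applies. It gives
\[
\nrm{\mc{A}^{p,\beta}_{\mc{S}}}_{L^p_\sigma(Q)\to L^q_\omega(Q)}\lesssim[\omega,\sigma]_{A_{p,q}^\alpha(Q)}[\sigma^{-p'}]_{A_\infty(Q)}^{1/q},
\]
using only $\sigma^{-p'}\in A_\infty(Q)$ and not $\omega^q\in A_\infty(Q)$. The implicit constant depends on $d,p,q,\alpha$ and is uniform away from the endpoints, because $r=p$ ties $r$ to $p$. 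The only point needing care is the verification of the domination step, in particular that the threshold comparison works despite the factor $\abs{R}^\beta$; this is handled by $\beta\ge0$ together with monotonicity of $\abs{\cdot}^\beta$. The rest is a direct citation.
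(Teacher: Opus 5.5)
Your proposal is correct and follows the paper's proof. The paper likewise dominates $M_Q^\beta f$ by $\sup_{R\in\mc{S}}\abs{R}^\beta\ip{\abs{f}}_R\ind_R$ for a sparse $\mc{S}\subseteq\mc{D}(Q)$ obtained from the standard stopping-cube argument, bounds this by $\mc{A}^{r,\beta}_{\mc{S}}f$, and applies Proposition~\ref{prop:weaksparse}\ref{it:strong} with $r=p$; you simply write out the stopping-time construction and the domination that the paper leaves as standard.
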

    \begin{proof}
        For $f \in L^p_\sigma(Q)$ there exists a sparse collection of cubes $\mc{S}\subseteq \mc{D}(Q)$ such that for a.e. $x \in \R^d$
\[
	M_{Q}^{\beta} f (x) \lesssim_d \sup_{R\in\mc{S}} {\abs{R}^{\beta} \ip{\abs{f}}_{R}}\ind_R(x),  \qquad x \in \R^d,
	\]
which follows from the standard stopping cube argument. Since 
$$
\sup_{R\in\mc{S}} {\abs{R}^{\beta} \ip{\abs{f}}_{R}}\ind_R(x) \leq \mc{A}_{\mc{S}}^{r,\beta}f(x),
$$
for any $r\geq 1$, the claim follows from Proposition \ref{prop:weaksparse}\ref{it:strong} using $r=p$.    
    \end{proof}

    \section{Domination principles}\label{sec:dom}
    A key tool in proving our two-weighted Poincar\'e--Sobolev sandwich is the domination of the oscillation of a function by averages of this oscillation over cubes, allowing us to apply classical Poincar\'e-type inequalities. For the non-critical case of our Poincar\'e--Sobolev inequalities, we will use the following domination principle: For a cube $Q \subseteq \R^d$ and $f\in L^1(Q)$, we have
	\begin{align}
		|f(x)-\ip{f}_{Q}| \lesssim_d \sum_{R\in \mc{D}(Q)}\ip{|f-\ip{f}_R|}_R \ind_R(x),\qquad x\in Q. \label{eq:basicdom}
	\end{align}
	Indeed, let $x\in Q$ and for $j\geq 0$ denote by $Q_j$ the cube in $\mc{D}(Q)$ with $\ell(Q_j)=2^{-j}\ell(Q)$ and $x\in Q_j$. Since
	\begin{align*}
		\absb{\ip{f}_{Q_{j+1}}-\ip{f}_{Q_j}} &\leq \frac{1}{|Q_{j+1}|}\int_{Q_{j+1}}\absb{f(y) - \ip{f}_{Q_j}} \dd y \\
		&\lesssim_{d} \frac{1}{|Q_{j}|}\int_{Q_{j}}\absb{f(y) - \ip{f}_{Q_j}} \dd y = \ip{|f-\ipb{f}_{Q_j}|}_{Q_j},
	\end{align*}
	we have by the triangle inequality,
	\[
		|f(x)-\ip{f}_{Q}| \leq |f(x)-\ip{f}_{Q_n}| + \sum_{j=0}^{n-1}  \absb{ \ip{f}_{Q_{j+1}}-\ip{f}_{Q_j}} \lesssim_d |f(x)-\ip{f}_{Q_n}| + \sum_{j=0}^{n-1}  \ip{|f-\ip{f}_{Q_j}|}_{Q_j}.
	\]
	Now \eqref{eq:basicdom} follows by taking $n\to \infty$ and using the Lebesgue differentiation theorem.

	For the critical cases, we need something similar using a sparse collection of cubes, which is the content of the next lemma.
	\begin{lemma}[{\cite[Lemma 3.1.2]{Hy21}}]\label{lem:sparsedom}
		Let $Q\subseteq \R^d$ be a cube and $f\in L^1(Q)$. There exists a sparse collection of cubes $\mc{S}\subseteq \mc{D}(Q)$ such that
		\begin{align*}
			|f(x)-\ip{f}_{Q}| \lesssim_d \sum_{R\in \mc{S}}\ip{|f-\ip{f}_R|}_R \ind_R(x),\qquad x\in Q.
		\end{align*}
	\end{lemma}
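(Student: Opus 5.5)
We would prove Lemma~\ref{lem:sparsedom} by a stopping-time refinement of the telescoping argument behind \eqref{eq:basicdom}. Instead of keeping every dyadic cube, we keep only those where the averages of $|f-\ip{f}_Q|$ jump. The jumps are controlled by the $L^1$-oscillation, and the weak $(1,1)$ bound for the dyadic maximal operator makes the selected cubes sparse.

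The construction runs as follows. Set $\mc{S}_0:=\cbrace{Q}$. Given $R\in\mc{S}_k$, let $\operatorname{ch}_{\mc{S}}(R)$ be the maximal cubes $R'\in\mc{D}(R)$ satisfying
\[
\ip{|f-\ip{f}_R|}_{R'}>C_d\ip{|f-\ip{f}_R|}_R .
\]
We then set $\mc{S}_{k+1}:=\bigcup_{R\in\mc{S}_k}\operatorname{ch}_{\mc{S}}(R)$ and $\mc{S}:=\bigcup_k\mc{S}_k$.

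\emph{Sparseness.} By the weak $(1,1)$ bound for the dyadic maximal operator (a direct Calder\'on--Zygmund count: the cubes $R'$ are disjoint and each carries more than $C_d\ip{|f-\ip{f}_R|}_R|R'|$ of the integral), we get
\[
\sum_{R'\in\operatorname{ch}_{\mc{S}}(R)}|R'|\leq C_d^{-1}|R| .
\]
Taking $C_d=2$ gives $|E_R|\geq\frac12|R|$ with $E_R:=R\setminus\bigcup\operatorname{ch}_{\mc{S}}(R)$, and these sets are pairwise disjoint.

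\emph{Domination.} Fix $x\in Q$ and let $Q=R_0\supsetneq R_1\supsetneq\dots$ be the cubes of $\mc{S}$ containing $x$. If the chain is finite with last cube $R_m$, then $x\in E_{R_m}$. By maximality, every dyadic cube $P\ni x$ inside $R_m$ satisfies
\[
\ip{|f-\ip{f}_{R_m}|}_P\leq 2\ip{|f-\ip{f}_{R_m}|}_{R_m},
\]
so Lebesgue differentiation yields, for a.e.\ such $x$,
\[
|f(x)-\ip{f}_{R_m}|\leq 2\ip{|f-\ip{f}_{R_m}|}_{R_m}.
\]
The key step, and the one we expect to be the obstacle, is bounding the jumps $|\ip{f}_{R_{k+1}}-\ip{f}_{R_k}|$. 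Let $\hat R_{k+1}$ be the dyadic parent of $R_{k+1}$. It lies in $R_k$ and was not selected, since $R_{k+1}$ is maximal. Hence
\[
|\ip{f}_{R_{k+1}}-\ip{f}_{R_k}|\leq\ip{|f-\ip{f}_{R_k}|}_{R_{k+1}}\leq 2^d\ip{|f-\ip{f}_{R_k}|}_{\hat R_{k+1}}\leq 2^{d+1}\ip{|f-\ip{f}_{R_k}|}_{R_k}.
\]
Telescoping gives
\[
|f(x)-\ip{f}_Q|\leq\sum_{k\leq m}\bigl|\ip{f}_{R_{k+1}}-\ip{f}_{R_k}\bigr|+|f(x)-\ip{f}_{R_m}|\lesssim_d\sum_{R\in\mc{S}}\ip{|f-\ip{f}_R|}_R\ind_R(x).
\]
The set of points lying in infinitely many cubes of $\mc{S}$ has measure zero, because
\[
\bigl|\textstyle\bigcup\mc{S}_k\bigr|\leq 2^{-k}|Q|\to0 .
\]
So the estimate holds for a.e.\ $x$, which is the sense in which the lemma is stated.
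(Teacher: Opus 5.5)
Your proposal is correct and is essentially the standard stopping-time proof of \cite[Lemma 3.1.2]{Hy21}, which the paper cites rather than reproves. The same argument appears essentially verbatim inside the paper's proof of Theorem~\ref{thm:fracsparsedom}: the stopping condition \eqref{eq:sparscond2}, the dyadic-parent trick to bound $|\ip{f}_S-\ip{f}_R|$, and the a.e.\ bound on $E_R$, there with threshold $4$ because two stopping conditions are run simultaneously. The only blemish is cosmetic: in your telescoping sum the index should run over $k<m$, not $k\leq m$.
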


For our third inequality \eqref{eq:introBC}, i.e. the  two-weight Sobolev to Triebel–Lizorkin embedding, we need to dominate the expression appearing in the Triebel--Lizorkin difference norm. We again start with a simple result for the subcritical case. For a cube $R$ and $\gamma>0$, we define $\gamma R$ as the cube with the same center as $R$ and $\ell(\gamma R)=\gamma \ell (R)$. Whenever we integrate a function $g$ defined on a cube $Q$ over a region outside of $Q$, we use its $2\ell(Q)$-periodic extension obtained by even reflection across each face of $Q$\footnote{To be precise, if $Q=\prod_{j=1}^d[a_j,a_j+\ell(Q)]$, define for any point $x\in \R^d$ the point $x^*\in Q$ as follows. For each $j=1,\ldots,d$, let $y_j\in[0,2\ell(Q))$ such that $x_j=a_j+y_j \mod 2\ell(Q)$ and define $x_j^*=a_j+\ell(Q)- |y_j-\ell(Q)|$. Then the extension of $g$ is defined as $\tilde{g}(x)=g(x^*)$.}.
\begin{lemma}\label{lemma:Tldomsubcrit}
		Let $Q \subseteq \R^d$ be a cube, $r \in [1,\infty)$, $s \in (0,1)$ and $f \in F^{s}_{1,r}(Q)$. Then we have for  a.e. $x \in Q$
		\begin{align*}
			\has{\int_Q \frac{|f(x)-f(y)|^r}{|x-y|^{d+sr}}\dd y}^{1/r} &\lesssim_d
    \has{\sum_{R\in \mathcal{D}({Q})} \ell(R)^{-sr}{|f(x)-\ip{f}_{R}|^r}\ind_R(x)}^{1/r},\\
      &\hspace{1cm}+ \has{ \sum_{R\in \mathcal{D}({Q})}\ell(R)^{-sr}   \ipb{|f-\ip{f}_{3R}|}_{r,3R}^r \ind_R(x)}^{1/r}.
\end{align*}
\end{lemma}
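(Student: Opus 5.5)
Fix $x\in Q$ (a Lebesgue point of $f$, and of $|f|^r$) and, for $j\ge 0$, let $Q_j\in\mc{D}(Q)$ be the dyadic cube with $\ell(Q_j)=2^{-j}\ell(Q)$ containing $x$. The plan is to decompose the $y$-integral over $Q$ into dyadic annuli around $x$ adapted to the cubes $Q_j$, and then to insert the average $\ip{f}_{3Q_j}$ on each annulus.

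\textbf{Step 1: annular decomposition.} Set $A_j:=\{y\in Q: 2^{-j-1}\ell(Q)<|x-y|\le 2^{-j}\ell(Q)\}$ for $j\ge 0$, together with $A_{-1}:=\{y\in Q:|x-y|>\ell(Q)\}$, which has diameter bounded by $\sqrt d\,\ell(Q)$. For $y\in A_j$ we have $|x-y|\eqsim 2^{-j}\ell(Q)=\ell(Q_j)$, and, since $x\in Q_j$ and $|x-y|\le\ell(Q_j)$, the point $y$ lies in $3Q_j$. The region $A_{-1}$ lies in $Q\subseteq\sqrt d\,$-dilate of $Q_0$. To avoid this dilation, I would enlarge the annuli slightly: cover $|x-y|\in(\ell(Q_j),\sqrt d\,\ell(Q_j)]$ by the finitely many (depending on $d$) generations $Q_{j-k}$, so that every $y\in Q$ with $|x-y|\eqsim\ell(Q_j)$ is assigned to some $3Q_{j'}$ with $\ell(Q_{j'})\eqsim_d|x-y|$. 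The bounded overlap means each cube is used $O_d(1)$ times. Hence
\begin{align*}
\int_Q \frac{|f(x)-f(y)|^r}{|x-y|^{d+sr}}\dd y \lesssim_d \sum_{j\ge0}\ell(Q_j)^{-d-sr}\int_{3Q_j\cap Q}|f(x)-f(y)|^r\dd y.
\end{align*}
Here we may even integrate the reflected extension over all of $3Q_j$, since the integrand is nonnegative; the $2\ell(Q)$-periodic reflection is exactly what makes $\ip{f}_{3R}$ meaningful when $3R\not\subseteq Q$.

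\textbf{Step 2: inserting the average.} Use $|f(x)-f(y)|^r\le 2^{r-1}\bigl(|f(x)-\ip{f}_{3Q_j}|^r+|f(y)-\ip{f}_{3Q_j}|^r\bigr)$ and $|3Q_j|\eqsim_d\ell(Q_j)^d$. The second piece gives exactly
\begin{align*}
\ell(Q_j)^{-sr}\ipb{|f-\ip{f}_{3Q_j}|}_{r,3Q_j}^r,
\end{align*}
which is the term appearing in the second sum of the lemma. The first piece gives $\ell(Q_j)^{-sr}|f(x)-\ip{f}_{3Q_j}|^r$, but the lemma has $\ip{f}_{Q_j}$ rather than $\ip{f}_{3Q_j}$. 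We fix this by noting
\begin{align*}
|\ip{f}_{Q_j}-\ip{f}_{3Q_j}|\le 3^d\ip{|f-\ip{f}_{3Q_j}|}_{3Q_j}\le 3^d\ipb{|f-\ip{f}_{3Q_j}|}_{r,3Q_j},
\end{align*}
so that $|f(x)-\ip{f}_{3Q_j}|^r\lesssim_{d,r}|f(x)-\ip{f}_{Q_j}|^r+\ipb{|f-\ip{f}_{3Q_j}|}_{r,3Q_j}^r$. Summing over $j$ and rewriting the sum over $j$ as a sum over $R\in\mc{D}(Q)$ with $\ind_R(x)$, then taking $r$-th roots (the constants of the form $2^{r}$ disappear up to a factor depending on $d$ after the root), yields the claim.

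\textbf{Main obstacle.} The delicate point is the bookkeeping near the boundary of $Q$ and for $y$ far from $x$ at the coarsest scales. One needs every $y\in Q$ to lie in $3Q_j$ for some $j$ with $\ell(Q_j)\eqsim_d|x-y|$. Since $|x-y|\le\sqrt d\,\ell(Q)$, the coarsest scale can only be handled via $Q_0=Q$ with a $d$-dependent loss: $3Q$ contains $Q$, and all $y$ with $|x-y|\ge\ell(Q_1)$ may be dumped into $3Q_0$ or $3Q_1$ at a cost of $\lesssim_d 1$. I would check that the constant stays independent of $r$ and $s$ after the root, using $(2^{r-1})^{1/r}\le2$. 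It is also necessary that $x$ be a Lebesgue point so that all the quantities involved are finite, which holds for a.e.\ $x$.
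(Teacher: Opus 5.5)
Your proof is correct and follows essentially the same route as the paper: decompose the $y$-integral into dyadic annuli $A_j\subseteq 3Q_j$ around $x$, split $|f(x)-f(y)|$ through an intermediate average, and use $|\ip{f}_{Q_j}-\ip{f}_{3Q_j}|\le 3^d\ipb{|f-\ip{f}_{3Q_j}|}_{r,3Q_j}$. The differences are cosmetic. You insert $\ip{f}_{3Q_j}$ and correct the first term, whereas the paper inserts $\ip{f}_{Q_j}$ and corrects the second. Your extra covering in Step 1 is unnecessary, since the paper simply puts all $y$ with $|x-y|>\ell(Q)/2$ into $A_0\subseteq Q\subseteq 3Q_0$.
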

\begin{proof}
	Fix $x\in Q$. Let $\cbrace{Q_j}_{j=0}^\infty$ be the sequence of dyadic cubes in $\mc{D}(Q)$ such that $\ell(Q_j) = 2^{-j}\ell(Q)$ and $x \in Q_j$ for all $j\geq 0$. Furthermore, define the annuli
    \begin{align*}
    A_0&:= \cbraceb{y \in Q: 2^{-1} \ell(Q)<|x-y|},\\
        A_j&:= \cbraceb{y \in Q: 2^{-j-1} \ell(Q)<|x-y|\leq 2^{-j}\ell(Q)}, \qquad &&j\geq 1,
    \end{align*}
    and note that $Q = \bigcup_{j=0}^\infty A_j$ up to null sets. Hence, we can decompose 
	\begin{align*}
		\has{\int_{Q} \frac{|f(x)-f(y)|^r}{|x-y|^{d+sr}} \dd y}^{1/r} &=\has{\sum_{j=0}^\infty \int_{A_j} \frac{|f(x)-f(y)|^r}{|x-y|^{d+sr}}\dd y}^{1/r}   \\
        &\lesssim_d  \has{\sum_{j=0}^\infty \ell(Q_j)^{-d-sr} \int_{A_j} {|f(x)-\ip{f}_{Q_j}|^r}\dd y}^{1/r} \\
        &\hspace{1cm} + \has{\sum_{j=0}^\infty \ell(Q_j)^{-d-sr}  \int_{A_j} {|f(y)-\ip{f}_{Q_j}|^r}\dd y}^{1/r} \\ 
         &\lesssim_d  \has{\sum_{j=0}^\infty \ell(Q_j)^{-sr}{|f(x)-\ip{f}_{Q_j}|^r}}^{1/r} \\
        &\hspace{1cm} + \has{\sum_{j=0}^\infty \ell(Q_j)^{-d-sr}  \int_{3 {Q_j}} {|f(y)-\ip{f}_{Q_j}|^r}\dd y}^{1/r}\\
        &=  \has{\sum_{j=0}^\infty \sum_{\substack{R\in \mathcal{D}({Q}):\\ \ell(R)=2^{-j}\ell({Q})}} \mathbf{1}_R(x)\ell(R)^{-sr}{|f(x)-\ip{f}_{R}|^r}}^{1/r} \\
        &\hspace{1cm} + \has{\sum_{j=0}^\infty \sum_{\substack{R\in \mathcal{D}({Q}):\\ \ell(R)=2^{-j}\ell({Q})}}\mathbf{1}_R(x)\ell(R)^{-d-sr}  \int_{3 {R}} {|f(y)-\ip{f}_{R}|^r}\dd y}^{1/r}.
	\end{align*}
    To finish the proof, note that for each $R$ we have by H\"older,
    \begin{align*}
             \has{\int_{3 {R}} {|f(y)-\ip{f}_{R}|^r}\dd y}^{1/r} &\leq  \has{\int_{3 {R}} {|f(y)-\ip{f}_{3R}|^r}\dd y}^{1/r} \\
         &\hspace{1cm}+\has{\int_{3R} {|\ip{f}_{R}-\ip{f}_{3R}|^r}\dd y}^{1/r}\\&\lesssim_d \has{\int_{3R} {|f(y)-\ip{f}_{3R}|^r}\dd y}^{1/r}.  \qedhere
    \end{align*}
\end{proof}

    For the critical case, we will again need a sparse version of this domination result, which is new and interesting in its own right.
	\begin{theorem}\label{thm:fracsparsedom}
Let $Q \subseteq \R^d$ be a cube, $r \in [1,\infty)$, $s \in (0,1)$ and $f \in F^{s}_{1,r}(Q)$. For $R \subseteq Q$ define         
        $$
f_{R}^{s,r}(x):=\has{\int_R \frac{|f(x)-f(y)|^r}{|x-y|^{d+sr}}\dd y}^{1/r}, \qquad x \in R.
$$
Then there exists a sparse collection $\mathcal{S}\subseteq \mathcal{D}(Q)$ such that for $x \in Q$
		\begin{align*}
			f_{Q}^{s,r}(x) \lesssim_d \sum_{R\in\mathcal{S}} \has{ \ipb{f_{3R}^{s,r}}_R  + \frac1{s^{1+1/r}\cdot \ell(R)^s}\cdot {\ipb{|f-\ipb{f}_R|}_R} }\mathbf{1}_R(x) .
		\end{align*}
	\end{theorem}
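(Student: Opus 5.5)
We run a stopping-time construction on $\mc{D}(Q)$, modelled on the proof of Lemma~\ref{lem:sparsedom}, but with a stopping condition that controls two quantities at once: the oscillation of $f$ and the local fractional energy. Start with $Q\in\mc{S}$. Given a selected cube $R$, consider the two local maximal functions
\[
M_R\hab{f_{3R}^{s,r}\ind_R}\qquad\text{and}\qquad M_R\hab{(f-\ip{f}_R)\ind_R},
\]
where $M_R$ is the dyadic maximal operator on $\mc{D}(R)$. Let the children of $R$ in $\mc{S}$ be the maximal dyadic subcubes $P\subsetneq R$ on which either
\[
\ip{f_{3R}^{s,r}}_P> C\ip{f_{3R}^{s,r}}_R \qquad\text{or}\qquad \ip{|f-\ip{f}_R|}_P>C\ip{|f-\ip{f}_R|}_R .
\]
By the weak $(1,1)$ bound for $M_R$, with $C=C_d$ large these cubes cover at most $\frac12|R|$. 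Iterating gives a sparse family, with $E_R$ equal to $R$ minus the union of its children.

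Fix $x\in Q$ and let $Q=R_0\supsetneq R_1\supsetneq\dots$ be the chain of cubes of $\mc{S}$ containing $x$; by Lebesgue differentiation the chain stops for a.e.\ $x$, or is infinite. Split $Q$ into the disjoint pieces $R_k\setminus R_{k+1}$ and write
\[
f_Q^{s,r}(x)^r=\sum_k\int_{R_k\setminus R_{k+1}}\frac{|f(x)-f(y)|^r}{|x-y|^{d+sr}}\dd y .
\]
Since $(\sum_k a_k^r)^{1/r}\le\sum_k a_k$, it suffices to bound each piece by the $R_k$-term of the claimed sum.

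To estimate $\int_{R_k\setminus R_{k+1}}$, decompose $R_k\setminus R_{k+1}$ dyadically. Let $R_k=P_0\supsetneq P_1\supsetneq\dots\supsetneq P_m=R_{k+1}$ be the dyadic cubes containing $x$ between $R_k$ and $R_{k+1}$, and consider $y\in P_j\setminus P_{j+1}$. For $x$ and $y$ with $|x-y|$ small relative to $\ell(P_j)$, split
\[
|f(x)-f(y)|\le|f(x)-f(z)|+|f(z)-f(y)|
\]
and average over $z$ in a cube $P'$ that is a neighbour or child at scale $\ell(P_j)$. This bounds the piece by
\[
\ell(P_j)^{-s}\hab{\ip{f_{3P_j}^{s,r}}_{P_j}+\text{oscillation terms}} .
\]
For $j<m$, $P_j$ is not a stopping cube, so these averages are at most $C$ times the corresponding $R_k$-averages. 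The part of the integral over $y$ far from $x$, meaning $|x-y|\gtrsim\ell(P_j)$, is estimated by
\[
\ell(P_j)^{-s}\hab{|f(x)-\ip{f}_{P_j}|+\ip{|f-\ip{f}_{P_j}|}_{P_j}} ,
\]
which is controlled through the non-stopping condition. Summing the geometric series $\sum_j(2^{-j}\ell(R_k))^{-s}$ runs in the wrong direction, since it grows as cubes shrink. The cleaner approach is therefore to compare directly with the average $\ip{f_{3R_k}^{s,r}}$ using Minkowski.

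Concretely, for $y\in R_k\setminus R_{k+1}$ write
\[
|f(x)-f(y)|\le|f(x)-f(z)|+|f(z)-f(y)| \qquad\text{for } z\in B\bigl(x,\tfrac12|x-y|\bigr)\cap R_k ,
\]
and average over $z$. The second term gives $\int|f(z)-f(y)|^r|z-y|^{-d-sr}$, which after integrating in $z$ is bounded by the $3R_k$ energy averaged over the cubes $P_j$, hence by $\ip{f_{3R_k}^{s,r}}_{R_k}$ via the stopping rule. The first term, $|f(x)-f(z)|$, is the real difficulty, and is where the oscillation term with constant $s^{-1-1/r}$ enters. I would handle it by replacing $f(x)$ with $\ip{f}_{P_j}$, again using the telescoping estimate~\eqref{eq:basicdom} restricted to the non-stopping range. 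Integrating $|x-y|^{-d-sr}$ over $|x-y|\ge\ell(R_{k+1})$ contributes a factor $s^{-1}\ell(R_{k+1})^{-sr}$; then converting $\ell(R_{k+1})$ back to $\ell(R_k)$ through the sparse chain and the power $1/r$ yields the $s^{-1/r}$ and $s^{-1}$ factors.

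\textbf{Main obstacle.} I expect the main obstacle to be the pointwise term $|f(x)-\ip{f}_{P}|$ for $x$ deep inside $R_{k+1}$, together with getting exactly the $s^{-1-1/r}$ dependence. My first attempt would be to absorb this term into the next level of the chain: the difference $|\ip{f}_{R_{k+1}}-\ip{f}_{R_k}|\lesssim\ip{|f-\ip{f}_{R_k}|}_{R_k}$ by the stopping rule. Combined with $\sum_j 2^{-js}\lesssim s^{-1}$, this produces the required factor.
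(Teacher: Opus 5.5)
Your stopping rule and sparsity argument match the paper's. Your account of where $s^{-1-1/r}$ comes from is also right: $s^{-1/r}$ from integrating $|x-y|^{-d-sr}$ over $|x-y|\geq \ell(S)$, and $s^{-1}$ from telescoping $|f(x)-\ip{f}_{S}|$ down the chain against $\sum_j 2^{-js}$.

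The gap is in the core reduction. You split $Q$ into the pieces $R_k\setminus R_{k+1}$ and claim each piece is bounded by the $R_k$-term alone, but that claim is false. For $x\in R_{k+1}$ near $\partial R_{k+1}$ and $y\in R_k\setminus R_{k+1}$ near $x$, $|x-y|$ is not bounded below by $\ell(R_{k+1})$. Your step ``integrating $|x-y|^{-d-sr}$ over $|x-y|\geq \ell(R_{k+1})$'' silently assumes it is. Consequently the term $|f(x)-\ip{f}_{R_{k+1}}|$ comes with $\mathrm{dist}(x,\partial R_{k+1})^{-s}$ rather than $\ell(R_{k+1})^{-s}$.

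Concretely, add to $f$ a smooth bump of height $1$ and width $\delta$ centred at a point of $\partial R_{k+1}$ lying inside $R_k$. Then the piece is $\gtrsim \delta^{-s}$ for $x\in R_{k+1}$ near the centre of the bump. For small $\delta$ this generically leaves $R_0,\dots,R_{k+1}$ unchanged, and it alters the two averages in the $R_k$-term only by $o(1)$.

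Your averaging over $z\in B(x,\tfrac12|x-y|)$ does not fix this, for two reasons. First, these are averages over small balls around $x$ at scales below $\ell(R_{k+1})$, where the stopping rule gives no information; it controls only dyadic averages over cubes strictly containing $R_{k+1}$. Second, the ball varies with $y$, so Minkowski does not collapse the result into a single average.

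The missing idea is to pass near-boundary interactions to the next level; this is exactly why the statement uses $f^{s,r}_{3R}$. Start from $f^{s,r}_Q\leq f^{s,r}_{3Q}$. For $x\in S$, with $S$ a stopping child of $R$, use
\[
f^{s,r}_{3R}(x)\leq f^{s,r}_{3S}(x)+\has{\int_{3R\setminus 3S}\frac{|f(x)-f(y)|^r}{|x-y|^{d+sr}}\dd y}^{1/r}.
\]
On $3R\setminus 3S$ one now has $|x-y|\geq \ell(S)$. Split $|f(x)-f(y)|\leq |f(x)-\ip{f}_S|+|\ip{f}_S-f(y)|$:
\begin{itemize}
\item The first part gives $s^{-1/r}\ell(S)^{-s}|f(x)-\ip{f}_S|$.
\item For the second part, average over $z$ in the fixed cube $S$ and use $|x-y|\gtrsim_d |z-y|$, valid since $x,z\in S$ and $y\notin 3S$. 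Minkowski then gives a bound $\lesssim_d \ip{f^{s,r}_{3R}}_{\widehat S}\leq 4\ip{f^{s,r}_{3R}}_R$, by maximality of $S$.
\end{itemize}
All $y\in 3S$, in particular those just across $\partial S$, are carried by $f^{s,r}_{3S}(x)$ to the next level. The recursion ends at the last stopping cube $R_n\ni x$: there $x\in E_{R_n}$, and Lebesgue differentiation gives $f^{s,r}_{3R_n}(x)\leq 4\ip{f^{s,r}_{3R_n}}_{R_n}$. Your telescoping of the $|f(x)-\ip{f}_{R_k}|$ terms then finishes the proof.
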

	\begin{proof}
		We will construct $\mathcal{S}$ iteratively. Set $\mathcal{S}_0=\{{Q}\}$ and suppose that $\mathcal{S}_n$ has been constructed. Fix $R\in\mathcal{S}_n$ and define $\mathcal{S}_R\subseteq \mathcal{D}(R)$ to be the set of maximal cubes $S \in \mc{D}(R)$ such that at least one of the following conditions is true
		\begin{align}
			\ipb{f_{3R}^{s,r}}_S &> 4 \,\ipb{f_{3R}^{s,r}}_R  ,\label{eq:sparscond1}\\
			\ipb{|f-\ipb{f}_R|}_S &> 4 \,\ipb{|f-\ipb{f}_R|}_R \label{eq:sparscond2}.
		\end{align}
		Define $\mathcal{S}_{n+1} = \bigcup_{R\in \mathcal{S}_{n}}\mathcal{S}_R$ and set $\mathcal{S}=\bigcup_{n=0}^\infty \mathcal{S}_n$.
We claim that $\mc{S}$ is sparse. Indeed, for any $R \in \mc{S}$ use
        $$
E_R:= R\setminus \bigcup_{S\in \mathcal{S}_R} S,
        $$
        which are pairwise disjoint by construction. Moreover, denoting the set of all $S\in\mathcal{S}_R$ that satisfy \eqref{eq:sparscond1} and \eqref{eq:sparscond2} by $\mathcal{S}_R^1$ and $\mathcal{S}_R^2$ respectively, we get from \eqref{eq:sparscond1} that for $S\in \mathcal{S}_R^1$,
		\begin{align*}
			|S| \leq |R| \frac{\int_{S}f_{3R}^{s,r}(x)\dx}{4\int_{R}f_{3R}^{s,r}(x)\dx},
		\end{align*}
		and consequently
		\begin{align*}
			\sum_{S\in \mathcal{S}_R^1} |S| \leq \tfrac14 |R|.
		\end{align*}
		Similarly, we get
		\begin{align*}
			\sum_{S\in \mathcal{S}_R^2} |S| \leq \tfrac14 |R|,
		\end{align*}
		and therefore
		\begin{align*}
			\abs{E_R} = \abss{R\setminus \bigcup_{S\in \mathcal{S}_R} S} \geq \abs{R} - \sum_{S\in \mathcal{S}_R^1} |S| - \sum_{S\in \mathcal{S}_R^2} |S| \geq \tfrac12 |R|,
		\end{align*}
		 so $\mathcal{S}$ is indeed sparse.

        \medskip

We want to show that for all $R\in \mathcal{S}$ and  a.e. $x \in R$ we have
		\begin{align}
        \begin{aligned}
        f_{3R}^{s,r}(x)\mathbf{1}_R(x) \leq c_d \cdot  \ip{f_{3R}^{s,r}}_R \mathbf{1}_R(x)  &+c_d \cdot {s^{-\frac1r}} \cdot \sum_{S\in \mathcal{S}_R}\ell(S)^{-s} |f(x)-\ip{f}_S|\mathbf{1}_S(x)\\
            &+\sum_{S\in \mathcal{S}_R} f_{3S}^{s,r}(x)\mathbf{1}_S(x)
        \end{aligned}
			 \label{eq:sparsestimate1}
		\end{align}
		and
		\begin{align}
			|f(x)-\ip{f}_R|\mathbf{1}_R(x) &\leq  c_d \cdot \ipb{|f-\ip{f}_R|}_R\mathbf{1}_R(x)+\sum_{S\in \mathcal{S}_R}|f(x)-\ip{f}_S|\mathbf{1}_S(x) , \label{eq:sparsestimate2}
		\end{align}
		where $c_d$ is some dimensional constant. This would imply the statement of this theorem. Indeed, define 
		\[
		N_0 = \bigcap_{n=0}^\infty \bigcup_{R\in\mathcal{S}_n} R = Q \setminus \bigcup_{R \in \mc{S}} E_R.
		\]
Then we have by sparsity
		\[
		|N_0| = \lim\limits_{n\to\infty} \absb{ \bigcup_{R\in\mathcal{S}_n} R}\leq \lim\limits_{n\to\infty}(\tfrac12)^n |{Q}| = 0.
		\]
Now, for every $R \in \mc{S}$ let $N_R\subseteq R$ be a set such that $|N_R|=0$ and  \eqref{eq:sparsestimate1} and \eqref{eq:sparsestimate2} hold for all $x \in R \setminus N_R$. Set $$N:= N_0\cup \bigcup_{R \in \mc{S}}N_R,$$ which is a set of measure zero. Fix $x\in {Q}\setminus {N}$, then by construction there exists an integer $n$ and cubes $S_j\in\mathcal{S}_j$  such that $x\in S_j$ for $j=0,\ldots,n$ and $x\in E_{S_n}$. Since $x \notin S$ for all $S \in \mathcal{S}_{n+1}$, applying \eqref{eq:sparsestimate1} $(n+1)$-times gives
		\begin{align*}
			f_{3{Q}}^{s,r}(x) \lesssim_d \sum_{j=0}^n  \ipb{f_{3S_j}^{s,r}}_{S_j} + {s^{-\frac1r}}\cdot  \sum_{j=1}^n  \ell(S_j)^{-s}\abs{f(x)-\ip{f}_{S_j}}.
		\end{align*}
		Then, for each $j$ applying \eqref{eq:sparsestimate2} $(n+1-j)$-times gives
		\[
		\absb{f(x)-\ip{f}_{S_j}} \lesssim  \sum_{m=j}^{n}\ipb{\absb{f-\ip{f}_{S_m}}}_{S_m}.
		\]
Combining these  estimates gives
		\begin{align*}
			f_Q^{s,r}(x)\leq f_{3{Q}}^{s,r}(x)&\lesssim_d \sum_{j=0}^n  \ip{f_{3S_j}^{s,r}}_{S_j} + {s^{-\frac1r}} \cdot \sum_{j=1}^n \ell(S_j)^{-s} \sum_{m=j}^{n}\ipb{\absb{f-\ip{f}_{S_m}}}_{S_m}\\
            &\lesssim_d \sum_{j=0}^n  \ip{f_{3S_j}^{s,r}}_{S_j} + \beta\cdot{s^{-\frac1r}} \cdot\sum_{m=1}^n  \ell(S_m)^{-s}\ipb{\absb{f-\ip{f}_{S_m}}}_{S_m},
		\end{align*}
		where
		\[
		\beta= \sum_{j=1}^m \frac{\ell(S_m)^{s}}{\ell(S_j)^s} \leq \sum_{j=1}^m 2^{(j-m)s} \lesssim \frac{1}{s},
		\]
		proving the theorem.

        \medskip
		
		It remains to prove \eqref{eq:sparsestimate1} and \eqref{eq:sparsestimate2}, for which we fix $R\in\mathcal{S}$. By the Lebesgue differentiation theorem, we have for  a.e. $x \in E_R$ that 
		\begin{align*}
			f_{3R}^{s,r}(x) &\leq 4 \,\ipb{f_{3R}^{s,r}}_R. \\
			|f(x)-\ipb{f}_R| & \leq  4 \,\ipb{\abs{f-\ipb{f}_R}}_R,
		\end{align*}
so \eqref{eq:sparsestimate1} and \eqref{eq:sparsestimate2} hold.    
		If $x \in  R \setminus E_R$, then $x \in S$ for some $S\in \mathcal{S}_R$. Then we have 
		\begin{align*}
			|f(x)-\ip{f}_R| - |f(x)-\ip{f}_S| &\leq  |\ip{f}_S-\ip{f}_R| \\
			&\leq  \frac{1}{|S|} \int_S |f(y)-\ipb{f}_R|\dd y \\
			&\lesssim_d \frac{1}{\absb{\widehat{S}}} \int_{\widehat{S}} |f(y)-\ipb{f}_R|\dd y\\&\leq 
            4\cdot \ipb{|f-\ip{f}_R|}_R
		\end{align*}
		where $\widehat{S}$ is the dyadic parent of $S$ (i.e. $\widehat{S}\in \mc{D}(Q)$ such that $\ell(\widehat{S})=2\ell(S)$ and $S\subseteq \widehat{S}$) and the final step follows by the maximality of $S$ satisfying \eqref{eq:sparscond2}. This proves \eqref{eq:sparsestimate2}. For \eqref{eq:sparsestimate1}  we estimate
		\begin{align*}
			f^{s,r}_{3R}(x)-f^{s,r}_{3S}(x) &\leq \has{\int_{3R\backslash 3S} \frac{|f(x)-f(y)|^r}{|x-y|^{d+sr}}\dd y }^{1/r} \\
			&\leq \has{\int_{3R\backslash 3S} \frac{|f(x)-\ip{f}_S|^r}{|x-y|^{d+sr}}\dd y }^{1/r} + \has{\int_{3R\backslash 3S} \frac{|\ip{f}_S-f(y)|^r}{|x-y|^{d+sr}}\dd y }^{1/r}.
		\end{align*}
		For the first term, note that $|x-y|\geq \ell(S)$, so that
		\begin{align*}
		    \has{\int_{3R\backslash 3S} \frac{|f(x)-\ip{f}_S|^r}{|x-y|^{d+sr}}\dd y }^{1/r}  &\lesssim_d  \has{\int_{\abs{x-y}\geq \ell(S)} \frac{1}{|x-y|^{d+sr}}\dd y }^{1/r} \cdot |f(x)-\ip{f}_S|\\ 
            &\lesssim_d {s^{-\frac1r}} \cdot \ell(S)^{-s}|f(x)-\ip{f}_S|.
		\end{align*}
		For the second term, we have
		\[
		\frac{|\ip{f}_S-f(y)|}{|x-y|^{d/r+s}} \leq \frac{1}{|S|}\int_S \frac{|f(z)-f(y)|}{|x-y|^{d/r+s}} \dd z \lesssim_d \frac{1}{|S|}\int_S \frac{|f(z)-f(y)|}{|z-y|^{d/r+s}} \dd z,
		\]
		where we used that $|x-y| \geq \tfrac1{2\sqrt{d}} |y-z|$ since $x,z\in S$ and $y\in 3R\setminus 3S$. Therefore, we have 
		\begin{align*}
			\left(\int_{3R\backslash 3S} \frac{|\ip{f}_S-f(y)|^r}{|x-y|^{d+sr}}\dd y \right)^{1/r} &\lesssim_d \frac{1}{|S|}\left(\int_{3R}\left(\int_S \frac{|f(z)-f(y)|}{|z-y|^{d/r+s}}\dd z\right)^r \dd y \right)^{1/r} \\
			&\leq \frac{1}{|S|}\int_S\left(\int_{3R} \frac{|f(z)-f(y)|^r}{|z-y|^{d+sr}}\dd y\right)^{1/r} \dd z \\
			&\lesssim_d \ipb{f^{s,r}_{3R}}_{\widehat{S}}\leq 4 \ipb{f^{s,r}_{3R}}_{R},
		\end{align*}
		where $\widehat{S}$ is again the dyadic parent of $S$ and we used the maximality of $S$ satisfying \eqref{eq:sparscond1} in the final step. This proves \eqref{eq:sparsestimate1} and thus finishes the proof.
	\end{proof}

\section{Two-weight Poincar\'e--Sobolev inequality} \label{sec:AC}
We start by studying a two-weight version of the classical Poincar\'e--Sobolev inequality, i.e.
$$
\nrm{f-\ip{f}_{Q}}_{L^q_\omega(Q)} \lesssim \nrmb{\abs{\nabla f}}_{L^p_\sigma(Q)}.
$$
under appropriate conditions on $p,q,\sigma,\omega$. It is worth noting that there are several natural ways to normalize the oscillation on the left-hand side. Besides subtracting the Lebesgue average $\ip{f}_{Q}$, one could also subtract the weighted average
\[
\ip{f}_{Q}^{\omega^q}:=\frac{1}{\omega^q(Q)}\int_Q f\,\omega^q,
\]
or, more intrinsically, take the infimum over all constants $c\in \C$. Our formulation with the unweighted average $\ip{f}_{Q}$ is in general the strongest one. Indeed, we have
\[
\inf_{c\in \C}\, \nrmb{f-c}_{L^q_\omega(Q)} \leq \nrmb{f-\ip{f}_{Q}^{\omega^q}}_{L^q_\omega(Q)}
\leq 2 \,\nrm{f-\ip{f}_{Q}}_{L^q_\omega(Q)}.
\]
Conversely, in the one-weight Muckenhoupt setting these formulations are equivalent up to constants depending only on the weight characteristic. More precisely, if  $\omega^q \in A_q$, then for every $c\in \C$ one has
\[
|\ip{f}_{Q}-c|\,\omega^q(Q)^{1/q}
\lesssim [\omega^q]_{A_q}^{1/q}\nrm{f-c}_{L^q_\omega(Q)}
\]
and hence
\begin{equation}\label{eq:avgbelowinf}
\nrm{f-\ip{f}_{Q}}_{L^q_\omega(Q)}
\lesssim
[\omega^q]_{A_q}^{1/q}
\inf_{c\in \C}\, \nrm{f-c}_{L^q_\omega(Q)}.
\end{equation} Therefore, in the one-weight $A_p$-setting the three formulations are equivalent, whereas in the general two-weight setting our choice using $\ip{f}_{Q}$ yields the strongest results.

Our claims to novelty in this section are rather mild. Instead, we would like to emphasize that our proofs unify and significantly simplify the existing literature. For a comparison to the literature, see Subsection \ref{sec:ACliterature}.

\subsection{Main result}
Our main two-weight Poincar\'e--Sobolev inequality reads as follows.

	\begin{theorem}\label{theorem:IleqIII}
	Let $Q\subseteq \R^d$ be a cube, let $1\leq p\leq q <\infty$, $0\leq \alpha < \tfrac1q+\tfrac1{p'}$, $(\omega,\sigma)\in \Apq$ and assume
	\[
	\varepsilon:=\tfrac1d - (\tfrac1p - \tfrac1q)-\alpha \geq 0.
	\]
    Then we have the following assertions for $f \in W^{1,p}_\sigma(Q)$.
    \begin{enumerate}[(i)]
		\item\label{it:diagpqcase1} \emph{(Subcritical case)} If $\varepsilon>0$, we have
		\begin{align*}
			\nrm{f-\ip{f}_{Q}}_{L^q_\omega(Q)} \lesssim_d  [\omega,\sigma]_{A_{p,q}^\alpha(Q)}\cdot \frac{\abs{Q}^{\varepsilon}}{\varepsilon}  \nrmb{\abs{\nabla f}}_{L^p_\sigma(Q)}.
		\end{align*}
		\item\label{it:diagpqcase2}  \emph{(Critical case)} If $\varepsilon=0$ and we additionally assume  $\omega^q\in A_\infty(Q)$, then 
		\begin{align*}
			{\nrm{f-\ip{f}_{Q}}_{L^q_\omega(Q)}} \lesssim_{p,q,d}  \nrmb{\abs{\nabla f}}_{L^p_\sigma({Q})} \cdot [\omega,\sigma]_{A_{p,q}^\alpha({Q})}  \cdot \begin{cases}
				[\omega^q]_{A_\infty(Q)}^{\frac{1}{p'}}\qquad & 1<p<q,\\
				[\omega^q]_{A_\infty(Q)} & \text{otherwise}. 
			\end{cases}
   \end{align*}
	\end{enumerate}
	\end{theorem}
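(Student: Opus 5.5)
We follow the three-step scheme (a)--(c) from the introduction.

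\emph{Subcritical case.} Start from the basic domination \eqref{eq:basicdom} and apply the unweighted $(1,1)$-Poincar\'e inequality on each dyadic cube, $\ip{|f-\ip{f}_R|}_R\lesssim_d \ell(R)\ip{|\nabla f|}_R$. This gives
\[
|f(x)-\ip{f}_Q|\lesssim_d \sum_{R\in\mc{D}(Q)}\ell(R)\ip{|\nabla f|}_R\ind_R(x)
\]
for a.e.\ $x\in Q$. Take the $L^q_\omega(Q)$-norm and use Minkowski's inequality over the scales $\ell(R)=2^{-j}\ell(Q)$. At a fixed scale the cubes are disjoint, so we need to bound
\[
\Bigl(\sum_{R}\ell(R)^q\ip{|\nabla f|}_R^q\,\omega^q(R)\Bigr)^{1/q}.
\]
By H\"older,
\[
\ip{|\nabla f|}_R\le |R|^{-1}\nrm{\nabla f}_{L^p_\sigma(R)}\sigma^{-p'}(R)^{1/p'},
\]
so each term is at most
\[
\ell(R)\,|R|^{-1+\frac1q+\frac1{p'}}\,\ip{\omega}_{q,R}\ip{\sigma^{-1}}_{p',R}\,\nrm{\nabla f}_{L^p_\sigma(R)}\le [\omega,\sigma]_{A^\alpha_{p,q}(Q)}\,|R|^{\frac1d-\frac1p+\frac1q-\alpha}\,\nrm{\nabla f}_{L^p_\sigma(R)}.
\]
The exponent $\frac1d-\frac1p+\frac1q-\alpha$ equals $\varepsilon$. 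Since $q\ge p$, the $\ell^q$ sum over disjoint $R$ is at most the $\ell^p$ sum, which is at most $\nrm{\nabla f}_{L^p_\sigma(Q)}$. Summing the geometric series $\sum_j (2^{-jd}|Q|)^{\varepsilon}\lesssim |Q|^\varepsilon/(d\varepsilon)$ yields (i).

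\emph{Critical case.} Replace \eqref{eq:basicdom} by the sparse domination of Lemma~\ref{lem:sparsedom} and apply the same $(1,1)$-Poincar\'e inequality. This gives
\[
|f-\ip{f}_Q|\lesssim_d \sum_{R\in\mc{S}}\ell(R)\ip{|\nabla f|}_R\ind_R=\mc{A}^{1,1/d}_{\mc{S}}(|\nabla f|).
\]
Because $\varepsilon=0$, the fractional order is $1/d=\alpha+\frac1p-\frac1q=\beta$, exactly as in Proposition~\ref{prop:weaksparse}. For $1<p<q$ (and likewise when $p=q$ with $\beta=0$, though here $\beta=1/d>0$), Proposition~\ref{prop:weaksparse}\ref{it:weak} with $r=1<p$ gives only a weak-type bound $L^p_\sigma\to L^{q,\infty}_\omega$, with constant $[\omega,\sigma]\,[\omega^q]_{A_\infty}^{1/p'}$. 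In all other cases it gives the same weak-type bound with $[\omega^q]_{A_\infty}^{1}$.

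The main obstacle is upgrading this weak-type estimate to a strong one. For this we use the truncation method of Maz'ya, recalled in the appendix. Apply the weak-type bound to the truncations
\[
f_k=\min\bigl\{\max\{f-c,2^k\},2^{k+1}\bigr\}-2^k
\]
(and to the analogous ones for negative parts), where $c$ is a median-type constant. Their gradients $\nabla f\,\ind_{\{2^k<f-c\le 2^{k+1}\}}$ have disjoint supports, so summing over $k$ with $q\ge p$ recovers the strong $L^q_\omega$-norm of $f-c$. The remaining step is to pass from $c$ back to $\ip{f}_Q$ without loss. The truncation argument works naturally with $\inf_c$, or with $c$ chosen as a median with respect to $\omega^q$. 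Since our weak estimate is with the unweighted average, we first run it on the truncations, which have zero unweighted median on a large part of $Q$. We then control $|\ip{f}_Q-c|\,\omega^q(Q)^{1/q}$ by the weak-type bound applied to $f$ itself. Care is needed here to avoid introducing extra weight constants; we expect the weak-type inequality at level $\lambda\sim|\ip{f}_Q-c|$ to handle this, since $\{|f-\ip{f}_Q|>\lambda\}$ then occupies a fixed proportion of $\omega^q(Q)$.
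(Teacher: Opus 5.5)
Your subcritical argument is the paper's: \eqref{eq:basicdom}, the $(1,1)$-Poincar\'e inequality, H\"older with the $A^\alpha_{p,q}(Q)$ condition (which is \eqref{eq:avgcontrolAC}), and the scale-by-scale summation of Lemma~\ref{lem:QtoQ0}. In the critical case, the domination by $\mc{A}^{1,1/d}_{\mc{S}}(\abs{\nabla f})$ and the weak-type bound from Proposition~\ref{prop:weaksparse}\ref{it:weak}, with the correct powers of $[\omega^q]_{A_\infty(Q)}$, also coincide with the paper. The gap is in the weak-to-strong upgrade, exactly at the step you flag.

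\textbf{Where the upgrade breaks.} The weak estimate is stated for $u-\ip{u}_Q$. So your truncation step only closes if $\ip{f_k}_Q\le 2^{k-1}$, i.e.\ if $c$ is a median with respect to Lebesgue measure.

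\emph{Lebesgue median.} Then $\{|f-\ip{f}_Q|\ge|c-\ip{f}_Q|\}$ is only known to contain half of $Q$ in Lebesgue measure. Nothing makes it carry a fixed proportion of $\omega^q(Q)$. Converting one into the other needs a quantitative $A_\infty$ property of $\omega^q$, and that constant is not controlled by any power of $[\omega^q]_{A_\infty(Q)}$. For example, take $\omega^q(x)=|x|^\gamma$ on $Q=[-1,1]$. Since $\omega^q$ is monotone in $|x|$, one checks $[\omega^q]_{A_\infty(Q)}\le 2(\gamma+1)$. Yet $[-\tfrac12,\tfrac12]$ carries half of $|Q|$ and only the fraction $2^{-\gamma-1}$ of $\omega^q(Q)$.

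\emph{$\omega^q$-median.} Then your fixed-proportion claim is true. But the truncations no longer vanish on half of $Q$ in Lebesgue measure, and you say nothing about controlling $\ip{f_k}_Q$.

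\textbf{The missing ingredient.} Use the $L^1$ estimate \eqref{eq:avgcontrolAC} with $R=Q$ and $\varepsilon=0$:
\[
\omega^q(Q)^{1/q}\,\ip{|f-\ip{f}_Q|}_Q\lesssim_d[\omega,\sigma]_{A^\alpha_{p,q}(Q)}\,\nrmb{\abs{\nabla f}}_{L^p_\sigma(Q)}.
\]
This costs no $A_\infty$ factor. With a Lebesgue median you have $|c-\ip{f}_Q|\le 2\ip{|f-\ip{f}_Q|}_Q$, and this closes your argument.

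\textbf{How the paper does it.} Proposition~\ref{prop:weaktostrongclassic} avoids medians. It truncates $v=|f-\ip{f}_Q|$ at the levels $2^k\ip{v}_Q$. It uses $2v_t\le v$ to get $\ip{v_t}_Q\le\tfrac12\ip{v}_Q$. It then bounds the low part $2^q\ip{v}_Q^q\,\omega^q(Q)$ by the same $L^1$ estimate.

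\textbf{Alternative repair.} You could keep an $\omega^q$-median and apply your level-set idea to each $f_k$ as well. Test the weak bound for $u=f_k$ on the zero set of $f_k$, at level $\ip{f_k}_Q$. This gives $\ip{f_k}_Q(\tfrac12\omega^q(Q))^{1/q}\le C\nrm{\nabla f_k}_{L^p_\sigma(Q)}$. So either $\ip{f_k}_Q\le 2^{k-1}$, or $\omega^q(Q)\le 2\,(2^{1-k}C\nrm{\nabla f_k}_{L^p_\sigma(Q)})^q$. In both cases the $k$-th level set is controlled.
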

	Before we turn to the proof, we will need some preparations. First of all, we need the well-known $(1,1)$-Poincar\'e inequality on a cube, i.e. for a cube $Q\subseteq \R^d$ and $f \in W^{1,1}(Q)$ we have
	\begin{align}
		\nrm{f-\ip{f}_{Q}}_{L^1(Q)} \lesssim_d \ell(Q)\cdot \nrmb{\abs{\nabla f}}_{L^1(Q)}. \label{eq:11poincare}
	\end{align}
In fact, this inequality holds for any convex set, replacing $\ell(Q)$ by the diameter of the set.
    
	To prove the subcritical case, we will also need the following inequality. We formulate a slightly more general result, which we  will also  use  in Sections \ref{sec:AB} and \ref{sec:BC}.    
    
	\begin{lemma}\label{lem:QtoQ0}
		Let $Q\subseteq \R^d$ be a cube, $1\leq p \leq q <\infty$, $\gamma \in [1,\infty)$, $\varepsilon>0$ and let $\omega,\sigma$ be weights. For $g \in L^p_{\sigma}(Q)$ we have 
        \begin{align*}
            \has{\int_{Q} \has{\sum_{R \in \mc{D}(Q)} \frac{\abs{R}^{\varepsilon}}{\omega^q(R)^{1/q}} \nrm{g}_{L^p_\sigma(\gamma R)} \ind_R}^{q} \omega^q\dx}^{1/q} \lesssim_{\gamma,d} \frac{|Q|^\varepsilon}{\varepsilon} \nrm{g}_{L^p_\sigma(Q)}.
        \end{align*}
	\end{lemma}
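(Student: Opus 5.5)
Since $\varepsilon>0$ lets us sum over dyadic scales, the plan is to follow the prototype computation in the introduction. Split the sum according to side length: for $j\ge0$ let $\mc{D}_j(Q)$ be the cubes $R\in\mc{D}(Q)$ with $\ell(R)=2^{-j}\ell(Q)$. Minkowski's inequality in $L^q(\omega^q)$ then gives
\begin{align*}
\text{LHS} \leq \sum_{j=0}^\infty \has{\int_Q \has{\sum_{R\in\mc{D}_j(Q)} \frac{\abs{R}^{\varepsilon}}{\omega^q(R)^{1/q}} \nrm{g}_{L^p_\sigma(\gamma R)} \ind_R}^q\omega^q\dx}^{1/q}.
\end{align*}
For fixed $j$ the cubes in $\mc{D}_j(Q)$ are pairwise disjoint, so the $q$-th power of the inner sum splits cube by cube. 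On each cube $R$ the factor $\omega^q(R)$ coming from the integral cancels the normalization $\omega^q(R)^{-1}$. The $j$-th term therefore equals
\[
2^{-jd\varepsilon}\abs{Q}^{\varepsilon}\has{\sum_{R\in\mc{D}_j(Q)}\nrm{g}_{L^p_\sigma(\gamma R)}^q}^{1/q}.
\]
If $\omega^q(R)=0$ for some $R$, that term is simply zero, with the convention $0\cdot\infty=0$.

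Next we bound the $\ell^q$-sum. Because $p\le q$, we have $\ell^p\subseteq\ell^q$ with norm one, so
\[
\has{\sum_{R\in\mc{D}_j(Q)}\nrm{g}_{L^p_\sigma(\gamma R)}^q}^{1/q}\le \has{\sum_{R\in\mc{D}_j(Q)}\int_{\gamma R}\abs{g}^p\sigma^p}^{1/p}.
\]
The dilates $\gamma R$ with $R\in\mc{D}_j(Q)$ have bounded overlap: each point lies in at most $(\gamma+2)^d$ of them, a constant depending only on $\gamma$ and $d$.

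The main obstacle is that $\gamma R$ may stick out of $Q$. There $g$ is understood through the reflected periodic extension introduced before Lemma~\ref{lemma:Tldomsubcrit}, and $\sigma$ must be extended the same way. Reflection maps $\gamma R\setminus Q$ into $Q$ with multiplicity at most $C_{\gamma,d}$, because $\gamma R$ can meet only boundedly many reflected copies of $Q$ when $\gamma\ell(R)\lesssim_\gamma \ell(Q)$. Combining this with the bounded overlap gives
\[
\sum_{R\in\mc{D}_j(Q)}\int_{\gamma R}\abs{g}^p\sigma^p \lesssim_{\gamma,d}\nrm{g}^p_{L^p_\sigma(Q)},
\]
uniformly in $j$. (If instead one reads $\nrm{g}_{L^p_\sigma(\gamma R)}$ as the norm over $\gamma R\cap Q$, the bound follows directly from the bounded overlap.)

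Finally, summing the geometric series gives
\[
\sum_{j\ge0}2^{-jd\varepsilon}=\frac{1}{1-2^{-d\varepsilon}}\lesssim_d \frac{1}{\varepsilon}\quad\text{for }\varepsilon\le 1,
\]
and this is bounded by a constant otherwise. Since $\varepsilon$ is small in the applications, this is the $1/\varepsilon$ in the statement; for large $\varepsilon$ one can absorb the constant or note $1/(1-2^{-d\varepsilon})\le 1+1/(d\varepsilon\ln 2)$. Together these yield $\abs{Q}^\varepsilon\varepsilon^{-1}\nrm{g}_{L^p_\sigma(Q)}$ up to a constant depending on $\gamma$ and $d$.
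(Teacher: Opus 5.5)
Your proof is correct and is essentially the paper's argument: split by scale, use disjointness at a fixed scale so that $\omega^q(R)$ cancels, bound $\bigl(\sum_R \nrm{g}_{L^p_\sigma(\gamma R)}^q\bigr)^{1/q}$ via $\ell^p\hookrightarrow\ell^q$ and the bounded overlap of the dilates $\gamma R$, and sum the geometric series $\sum_j 2^{-jd\varepsilon}$. Your extra remarks on the reflected extension when $\gamma R\not\subseteq Q$, and on the size of $\varepsilon$, address points the paper passes over. One caveat: for large $\varepsilon$ nothing can be ``absorbed'', since the term $R=Q$ alone gives the lower bound $\abs{Q}^{\varepsilon}\nrm{g}_{L^p_\sigma(Q)}$ (when $\omega^q(Q)>0$). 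So the honest constant is $\max\{1,\varepsilon^{-1}\}$, which agrees with $\varepsilon^{-1}$ in every application because there $\varepsilon\le 1/d$.
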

	\begin{proof}
Note that the collection of all $R\in \mc{D}(Q)$ with the same side length is pairwise disjoint and the cubes $\gamma R$ have bounded overlap. Therefore, using $\ell^p\hookrightarrow \ell^q$ in the final step, we can estimate
		\begin{align*}
			\has{\int_{Q} &\has{\sum_{R \in \mc{D}(Q)} \frac{\abs{R}^{\varepsilon}}{\omega^q(R)^{1/q}} \nrm{g}_{L^p_\sigma(\gamma R)} \ind_R}^{q} \omega^q\dx}^{1/q}\\
			&\leq \abs{Q}^{\varepsilon}\sum_{j=0}^\infty 2^{-jd\varepsilon} \has{\int_{Q} \has{\sum_{\substack{R \in \mc{D}(Q):\\ \ell(R) = 2^{-j}\ell(Q)}}  \frac{1}{\omega^q(R)^{1/q}} \nrm{g}_{L^p_\sigma(\gamma R)} \ind_R}^q \omega^q\dx}^{1/q}\\
            &= \abs{Q}^{\varepsilon}\sum_{j=0}^\infty 2^{-jd\varepsilon} \has{\int_{Q} \sum_{\substack{R \in \mc{D}(Q):\\ \ell(R) = 2^{-j}\ell(Q)}}  \frac{1}{\omega^q(R)} \nrm{g}_{L^p_\sigma(\gamma R)}^q \ind_R \omega^q\dx}^{1/q}\\
            &= \abs{Q}^{\varepsilon}\sum_{j=0}^\infty 2^{-jd\varepsilon} \has{ \sum_{\substack{R \in \mc{D}(Q):\\ \ell(R) = 2^{-j}\ell(Q)}}  \nrm{g}_{L^p_\sigma(\gamma R)}^q}^{1/q} \\
            &\lesssim_\gamma \abs{Q}^{\varepsilon} \cdot \nrm{g}_{L^p_\sigma(Q)} \cdot \sum_{j=0}^\infty 2^{-jd\varepsilon}.
		\end{align*}
Since
		$
		\sum_{j=0}^\infty 2^{-jd\varepsilon} = \frac{1}{1-2^{-d\varepsilon}} \lesssim \frac{1}{\varepsilon},
		$
		 the lemma follows.
	\end{proof}

	Now we are ready to prove the main theorem of this section.
	\begin{proof}[Proof of Theorem \ref{theorem:IleqIII}]
    Let $f \in W^{1,p}_\sigma(Q)$. For both cases \ref{it:diagpqcase1} and \ref{it:diagpqcase2} we will use the $(1,1)$-Poincar\'e inequality \eqref{eq:11poincare}. This requires $f \in W^{1,1}(Q)$, which follows from H\"older's inequality as $[\omega,\sigma]_{\Apq}<\infty$ and therefore $\sigma^{-1} \in L^{p'}(Q)$. 
    
	For \ref{it:diagpqcase1}, let $R \in \mc{D}(Q)$. By the $(1,1)$-Poincar\'e inequality \eqref{eq:11poincare} and H\"older's inequality, we have
	\begin{align}
    \begin{split}
		\frac{1}{\abs{R}} \int_R \abs{f-\ip{f}_R} \dx &\lesssim_d \frac{\ell(R)}{\abs{R}} \int_R \abs{\nabla f}\dx
		\\&\leq \frac{1}{\abs{R}^{1-\frac1d}} \has{\int_R \abs{\nabla f}^p  \sigma^p\dx}^{1/p} \has{\int_R \sigma^{-p'}\dx}^{1/p'}\\
		&\leq [\omega,\sigma]_{A_{p,q}^\alpha(Q)} \frac{\abs{R}^{\varepsilon}}{\omega^q(R)^{1/q}} \nrmb{\abs{\nabla f}}_{L^p_\sigma(R)}.\end{split}\label{eq:avgcontrolAC}
	\end{align}
    Using the domination principle from \eqref{eq:basicdom}, we obtain
	\begin{align*}
		\has{\int_{Q}& \abs{f-\ip{f}_{Q}}^{q} \omega^q\dx}^{1/q} \\
		&\leq\has{\int_{Q} \has{\sum_{R \in \mc{D}({Q})} \ip{\abs{f- \ip{f}_R}}_{R} \ind_R}^{q} \omega^q\dx}^{1/q}\\
		&\lesssim_d [\omega,\sigma]_{A_{p,q}^\alpha({Q})}  \has{\int_{Q} \has{\sum_{R \in \mc{D}({Q})} \frac{\abs{R}^{\varepsilon}}{\omega^q(R)^{1/q}} \nrmb{\abs{\nabla f}}_{L^p_\sigma(R)} \ind_R}^{q} \omega^q\dx}^{1/q}.
    \end{align*} 
    The claimed estimate \ref{it:diagpqcase1} now follows from Lemma \ref{lem:QtoQ0}.

    \medskip
    
	For \ref{it:diagpqcase2}, we assume $\varepsilon=0$ and therefore
	$
		\frac1d =\alpha + \frac1p - \frac1q.
	$
	Let $\beta= \frac1d = \alpha + \frac1p - \frac1q \in (0,1)$ and note that by the $(1,1)$-Poincar\'e inequality \eqref{eq:11poincare} we have
	\begin{align*}
		\frac{1}{\abs{R}} \int_R \abs{f-\ip{f}_R} \dx &\lesssim_d \frac{\ell(R)}{\abs{R}} \int_R \abs{\nabla f}\dx = |R|^{\beta} \ipb{\abs{\nabla f}}_{R}.
	\end{align*}
	By Lemma \ref{lem:sparsedom}, there exists a sparse collection of cubes $\mc{S}\subseteq \mc{D}({Q})$ such that
	\begin{align*}
		\abs{f(x)-\ip{f}_{{Q}}} \lesssim_d \sum_{R \in \mc{S}} \ipb{\abs{f - \ip{f}_R}}_{R} \ind_R, \qquad x \in {Q}.
	\end{align*}
	Combining above two inequalities, we obtain for a.e. $x \in Q$
	\begin{align*}
		\abs{f(x)-\ip{f}_{Q}} &\lesssim_d \sum_{R \in \mc{S}} \ipb{\abs{f- \ip{f}_R}}_{R} \ind_R(x) \\
		&\lesssim_d  \sum_{R \in \mc{S}} |R|^{\beta} \ipb{\abs{\nabla f}}_{R} \ind_R(x) \\
		&= \mathcal{A}_\mathcal{S}^{1,\beta} (|\nabla f|)(x).
	\end{align*}
	Therefore, Proposition \ref{prop:weaksparse}\ref{it:weak}  yields
	\begin{align*}
		{\nrmb{f-\ip{f}_{Q} }_{L^{q,\infty}_\omega({Q})}} \lesssim_{p,q,d}  \nrmb{\abs{\nabla f}}_{L^p_\sigma({Q})} \cdot [\omega,\sigma]_{A_{p,q}^\alpha({Q})}  \cdot \begin{cases}
				[\omega^q]_{A_\infty(Q)}^{\frac{1}{p'}}\qquad & 1<p<q,\\
				[\omega^q]_{A_\infty(Q)} & \text{otherwise}. 
			\end{cases}
	\end{align*}
	The result now follows from the weak implies strong principle, see Proposition \ref{prop:weaktostrongclassic}, by using \eqref{eq:avgcontrolAC} with $|R|=|Q|$, $\varepsilon=0$ and noting that $[\omega^q]_{A_\infty(Q)}\geq 1$.
	\end{proof}

    \begin{remark}\label{rem:remafterAC}~
        \begin{enumerate}[(i)]
        \item \label{it:ACsimple} As already explained in the introduction, the proof of the subcritical case in \ref{theorem:IleqIII}\ref{it:diagpqcase1} is completely elementary, only using the following results:
    \begin{itemize}
        \item The $(1,1)$-Poincar\'e inequality \eqref{eq:11poincare}.
        \item The domination principle \eqref{eq:basicdom}, which is a direct consequence of the Lebesgue differentiation theorem.
        \item The dyadic summing principle in Lemma \ref{lem:QtoQ0}.
    \end{itemize}
            \item In the setting of Theorem \ref{theorem:IleqIII}\ref{it:diagpqcase2} with $p=q>1$, if we additionally assume $\sigma^{-p'}\in A_\infty(Q)$, we can also show that 
        \begin{align*}
			{\nrm{f-\ip{f}_{Q}}_{L^q_\omega(Q)}} \lesssim_{p,q,\alpha,d}  \nrmb{\abs{\nabla f}}_{L^p_\sigma({Q})} \cdot [\omega,\sigma]_{A_{p,p}^\alpha({Q})}  \cdot
				[\omega^p]_{A_\infty(Q)}^{\frac{1}{p'}} \cdot  [\sigma^{-p'}]_{A_\infty(Q)}^{\frac{1}{p}}.
   \end{align*}
   This follows by using Proposition \ref{prop:weaksparse}\ref{it:strong} instead of Proposition \ref{prop:weaksparse}\ref{it:weak}.
   While qualitatively weaker than Theorem \ref{theorem:IleqIII}\ref{it:diagpqcase2}  due to the additional assumption on $\sigma$, this can give a quantitatively  stronger estimate if
   $
[\sigma^{-p'}]_{A_\infty(Q)} \leq [\omega^p]_{A_\infty(Q)}.
   $
        \end{enumerate}
    \end{remark}

\medskip

Let us specify Theorem \ref{theorem:IleqIII} to the one-weight case using Lemma \ref{lem:ApApq}. Note that the three cases in the constant below are not mutually exclusive. Hence, for some choices of parameters more than one case may apply and one may choose whichever is smallest.
        \begin{corollary}\label{cor:AleqConeweight}
Let $Q\subseteq \R^d$ be a cube, let $1\leq u\leq p\leq q <\infty$, $w\in A_{u}$ and assume
	\[
	\varepsilon:=\tfrac1{du} - (\tfrac1p - \tfrac1q)\geq 0.
	\]
    Then we have for $f \in W^{1,p}_{w^{1/p}}(Q)$
		\begin{align*}
			\has{\frac{1}{w(Q)}\int_{Q}\abs{f-\ip{f}_{Q}}^qw\dd x}^\frac1q \lesssim_{p,q,d} \ell(Q)  &\cdot \has{\frac{1}{w(Q)}\int_{Q}{\abs{\nabla f}}^pw \dd x}^{\frac1p}.\\
            &\cdot [w]_{A_p}^{\frac{1}{p}} [w]_{A_u}^{\frac{1}{p}-\frac1q} \cdot \begin{cases}
			    \tfrac{1}{\varepsilon}, \quad &\varepsilon>0,\\
                [w]_{A_\infty}^{\frac{1}{p'}}, \quad &p>1,\\
                [w]_{A_\infty}, \qquad & p=1.
			\end{cases}
		\end{align*}
    \end{corollary}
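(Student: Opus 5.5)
The plan is to apply Theorem~\ref{theorem:IleqIII} with the pair $(\omega,\sigma)=(w^{1/q},w^{1/p})$ and $\alpha=(u-1)(\frac1p-\frac1q)$, and to feed in the bound on $[w^{1/q},w^{1/p}]_{A^\alpha_{p,q}(Q)}$ from Lemma~\ref{lem:ApApq}. With this normalization, $\nrm{g}_{L^q_\omega(Q)}=(\int_Q|g|^qw)^{1/q}$ and $\nrm{g}_{L^p_\sigma(Q)}=(\int_Q|g|^pw)^{1/p}$. The scaling parameter of Theorem~\ref{theorem:IleqIII} is
\[
\tfrac1d-(\tfrac1p-\tfrac1q)-\alpha=\tfrac1d-u(\tfrac1p-\tfrac1q)=u\,\varepsilon,
\]
where $\varepsilon$ is the quantity in the corollary. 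Hence the subcritical and critical cases correspond exactly, and $\frac{1}{u\varepsilon}\le\frac1\varepsilon$. The range condition $\alpha<\frac1q+\frac1{p'}$ is automatic, since $\alpha\le u(\frac1p-\frac1q)-(\frac1p-\frac1q)<\frac1d\le 1$ by the assumption on $\varepsilon$. Also $w\in A_u\subseteq A_p$, so $[w]_{A_p}\le[w]_{A_u}$ is finite.

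Next I will track the normalizations. Theorem~\ref{theorem:IleqIII} gives
\[
\Bigl(\int_Q|f-\ip{f}_Q|^qw\Bigr)^{1/q}\lesssim C\,[w]_{A_p}^{\frac1p}[w]_{A_u}^{\frac1p-\frac1q}\Bigl(\tfrac{|Q|^u}{w(Q)}\Bigr)^{\frac1p-\frac1q}\Bigl(\int_Q|\nabla f|^pw\Bigr)^{1/p},
\]
where $C=|Q|^{u\varepsilon}/(u\varepsilon)$ in the subcritical case. I then divide by $w(Q)^{1/q}$ and write $(\int_Q|\nabla f|^pw)^{1/p}=w(Q)^{1/p}(\frac{1}{w(Q)}\int_Q|\nabla f|^pw)^{1/p}$. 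The powers of $w(Q)$ collect to $w(Q)^{\frac1p-\frac1q-(\frac1p-\frac1q)}=1$. The powers of $|Q|$ collect to
\[
|Q|^{u\varepsilon+u(\frac1p-\frac1q)}=|Q|^{1/d}=\ell(Q).
\]
In the critical case $u\varepsilon=0$, and the same computation again gives $|Q|^{1/d}=\ell(Q)$.

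For the critical case, Theorem~\ref{theorem:IleqIII}\ref{it:diagpqcase2} needs $\omega^q=w\in A_\infty(Q)$, which holds since $w\in A_u$, with $[w]_{A_\infty(Q)}\le[w]_{A_\infty}$. The theorem gives the factor $[w]_{A_\infty}^{1/p'}$ when $1<p<q$ and $[w]_{A_\infty}$ otherwise. This matches the cases of the corollary, except for the critical case with $p=q>1$.

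That case, $p=q>1$ with $\varepsilon=0$, is the main obstacle. It forces $\frac1{du}=0$, which is impossible, so it cannot occur. Hence the exponent $\frac1{p'}$ is available for all $p>1$ in the critical case. The case $p=1$ uses $[w]_{A_\infty}$.

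The remaining point is that $f\in W^{1,p}_{w^{1/p}}(Q)$ is exactly the hypothesis $f\in W^{1,p}_\sigma(Q)$ required by the theorem, so Theorem~\ref{theorem:IleqIII} applies directly.
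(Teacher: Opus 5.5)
Your computations are correct for the cases you prove. The choice $(\omega,\sigma)=(w^{1/q},w^{1/p})$ with $\alpha=(u-1)(\frac1p-\frac1q)$, the identity $\frac1d-(\frac1p-\frac1q)-\alpha=u\varepsilon$, the bookkeeping of the powers of $w(Q)$ and $|Q|$, and the observation that $\varepsilon=0$ forces $p<q$ all match the paper.

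The gap is that you read the three cases in the constant as mutually exclusive, with the $A_\infty$-factors only for $\varepsilon=0$. The conditions are literally $\varepsilon>0$, $p>1$ and $p=1$. These overlap, and the corollary asserts each bound whenever its condition holds. In particular, for $p>1$ and \emph{every} $\varepsilon>0$ it claims the bound with $[w]_{A_\infty}^{1/p'}$, uniformly in $\varepsilon$ (and with $[w]_{A_\infty}$ if $p=1$). Your argument does not give this. The subcritical constant $|Q|^{u\varepsilon}/(u\varepsilon)$ blows up as $\varepsilon\downarrow 0$, and Theorem \ref{theorem:IleqIII}\ref{it:diagpqcase2} with your $\alpha$ needs the scaling to be exactly critical. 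This $\varepsilon$-uniform estimate is the main content of the paper's proof.

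The missing idea is to make the scaling critical artificially by enlarging $\alpha$. For $\varepsilon>0$ and $p<q$, apply Theorem \ref{theorem:IleqIII}\ref{it:diagpqcase2} with $\tilde\alpha:=(u-1)(\frac1p-\frac1q)+u\varepsilon=\frac1d-(\frac1p-\frac1q)$, for which the scaling deficit is $0$. You pay for this via Lemma \ref{lem:propertiesMuckenhoupt}\ref{it:weightalphabeta}:
\[
[w^{1/q},w^{1/p}]_{A^{\tilde\alpha}_{p,q}(Q)}\le |Q|^{u\varepsilon}\,[w^{1/q},w^{1/p}]_{A^{\alpha}_{p,q}(Q)}.
\]
Lemma \ref{lem:ApApq} then applies as before, and the powers of $|Q|$ combine to $|Q|^{u\varepsilon+u(\frac1p-\frac1q)}=\ell(Q)$. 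This gives $[w]_{A_\infty}^{1/p'}$ for $1<p<q$ and $[w]_{A_\infty}$ for $p=1<q$.

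If $p=q$, then $\varepsilon=\frac{1}{du}$, so $\frac1\varepsilon\le dp$. Your subcritical bound then already implies the $A_\infty$-bound, because $[w]_{A_\infty}\ge 1$. The paper organizes this differently, splitting according to whether $\frac1\varepsilon\le du\,[w]_{A_\infty}^{1/p'}$.

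Note that $\tilde\alpha<\frac1q+\frac1{p'}$ requires $d\ge 2$. For $d=1$ one has $\varepsilon\ge\frac1q$, so the subcritical bound again suffices.

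A small slip: $\alpha<\frac1d\le 1$ does not by itself give $\alpha<\frac1q+\frac1{p'}=1-(\frac1p-\frac1q)$. Use instead $\alpha+(\frac1p-\frac1q)=u(\frac1p-\frac1q)<\frac{u}{p}\le 1$.
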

\begin{proof}
    If $\varepsilon=0$, the claim follows by using Theorem \ref{theorem:IleqIII}\ref{it:diagpqcase2} with $\alpha  = (u-1) (\frac1p-\frac1q)$ and Lemma \ref{lem:ApApq}. If $p>1$ and $\tfrac{1}{\varepsilon}\leq du \cdot  [w]_{A_\infty}^{1/p'}$, the claim follows by using Theorem \ref{theorem:IleqIII}\ref{it:diagpqcase1} with $\tilde{\varepsilon}:=\varepsilon u$, respectively, again in combination with Lemma \ref{lem:ApApq}.  If $\tfrac{1}{\varepsilon}\geq du \cdot [w]_{A_\infty(Q)}^{1/{p'}}$, then $p<q$, so the claim follows by using Theorem \ref{theorem:IleqIII}\ref{it:diagpqcase2} with $\alpha := (u-1)(\tfrac1p - \tfrac1q) + \varepsilon u$, noting that by Lemma \ref{lem:propertiesMuckenhoupt}\ref{it:weightalphabeta}
    $$
    [w^{1/q},w^{1/p}]_{A_{p,q}^{\alpha}(Q)} \leq \abs{Q}^{\varepsilon u}[w^{1/q},w^{1/p}]_{A_{p,q}^{\alpha-\varepsilon u}(Q) },
     $$
    and then using Lemma \ref{lem:ApApq} with $\tilde{\alpha} := \alpha-\varepsilon u =(u-1)(\tfrac1p - \tfrac1q)$. The proof when $p=1$ is analogous.
\end{proof}

    \subsection{Comparison to the literature}\label{sec:ACliterature}
In this subsection, we compare Theorem \ref{theorem:IleqIII} and Corollary \ref{cor:AleqConeweight} with earlier results in the literature. We start with a comparison of Theorem \ref{theorem:IleqIII}  to previous results under a two-weight $(p,q)$-Muckenhoupt condition, possibly supplemented by $A_\infty$-assumptions on one or both of the weights. 
\begin{enumerate}[(i)]
\item In comparison with the general proof strategy for Poincaré--Sobolev inequalities through self-improving phenomena in e.g.  \cite{FPW98,LP05,MP98,PR19}, our subcritical case corresponds to \(SD_p^s(w)\)-condition, while the critical case corresponds to the \(D_p(w)\)-condition. The sharper estimates obtained here rely on exploiting the full 
$L^p$-structure of the inequalities, rather than deducing them from an abstract general theory.
    \item The subcritical case in Theorem \ref{theorem:IleqIII}\ref{it:diagpqcase1} for $p=q$, $\alpha=0$, additionally assuming $\omega^q \in A_\infty$, was obtained in \cite[Corollary 1.8]{PR19}. The $A_\infty$-condition was subsequently removed in \cite[Theorem 5.3]{LLO21}. In addition to allowing $p\neq q$, and $\alpha \neq 0$, our  proof of Theorem \ref{theorem:IleqIII}\ref{it:diagpqcase1} is also much more elementary than \cite{LLO21,PR19}, see  Remark \ref{rem:remafterAC}\ref{it:ACsimple}. Consequently, our approach can, e.g., be extended to the multi-parameter setting, where sparse domination techniques may not be available \cite{BCOR19}, see Remark \ref{remark:AleqCRect} below.
    \item Qualitative versions of Theorem \ref{theorem:IleqIII}\ref{it:diagpqcase2} go back to e.g. \cite{CW85, CW92, Chu93}, with conditions on the weights on a dilate of $Q$. 
    Assuming only conditions on the weights formulated on $Q$ itself, Theorem \ref{theorem:IleqIII}\ref{it:diagpqcase2}  with the additional assumption that $\sigma^{-p'} \in A_\infty$ and $p>1$ and without explicit dependence on the weight characteristics  was obtained in \cite[Theorem 5.4]{KV21} (see also \cite[Theorem 9.21]{KLV21}). These additional assumptions are absent in Theorem \ref{theorem:IleqIII}\ref{it:diagpqcase2}.
    \item Using the local subrepresentation formula
    \begin{equation}\label{eq:localsub}
\abs{f(x)-\ip{f}_{Q}} \lesssim_d \int_{Q} \frac{\abs{\nabla f(y)}}{|x-y|^{d-1}} \dd y=: I_1(\ind_Q\abs{\nabla f})(x), \qquad x \in Q
    \end{equation}
    for $f \in W^{1,1}(\R^d)$, one can deduce two-weight Poincar\'e inequalities from two-weight estimates for the fractional integral operator $I_1$, see, e.g., \cite{SW92}. Sharp weak-type two-weight estimates for $I_1$ can be found in \cite[Theorem 2.2]{CM13} (see \cite{LMPT10} for the one-weight case). Combined with the truncation method, this provides an alternative proof of Theorem \ref{theorem:IleqIII}\ref{it:diagpqcase2} in the case $p<q$. 
    \item In  \cite{DD08} a variant of the approach via the local subrepresentation formula \eqref{eq:localsub} was used to obtain two-weighted Poincar\'e--Sobolev inequalities with an additional factor measuring the distance to the boundary of $Q$. Their main results can also be shown using  Theorem \ref{theorem:IleqIII}\ref{it:diagpqcase2} and a Whitney decomposition.
    \item \label{it:Mwcomp} In \cite[Theorem 1.21]{PR19} it was shown that 
\begin{align*}
			{\nrm{f-\ip{f}_{Q}^{\omega^q}}_{L^q_\omega(Q)}} \lesssim  \nrmb{\abs{\nabla f}}_{L^p_\sigma({Q})} 
   \end{align*}
   for $\frac1p = \frac1q+\frac1n$,  and where $\omega = w^{1/q}$ for a general weight $w$ and $\sigma := M(w\ind_Q)^{\frac{1}{n'}}\cdot w^{-\frac1{p'}}$. A simple calculation shows that $(\omega,\sigma) \in A_{p,q}^0(Q)$, so this result fits in the framework of Theorem \ref{theorem:IleqIII}\ref{it:diagpqcase2}. However, the results are incomparable, as we would require an $A_\infty$-condition on $w$.    
\end{enumerate}
Next, we compare the one-weight estimate in Corollary \ref{cor:AleqConeweight} with existing results. 
\begin{enumerate}[(i)]\setcounter{enumi}{6}
    \item Qualitatively, the result in Corollary \ref{cor:AleqConeweight} goes back to \cite[Theorem 1.5]{FKS82}, see also \cite[Chapter 15]{HKM06}. A quantitative version  of this result in terms of weight characteristics was obtained in the subcritical regime for a specific $\varepsilon>0$ in \cite[Corollary 1.13]{PR19} and in the critical regime $\varepsilon=0$ in \cite[Corollary 1.15]{PR19}. Our quantitative weight dependence is identical in the subcritical regime when taking \begin{align*}
        \frac1p-\frac1q = \frac 1{d}\cdot\frac{1}{u +\log[w]_{A_u}} \qquad &\Longrightarrow \qquad [w]_{A_u}^{\frac1p-\frac1q}\lesssim 1 \quad \text{and}\quad \varepsilon = \frac1d\cdot\frac{ \log[w]_{A_u}}{u(u+\log[w]_{A_u})}\\
        &\Longrightarrow \qquad [w]_{A_u}^{\frac1p-\frac1q}\cdot \min\cbraceb{\tfrac{1}{\varepsilon},[w]_{A_\infty}}\lesssim_{u,d} 1.
    \end{align*}
    The results in the critical regime are quantitatively incomparable.
    \item   
    The subcritical case for a specific $\varepsilon>0$ and the critical case $\varepsilon=0$ with $u=1$ of Corollary \ref{cor:AleqConeweight} were recently obtained in \cite[Theorem 2.4]{Cla25}, using the local subrepresentation formula \eqref{eq:localsub} and one-weight estimates for $I_1$. The dependence on the weight characteristic in \cite{Cla25} is sharp and smaller than the dependence in Corollary \ref{cor:AleqConeweight}. The suboptimality of Corollary \ref{cor:AleqConeweight} in this case stems from the fact that it is derived as a specialization of our general two-weight theory, rather than by means of arguments tailored to the one-weight setting.
\end{enumerate}

\begin{remark}\label{remark:AleqCRect}
   The proof ingredients for Theorem \ref{theorem:IleqIII}\ref{it:diagpqcase1}, i.e. \eqref{eq:11poincare}, \eqref{eq:basicdom}, and Lemma \ref{lem:QtoQ0}, are also 
 available for a rectangle $R\subseteq \R^d$ and its dyadic subrectangles. Hence, using suitably adapted rectangular Muckenhoupt weight classes, Theorem \ref{theorem:IleqIII}\ref{it:diagpqcase1} and Corollary \ref{cor:AleqConeweight} with $\varepsilon>0$ also hold for $R$, which yields a simple proof of \cite[Corollary 2.7]{CMPR23} in the case $m=1$. We leave the details to the interested reader.
\end{remark}

\section{Two-weight  fractional Poincar\'e--Sobolev inequality} \label{sec:AB}	
We now turn to the two-weight fractional Poincar\'e--Sobolev inequality, i.e. 
\begin{align*}
    \nrm{f-\ip{f}_{Q}}_{L^q_\omega(Q)} &\lesssim (1-s)^{\frac1r} \cdot [f]_{F^{s,\sigma}_{p,r}(Q)}\\&=(1-s)^{\frac1r} \cdot \has{ \int_{Q} \has{\int_{Q} \frac{|f(x)-f(y)|^r}{|x-y|^{d+sr}}\dd y}^{p/r}\sigma(x)^p\dd x}^{1/p}.
\end{align*}
under appropriate conditions on $p,q,r,s,\sigma$ and $\omega$. We note that such inequalities would be significantly simpler to prove without the BBM-factor $(1-s)^{\frac1r}$, see e.g. the proof of Lemma \ref{lem:fracbasic} below in the case $s\leq \frac12$. 
As far as the authors are aware, the results in this section are entirely new when $p\neq r$ or $\omega^q \neq \sigma^p$. We will compare our results to the existing literature for $p= r$ and $\omega^q = \sigma^p$
in Subsection \ref{sec:ABliterature}.

\subsection{Main result}\label{subsec:mainlower} Our main two-weight  fractional Poincar\'e--Sobolev inequality reads as follows.
	\begin{theorem}\label{theorem:IleqII}
		Let $Q\subseteq \R^d$ be a cube, $1\leq p \leq q < \infty$ and $r \in [1,\infty)$, $s \in (0,1)$, $0\leq \alpha < \tfrac1q+\tfrac1{p'}$, $(\omega,\sigma)\in \ApqO$  and assume
		\[
			\varepsilon = \tfrac{s}{d}-\hab{\tfrac1p-\tfrac1q}-\alpha\geq 0.
		\]
        Then we have the following assertions for $f \in F^{s,\sigma}_{p,r}(Q):$
		\begin{enumerate}[(i)]
			\item\label{it:diagpqfraccase1} \emph{(Subcritical case)} If $\varepsilon>0$, we have
			\begin{align*}
				\nrm{f-\ip{f}_{Q}}_{L^q_\omega(Q)} &\lesssim_d  [\omega,\sigma]_{A_{p,q}^\alpha(Q)}\cdot (1-s)^{\frac1r} \cdot    \frac{\abs{Q}^{\varepsilon}}{\varepsilon} [f]_{F^{s,\sigma}_{p,r}(Q)}.
			\end{align*}
			\item\label{it:diagfracpqcase2}  \emph{(Critical case I)} If $\varepsilon=0$ and we additionally assume $p\geq r$ and $\omega^q\in A_\infty(Q)$, then
					\begin{align*}
						\nrm{f-\ip{f}_{Q}}_{L^q_\omega(Q)} &\lesssim_{p,q,r,d} [\omega,\sigma]_{A_{p,q}^\alpha(Q)} \cdot  (1-s)^{\frac1r}\cdot [f]_{F^{s,\sigma}_{p,r}(Q)} \cdot \begin{cases}
				[\omega^q]_{A_\infty(Q)}^{\frac{1}{p'}}\quad & 1<p<q ,\\
				[\omega^q]_{A_\infty(Q)} & \text{otherwise}.
			\end{cases}
					\end{align*}
				\item \label{it:diagfracpqcase3}  \emph{(Critical case II)} If $\varepsilon=0$ and we additionally assume $p>1$ and  $\omega^q,\sigma^{-p'}\in A_\infty(Q)$, then 
				\begin{align*}
					\nrm{f-\ip{f}_{Q}}_{L^q_\omega(Q)} &\lesssim_{p,q,r,d} [\omega,\sigma]_{A_{p,q}^\alpha(Q)} \cdot (1-s)^{\frac1r}\cdot [f]_{F^{s,\sigma}_{p,r}(Q)} \cdot \begin{cases}
				[\omega^q]_{A_\infty(Q)}^{\frac{1}{p'}} + [\sigma^{-p'}]_{A_\infty(Q)}^{\frac{1}{q}} & p<q,\\
				[\omega^q]_{A_\infty(Q)}^{\frac{1}{p'}} \hspace{3pt}\cdot\hspace{3pt} [\sigma^{-p'}]_{A_\infty(Q)}^{\frac{1}{q}} & p=q.
		\end{cases}
				\end{align*}
		\end{enumerate}
	\end{theorem}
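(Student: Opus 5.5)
The proof follows the three-step scheme from the introduction: domination, an unweighted local estimate, and a dyadic norm estimate. The key new ingredient is an unweighted $(1,r)$-type fractional Poincar\'e inequality on a cube with the sharp BBM factor, which we call Lemma \ref{lem:fracbasic}. For a cube $R$ and $f\in F^{s}_{1,r}(R)$ it reads
\begin{align*}
\ipb{|f-\ip{f}_R|}_R \lesssim_d (1-s)^{\frac1r}\,\ell(R)^{s}\,\ipb{f^{s,r}_R}_R .
\end{align*}
For $s\le \frac12$ the factor $(1-s)^{1/r}$ is harmless. Here the estimate is elementary: write $|f(x)-\ip{f}_R|\le \ip{|f(x)-f(\cdot)|}_R$, use $|x-y|\lesssim \ell(R)$, and apply H\"older in $y$ (with exponent $r$), which gives the bound with $\ell(R)^{s}$.

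For $s$ close to $1$ we need the sharp factor. I would prove it by a dyadic telescoping/averaging argument in the spirit of Bourgain--Brezis--Mironescu. Compare $f$ with averages at scale $2^{-k}\ell(R)$ and control each increment $\ip{|f-\ip{f}_{R'}|}_{R'}$ by the local difference energy at that scale, weighted by $(2^{-k}\ell(R))^{s}$. Then sum over $k$ so that the geometric series in $2^{-k(1-s)}$ produces $(1-s)^{-1}$ before taking $r$-th roots. Concretely: average the $r$-th power of differences over all shifts of size about $2^{-k}\ell(R)$, use H\"older with the correct distribution of $|h|^{-d-sr}$ weight, and note that $\sum_k 2^{-k(1-s)r}$ produces $(1-s)^{-1}$; balancing this with H\"older yields $(1-s)^{1/r}$. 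This step is the main obstacle, since the sharp constant must survive an $L^1$-type left-hand side. An alternative I would try first is to reduce to the known unweighted $L^r$ result $\nrm{f-\ip{f}_R}_{L^r(R)}\lesssim (1-s)^{1/r}\ell(R)^s[f]_{F^s_{r,r}(R)}$ from BBM/Maz'ya--Shaposhnikova combined with a truncation/averaging trick. However, that gives $\ip{f^{s,r}_R}_{r,R}$ rather than the $L^1$-average. That version would still suffice for Critical case I, where $p\ge r$, but not directly for the other cases.

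Given Lemma \ref{lem:fracbasic}, the subcritical case \ref{it:diagpqfraccase1} follows the proof of Theorem \ref{theorem:IleqIII}\ref{it:diagpqcase1}. For $R\in\mc D(Q)$, apply H\"older with $\sigma$, using $f^{s,r}_R\le f^{s,r}_Q$, and then the $A^\alpha_{p,q}$ condition:
\begin{align*}
\ipb{|f-\ip{f}_R|}_R\lesssim (1-s)^{\frac1r}\,[\omega,\sigma]_{A_{p,q}^\alpha(Q)}\,\frac{|R|^{\varepsilon}}{\omega^q(R)^{1/q}}\,\nrm{f^{s,r}_Q}_{L^p_\sigma(R)} .
\end{align*}
Then insert this into \eqref{eq:basicdom} and apply Lemma \ref{lem:QtoQ0} with $g=f^{s,r}_Q$, noting $\nrm{g}_{L^p_\sigma(Q)}=[f]_{F^{s,\sigma}_{p,r}(Q)}$.

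For the critical cases, $\beta:=\frac sd=\alpha+\frac1p-\frac1q$. Lemma \ref{lem:sparsedom} together with Lemma \ref{lem:fracbasic} gives the pointwise bound
\begin{align*}
|f-\ip{f}_Q|\lesssim (1-s)^{1/r}\,\mc A^{1,\beta}_{\mc S}\bigl(f^{s,r}_Q\bigr).
\end{align*}
For Critical case II we apply Proposition \ref{prop:weaksparse}\ref{it:strong} with $r=1<p$, which gives exactly the stated constants. For Critical case I we use Proposition \ref{prop:weaksparse}\ref{it:weak} to get a weak-type bound, then upgrade it to a strong-type bound with the weak-implies-strong principle (Proposition \ref{prop:weaktostrongclassic}) via truncation, as in Theorem \ref{theorem:IleqIII}. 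The hypothesis $p\ge r$ is what makes the truncation compatible with the $F^{s}_{p,r}$ seminorm. Truncations $f_\lambda$ satisfy $|f_\lambda(x)-f_\lambda(y)|\le|f(x)-f(y)|\ind_{\{\text{one of }x,y\text{ in the band}\}}$. The contribution of pairs with $x$ outside the band and $y$ inside must be absorbed, and the $\ell^{p/r}$-superadditivity ($p/r\ge1$) over disjoint dyadic bands lets the band energies sum to $[f]^p_{F^{s,\sigma}_{p,r}(Q)}$.
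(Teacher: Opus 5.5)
Your overall architecture matches the paper's. The subcritical case goes through \eqref{eq:basicdom}, H\"older against $\sigma$, the $A^\alpha_{p,q}(Q)$ condition and Lemma \ref{lem:QtoQ0}. The critical cases go through Lemma \ref{lem:sparsedom}, the bound $\abs{f-\ip{f}_Q}\lesssim(1-s)^{1/r}\mc{A}^{1,\beta}_{\mc{S}}(f^{s,r}_Q)$ and Proposition \ref{prop:weaksparse}. The genuine gap is the one analytic input you set out to prove yourself: the sharp $L^1$ estimate of Lemma \ref{lem:fracbasic} for $s$ near $1$, on which all three cases rest.

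Your mechanism telescopes over dyadic scales, bounds the $k$-th increment by $(2^{-k}\ell(R))^{s}E_k^{1/r}$ with $E_k$ the energy at scale $2^{-k}\ell(R)$, and then applies H\"older in $k$. This treats the $E_k$ as independent, and then no summation in $k$ can produce the factor: the inequality $\sum_k 2^{-ks}E_k^{1/r}\lesssim(1-s)^{1/r}(\sum_k E_k)^{1/r}$ already fails for $E_k=\delta_{k0}$. The series $\sum_k2^{-k(1-s)r}\gtrsim((1-s)r)^{-1}$ you mention gives the gain only if it multiplies the \emph{left}-hand side. That requires a cross-scale lower bound: every scale must carry a definite fraction of the top-scale oscillation.

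The missing idea is subadditivity of the $L^1$-modulus of continuity. With $\Delta_hf(x):=f(x+h)-f(x)$, convexity of $R$ and the identity $\Delta_{2^kh}f=\sum_{i=0}^{2^k-1}\Delta_hf(\cdot+ih)$ give $\nrm{\Delta_{2^kh}f}_{L^1(R\cap(R-2^kh))}\le2^k\nrm{\Delta_hf}_{L^1(R\cap(R-h))}$. Hence the $L^1$-differences at $\abs{h}\eqsim2^{-k}\ell(R)$ are on average $\gtrsim_d2^{-k}\nrm{f-\ip{f}_R}_{L^1(R)}$. Raising to the power $r$, weighting by $(2^{-k}\ell(R))^{-sr}$ and summing over $k$ yields
\[
\nrm{f-\ip{f}_R}_{L^1(R)}\lesssim_{r,d}(1-s)^{\frac1r}\,\ell(R)^{s}\has{\int_{\abs{h}\lesssim\ell(R)}\frac{\nrm{\Delta_hf}_{L^1(R\cap(R-h))}^r}{\abs{h}^{d+sr}}\dd h}^{\frac1r}.
\]
Minkowski's integral inequality (not H\"older) then bounds the last factor by $[f]_{F^{s}_{1,r}(R)}$. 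The paper does not reprove this; it cites \cite[Corollary 3.6]{DLTYY24}, combined with \eqref{eq:avgbelowinf} and Minkowski.

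Two further points. First, your fallback via the $L^r$ BBM inequality would not rescue Critical case I. If you replace $\ipb{f^{s,r}_R}_R$ by $\ipb{f^{s,r}_R}_{r,R}$, the argument of Proposition \ref{prop:weaksparse}\ref{it:weak} needs $\sup_R\abs{R}^\alpha\ip{\omega}_{q,R}\ip{\sigma^{-1}}_{\tau,R}<\infty$ with $\frac1\tau=\frac1r-\frac1p$. This is $(\omega^r,\sigma^r)\in A^{r\alpha}_{p/r,q/r}(Q)$ (cf.\ Lemma \ref{lem:propertiesMuckenhoupt}\ref{it:alphat}), which is more restrictive than $(\omega,\sigma)\in A^\alpha_{p,q}(Q)$ when $r>1$. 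The paper makes exactly this remark about the subrepresentation approach.

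Second, the weak-to-strong step cannot use Proposition \ref{prop:weaktostrongclassic}, which is the gradient version. You need Proposition \ref{prop:weakstrong}, together with its $L^1$ hypothesis, which is \eqref{eq:avgcontrolAB} with $R=Q$. Your description of that truncation argument is also off. The inequality $\abs{f_\lambda(x)-f_\lambda(y)}\le\abs{f(x)-f(y)}\ind_{\{\text{one of }x,y\text{ in the band}\}}$ fails for pairs straddling the band ($v(x)\le\lambda$, $v(y)\ge2\lambda$), where the truncated difference equals $\lambda$. Such a pair straddles many bands, so the Lipschitz bound alone overcounts when summing over $k$. The paper instead uses $\abs{v_{\lambda_{k-1}}(x)-v_{\lambda_{k-1}}(y)}<2\cdot2^{k-j}\abs{v(x)-v(y)}$ for $y\in A_j$, $j\ge k+1$. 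It is in summing this geometric factor that $\ell^r\hookrightarrow\ell^p$, i.e.\ $r\le p$, enters.
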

	\begin{remark}~\begin{enumerate}[(i)]
        \item The weak implies strong truncation argument in Proposition \ref{prop:weakstrong} requires $p\geq r$; this is the only step where this hypothesis is used and this forces us to treat the critical setting in two different regimes. Moreover, note that neither covers the range $p=1<r$. 
	    \item Critical case I and II in Theorem \ref{theorem:IleqII} are both applicable in  case that $p\geq r$ and $p>1$. Case I is qualitatively stronger in this setting and if $1<p<q$ it is also quantitatively stronger, whereas in the case $p=q$ the results are quantitatively incomparable.
	\end{enumerate}
		
	\end{remark}
	Again, we need some preparation before we can prove the theorem. In the proof of Theorem \ref{theorem:IleqIII}, we used the $(1,1)$-Poincar\'e inequality \eqref{eq:11poincare} as the basis for our proof. In the proof of Theorem \ref{theorem:IleqII} we will replace this inequality by the following lemma.
	\begin{lemma} \label{lem:fracbasic}
		Let $r \in [1,\infty)$, $s \in (0,1)$ and $f \in F^{s}_{1,r}(Q)$, then
		\begin{align*}
			\nrm{f - \ip{f}_Q}_{L^1(Q)} &\lesssim_{r,d}  (1-s)^{\frac1r} \cdot\ell(Q)^s\cdot [f]_{F^{s}_{1,r}(Q)}.
		\end{align*}
	\end{lemma}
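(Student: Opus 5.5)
The plan is to split into the two regimes $s\le\frac12$ and $s>\frac12$. Only the second needs care, because there the factor $(1-s)^{1/r}$ is a genuine gain.

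\emph{Case $s\le\frac12$.} Here $(1-s)^{1/r}\ge 2^{-1/r}$, so no gain is needed. I would write
\[
\nrm{f-\ip{f}_Q}_{L^1(Q)}\le \frac1{|Q|}\int_Q\int_Q|f(x)-f(y)|\dd y\dx .
\]
Applying H\"older in $y$ gives $\int_Q|f(x)-f(y)|\dd y\le |Q|^{1-1/r}\bigl(\int_Q|f(x)-f(y)|^r\dd y\bigr)^{1/r}$. Then I would insert $|x-y|^{d+sr}\le (\sqrt d\,\ell(Q))^{d+sr}$ inside the integral. The powers of $|Q|$ combine to exactly $\ell(Q)^s$, with a constant depending on $d$ only.

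\emph{Case $s>\frac12$.} The idea is to exploit that a first-order-type difference quantity is subadditive across scales, and then average over $N\simeq (1-s)^{-1}$ dyadic scales. For $h\in\R^d$ set
\[
\Delta(h):=\int_{Q\cap(Q-h)}|f(x+h)-f(x)|\dx .
\]
Then $\nrm{f-\ip{f}_Q}_{L^1(Q)}\le |Q|^{-1}\int_{|h|\le \sqrt d\,\ell(Q)}\Delta(h)\dd h$. Since $Q\cap(Q-2h)\subseteq Q\cap (Q-h)$ by convexity, and $x+h\in Q$ for such $x$, the triangle inequality gives $\Delta(2h)\le 2\Delta(h)$. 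Iterating, $\Delta(h)\le 2^{j}\Delta(2^{-j}h)$ for all $j\ge0$.

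Now let $A_j:=\{h:2^{-j-1}\sqrt d\,\ell(Q)<|h|\le 2^{-j}\sqrt d\,\ell(Q)\}$ and $a_j:=|A_j|^{-1}\int_{A_j}\Delta(h)\dd h$. Changing variables $h\mapsto 2^{-j}h$ gives $a_0\le 2^j a_j$ for every $j$. For $0\le j\le N:=\lceil (1-s)^{-1}\rceil$ we have $2^{j(1-s)}\lesssim 1$. Hence
\[
\bigl(\ell(Q)^{-s}a_0\bigr)^r\lesssim \frac1{N}\sum_{j=0}^{N}\bigl((2^{-j}\ell(Q))^{-s}a_j\bigr)^r ,
\]
and the prefactor $N^{-1}\simeq 1-s$ is exactly $(1-s)^{1/r}$ after taking $r$-th roots.

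It remains to control each $a_j$ by the $j$-th scale of the seminorm. Writing out $\Delta$ and using H\"older in $h$ over $A_j$, and $|h|\simeq 2^{-j}\ell(Q)$ there, I would bound
\[
(2^{-j}\ell(Q))^{-s}a_j\lesssim_d \int_Q b_j(x)\dx,\qquad b_j(x):=\has{\int_{Q\cap(x+A_j)}\frac{|f(x)-f(y)|^r}{|x-y|^{d+sr}}\dd y}^{1/r}.
\]
Here the $|A_j|^{-1/r}$ from H\"older is absorbed by $|h|^{-d/r}$. Then Minkowski's inequality in the form $\bigl(\sum_j\nrm{b_j}_{L^1}^r\bigr)^{1/r}\le \nrmb{(\sum_j b_j^r)^{1/r}}_{L^1}$, valid for $r\ge1$, together with the disjointness of the $A_j$, gives
\[
\ell(Q)^{-s}a_0\lesssim (1-s)^{1/r}[f]_{F^s_{1,r}(Q)}.
\]

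The step I expect to be the main obstacle is recovering the full oscillation from $a_0$. The estimate $\nrm{f-\ip{f}_Q}_{L^1}\lesssim|Q|^{-1}\int\Delta$ involves all $h$ with $|h|\le\sqrt d\,\ell(Q)$, not only the top annulus $A_0$. I would handle this by applying the same subadditivity argument to each annulus $A_k$ separately: start the chain of $N$ scales at $k$ instead of $0$. This gives $a_k\lesssim (2^{-k}\ell(Q))^{s}(1-s)^{1/r}[f]_{F^s_{1,r}(Q)}$. Then I would sum $\sum_k |A_k|\,a_k$, which converges geometrically with a constant depending only on $d$. One must also check that the domains $Q\cap(Q-h)$ stay nondegenerate for $|h|$ comparable to $\ell(Q)$. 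If this causes trouble, I would first restrict to $|h|\le c_d\,\ell(Q)$ and cover the remaining range by a chaining argument through the midpoint.
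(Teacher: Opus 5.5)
Your proof is correct. For $s\le\frac12$ it coincides with the paper's H\"older estimate, but for $s>\frac12$ you take a genuinely different route.

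The paper does not prove that case by hand. It imports \cite[Corollary 3.6]{DLTYY24} with $X=L^1(Q)$ and $k=1$. Together with \eqref{eq:avgbelowinf}, this bounds $\nrm{f-\ip{f}_Q}_{L^1(Q)}$ by $(1-s)^{1/r}\ell(Q)^s$ times the Besov-type quantity $\bigl(\int_{|h|\le \ell(Q)}\nrm{(f-f(\cdot+h))\ind_Q(\cdot+h)}_{L^1(Q)}^r|h|^{-d-sr}\dd h\bigr)^{1/r}$. The paper then passes to $[f]_{F^s_{1,r}(Q)}$ by Minkowski's inequality.

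Your argument is in effect a self-contained derivation of that imported step. The subadditivity $\Delta(2h)\le 2\Delta(h)$ of the $L^1$-modulus on the convex set $Q$, combined with averaging over $N\simeq(1-s)^{-1}$ dyadic annuli, is exactly what produces the sharp factor $(1-s)^{1/r}$. Your final Minkowski step is the same as the paper's.

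Your route has several advantages. It is elementary, and the constant depends only on $d$ rather than on $r,d$. It works for every $s\in(0,1)$: for $s\le\frac12$ one simply has $N=2$, so the case split is not even needed. It also holds verbatim on any convex set, with $\ell(Q)$ replaced by the diameter.

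The paper's citation is shorter and sits in a framework covering higher-order differences and general ball Banach function spaces, which is relevant to the extensions discussed in Section \ref{sec:extensions}. Your subadditivity step, by contrast, relies on the translation invariance of the unweighted $L^1$-norm.

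Finally, the obstacle you anticipate at the end does not arise. If $Q\cap(Q-h)=\emptyset$, then simply $\Delta(h)=0$, and the convexity argument gives $\Delta(2h)\le 2\Delta(h)$ for every $h$. Restarting the chain at each annulus $A_k$ works exactly as you describe, and $\sum_k |A_k|\lesssim_d |Q|$ controls the final sum. No nondegeneracy check or midpoint chaining is needed.
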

\begin{proof}
For $s\geq \frac12$, we know by combining \eqref{eq:avgbelowinf} and \cite[Corollary 3.6]{DLTYY24} with $X = L^1(Q)$ and $k=1$ that
\begin{align*}
     \nrm{f - \ip{f}_Q}_{L^1(Q)} &\lesssim_{r,d}  (1-s)^{\frac1r} \cdot\ell(Q)^s\cdot \has{\int_{\abs{h}\leq \ell(Q)} \frac{\nrmb{\hab{f-f(\cdot +h)}\ind_{Q}(\cdot+h)}^r_{L^1(Q)}}{\abs{h}^{d+sr}}\dd h}^{1/r}\\
    &\leq (1-s)^{\frac1r} \cdot\ell(Q)^s\cdot [f]_{F^{s}_{1,r}(Q)}
\end{align*}
where the second line follows from Minkowski's inequality. 

If $s \leq \frac12$, it suffices to prove the estimate without the BBM-factor $(1-s)^{\frac1r}$. By H\"older's inequality, we have
\begin{align*}
\nrm{f-\ip{f}_Q}_{L^1(Q)}
&\leq
\frac{1}{|Q|}\int_Q\int_Q \abs{f(x)-f(y)}\dd x\dd y \\
&\leq
\frac{1}{|Q|}
\int_Q
\has{
\int_Q \frac{\abs{f(x)-f(y)}^r}{\abs{x-y}^{d+sr}}\dd y
}^{1/r}\cdot  \has{
\int_Q \abs{x-y}^{\frac{(d+sr)r'}{r}}\,\dd y
}^{1/r'} \dd x\\
&\lesssim_d
\frac{\ell(Q)^{\frac{d+sr}{r}} \abs{Q}^{\frac1{r'}}}{|Q|}
\int_Q
\has{
\int_Q \frac{\abs{f(x)-f(y)}^r}{\abs{x-y}^{d+sr}}\dd y
}^{1/r}\dd x\\
&= \ell(Q)^s\cdot [f]_{F^{s}_{1,r}(Q)}
\end{align*}
proving the claim.
\end{proof}
	Let us now prove the main theorem of this section. The outline of the proof is the same as the proof of Theorem \ref{theorem:IleqIII}.
\begin{proof}[Proof of Theorem \ref{theorem:IleqII}]
	For $R \in \mc{D}(Q)$  define
	\[
	f_{R}^{s,r}(x):=\left(\int_R \frac{|f(x)-f(y)|^r}{|x-y|^{d+sr}}\mathrm{d}y\right)^{1/r}, \qquad x \in R.
	\]
	For \ref{it:diagpqfraccase1} we take $\varepsilon>0$. Using Lemma \ref{lem:fracbasic} and H\"older's inequality, we obtain the estimate
	\begin{align}
    \begin{split}
		\frac{1}{\abs{R}} \int_R \abs{f-\ip{f}_R} \dx &\lesssim_{r,d} (1-s)^{\frac1r} \frac{\ell(R)^s}{\abs{R}} \int_R f_{R}^{s,r}\dx \\
		&\leq  (1-s)^{\frac1r}\frac{1}{\abs{R}^{1-\frac{s}{d}}} \has{\int_R (f_{R}^{s,r})^p \sigma^p\dx}^{1/p} \has{\int_R \sigma^{-p'}\dx}^{1/p'}\\
		&\leq (1-s)^{\frac1r} [\omega,\sigma]_{A_{p,q}^\alpha({Q})} \frac{\abs{R}^{\varepsilon}}{\nrm{\omega}_{L^q(R)}} \nrm{f^{s,r}_{Q}}_{L^p_\sigma(R)}.\end{split}\label{eq:avgcontrolAB}
	\end{align}
	The result now follows by using \eqref{eq:basicdom} and  Lemma \ref{lem:QtoQ0}.
    
	For \ref{it:diagfracpqcase2} and \ref{it:diagfracpqcase3}, assume $\omega^q\in A_\infty(Q)$, take $\varepsilon=0$ and let $\beta = \alpha+\frac1p - \frac 1q = \frac{s}{d}\in (0,1)$.
	Using Lemma \ref{lem:fracbasic}, we have
	\[
		\ip{\abs{f-\ip{f}_R}}_R \lesssim_{r,d} |R|^{\beta}(1-s)^{1/r} \ipb{{f_{R}^{s,r}}}_{R}.
	\]
	Moreover, by Lemma \ref{lem:sparsedom}, there exists a sparse collection of cubes $\mc{S}\subseteq \mc{D}({Q})$ such that
	\begin{align*}
		\abs{f(x)-\ip{f}_{{Q}}} &\lesssim_d \sum_{R \in \mc{S}} \ip{\abs{f - \ip{f}_R}}_{R} \ind_R(x) \\
		&\lesssim_{r,d} (1-s)^{\frac1r}\sum_{R \in \mc{S}}|R|^{\beta} \ipb{{f_{R}^{s,r}}}_{R}\mathbf{1}_R(x) \\
		&\leq (1-s)^{\frac1r} \cdot \mathcal{A}^{1,\beta}_\mathcal{S}(f_{Q}^{s,r}).
	\end{align*}
	If $p\geq r$, we use Proposition \ref{prop:weaksparse}\ref{it:weak} to get
	\begin{align*}
		\nrmb{f-\ip{f}_{Q} }_{L^{q,\infty}_\omega({Q})} \lesssim_{p,q,d}  [\omega,\sigma]_{A_{p,q}^\alpha({Q})} \cdot (1-s)^{\frac1r} \cdot \nrmb{f_{Q}^{s,r}}_{L^p_\sigma({Q})} \cdot \begin{cases}
			[\omega^q]_{A_\infty(Q)}^{\frac{1}{p'}}, &1<p<q,\\
			[\omega^q]_{A_\infty(Q)}, \quad&\text{otherwise} .
		\end{cases}
	\end{align*}
	Hence, critical case I follows from the weak implies strong principle, see Proposition \ref{prop:weakstrong}, by using \eqref{eq:avgcontrolAB} with $R=Q$ and $\varepsilon=0$ and noting that $[\omega^q]_{A_\infty(Q)}\geq 1$. If $p>1$ and $\sigma^{-p'}\in A_\infty(Q)$, we use Proposition \ref{prop:weaksparse}\ref{it:strong}, proving critical case II.
\end{proof}

As in Corollary \ref{cor:AleqConeweight}, we can specialize Theorem \ref{theorem:IleqII} to the one-weight case using Lemma \ref{lem:ApApq}. The proof follows the same lines as the proof of Corollary \ref{cor:AleqConeweight}. As in that corollary, we note that the four cases in the constant below are not mutually exclusive. 

        \begin{corollary}\label{cor:AleqBoneweight}
	Let $Q\subseteq \R^d$ be a cube, $1\leq u\leq p \leq q < \infty$ and $r \in [1,\infty)$, $s \in (0,1)$, $w\in A_u$, and assume
		\[
			\varepsilon = \tfrac{s}{du}-\hab{\tfrac1p-\tfrac1q}\geq 0.
		\]
Then we have for $f \in F^{s,\sigma}_{p,r}(Q)$  with $\sigma = w^{1/p}$
		\begin{align*}
			\has{\frac{1}{w(Q)}\int_{Q}\abs{f-\ip{f}_{Q}}^qw\dd x}^\frac1q \lesssim \ell(Q)^s &\cdot  (1-s)^{\frac1r} \cdot \has{ \frac{1}{w(Q)}\int_{Q} \has{\int_{Q} \frac{|f(x)-f(y)|^r}{|x-y|^{d+sr}}\dd y}^{\frac{p}{r}}w(x)\dd x}^{\frac{1}{p}} \\&\cdot [w]_{A_p}^{\frac{1}{p}} [w]_{A_u}^{\frac{1}{p}-\frac1q} \cdot \begin{cases}
			    \tfrac{1}{\varepsilon}, &\varepsilon>0,\\
                [w]_{A_\infty}^{\frac{1}{p'}},  & 1< p <q, \, p\geq r\\
                [w]_{A_\infty},  & 1=r= p,\\
                {[w]_{A_\infty}^{\frac{1}{p'}}+\bracb{w^{-\frac{1}{p-1}}}_{A_\infty}^{\frac{1}{q}}},& 1<p<q.
			\end{cases}
		\end{align*}
        where the implicit constant only depends on $p,q,r$ and $d$.
    \end{corollary}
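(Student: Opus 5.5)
We would follow the proof of Corollary \ref{cor:AleqConeweight}, now applying Theorem \ref{theorem:IleqII} with $(\omega,\sigma)=(w^{1/q},w^{1/p})$ and using Lemma \ref{lem:ApApq} to control the characteristic. Set $\alpha_0:=(u-1)(\tfrac1p-\tfrac1q)$. Lemma \ref{lem:ApApq} gives
\[
[w^{1/q},w^{1/p}]_{A^{\alpha_0}_{p,q}(Q)}\leq [w]_{A_p}^{\frac1p}[w]_{A_u}^{\frac1p-\frac1q}\has{\frac{|Q|^u}{w(Q)}}^{\frac1p-\frac1q}.
\]
The left-hand side of Theorem \ref{theorem:IleqII} is $w(Q)^{1/q}$ times the normalized average in the corollary, and the seminorm on the right is $w(Q)^{1/p}$ times the normalized quantity. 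After dividing, the powers of $w(Q)$ cancel exactly: $w(Q)^{\frac1p-\frac1q}\cdot w(Q)^{-(\frac1p-\frac1q)}=1$. What remains is a power of $|Q|$, namely $|Q|^{u(\frac1p-\frac1q)+\varepsilon_\alpha}$, where $\varepsilon_\alpha$ is the scaling deficit in the theorem. Since $\tfrac sd=\alpha+\tfrac1p-\tfrac1q+\varepsilon_\alpha$, we need $u(\tfrac1p-\tfrac1q)$ plus the extra $\alpha$-shift to add up to $s/d$, so this power equals $\ell(Q)^s$.

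If $\varepsilon=0$, we take $\alpha=\alpha_0$, so that $\tfrac sd-(\tfrac1p-\tfrac1q)-\alpha_0=u\varepsilon=0$. Critical case I of Theorem \ref{theorem:IleqII} (requiring $p\geq r$) gives the $[w]_{A_\infty}^{1/p'}$ bound for $1<p<q$ and the $[w]_{A_\infty}$ bound for $p=r=1$. Here $\omega^q=w\in A_u\subseteq A_\infty$. Critical case II gives the sum bound for $1<p<q$; it needs $\sigma^{-p'}=w^{-\frac1{p-1}}\in A_\infty$, which holds because $w\in A_u\subseteq A_p$.

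If $\varepsilon>0$, there are two routes.
\begin{itemize}
\item Subcritical route: apply case \ref{it:diagpqfraccase1} with $\alpha=\alpha_0$ and deficit $\tilde\varepsilon=u\varepsilon$. Then $|Q|^{\tilde\varepsilon}/\tilde\varepsilon$ combines with the $|Q|^{u(\frac1p-\frac1q)}$ factor to give $\ell(Q)^s$, with constant $\frac1{u\varepsilon}\leq\frac1\varepsilon$.
\item Critical route: when $\tfrac1\varepsilon$ is large compared with the $A_\infty$ quantity, we instead want the critical-case constants. Set $\alpha:=\alpha_0+u\varepsilon$, which makes the deficit zero; this forces $p<q$ because $\varepsilon\leq \frac{s}{du}$ means $u\varepsilon$ is admissible only when some slack exists. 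Lemma \ref{lem:propertiesMuckenhoupt}\ref{it:weightalphabeta} gives $[\cdot]_{A^\alpha}\leq|Q|^{u\varepsilon}[\cdot]_{A^{\alpha_0}}$, after which we apply critical case I or II and Lemma \ref{lem:ApApq}. The $|Q|$ powers again total $\ell(Q)^s$.
\end{itemize}
The four cases therefore just record which route is used.

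The main obstacle is bookkeeping rather than substance. We must check the constraints $\alpha<\tfrac1q+\tfrac1{p'}$ and $\beta<1$ in each route. We must also ensure that the implicit constants of Theorem \ref{theorem:IleqII} stay uniform; they depend on $p,q,r,\alpha$, and $\alpha$ is bounded by the other parameters. Finally, the $\ell(Q)^s$ exponent identity has to be verified in each route.
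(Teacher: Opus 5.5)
Your proposal is correct and is essentially the paper's proof, which just repeats the argument of Corollary~\ref{cor:AleqConeweight}: Lemma~\ref{lem:ApApq} with $\alpha_0=(u-1)(\frac1p-\frac1q)$, the subcritical case with deficit $u\varepsilon$, and the critical cases with $\alpha=\alpha_0+u\varepsilon$ via Lemma~\ref{lem:propertiesMuckenhoupt}\ref{it:weightalphabeta}. One small correction: in this fractional setting the shift to $\alpha_0+u\varepsilon$ does not force $p<q$ (for $p=q$ one gets $\alpha=s/d<1$, which is admissible), but nothing depends on this, because the corollary's cases already impose $p<q$ wherever the $[w]_{A_\infty}^{1/p'}$-type constants are claimed, and $p=q=r=1$ is covered by the ``otherwise'' branch of critical case I.
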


\begin{proof}
    This follows from Theorem \ref{theorem:IleqII}  exactly as Corollary \ref{cor:AleqConeweight} followed from Theorem \ref{theorem:IleqIII}.
\end{proof}
    
    \subsection{Comparison to the literature}\label{sec:ABliterature}
In this subsection, we compare Theorem \ref{theorem:IleqII} and Corollary \ref{cor:AleqBoneweight} with earlier results in the literature. Our two-weight fractional Poincar\'e--Sobolev inequalities  with $p\neq r$ or  $\omega^q \neq \sigma^p$ with the sharp BBM-factor $(1-s)^{\frac1r}$ seem to be entirely new, so we focus on the one-weight case, i.e.  Corollary \ref{cor:AleqBoneweight}, for $p=r$. 
\begin{enumerate}[(i)]
    \item Qualitatively, the subcritical case in Corollary \ref{cor:AleqBoneweight} for a specific $\varepsilon>0$, $p=r$ and $u=1$ was obtained in \cite{HMPV23b}.
Indeed, for $q\geq p$ satisfying
    \begin{align*}
        \frac1p-\frac1q = \frac {s}{d}\cdot\frac{1}{1 +\log[w]_{A_1}} \qquad &\Longrightarrow \qquad \varepsilon = \frac{s}d\cdot\frac{ \log[w]_{A_u}}{u(u+\log[w]_{A_u})},
    \end{align*}
    \cite[Theorem 2.1]{HMPV23b} yields the estimate Corollary \ref{cor:AleqBoneweight} with weight dependence 
    $
[w]_{A_1}^{1/p+1}.
    $
    This was improved to general $1 \leq u \leq p=r$ in \cite[Theorem 5.9]{MPW24} with weight dependence
    $$
[w]_{A_p}^{\frac{1}p}[w]_{A_\infty}.
    $$
    at the cost of a singularity for $s \to 0$ of the form $s^{-\frac1{p'}}$. In Corollary \ref{cor:AleqBoneweight}, besides allowing $p\neq r$, we obtain for this specific $\varepsilon>0$ and $p>1$
    the weight dependence
    $$
    [w]_{A_p}^{\frac{1}p}\min\cbraceb{\tfrac{1}{s},[w]_{A_\infty}^{\frac{1}{p'}}},
    $$
     offering the choice between either a singularity as $s\to 0$ while completely removing the $[w]_{A_\infty}$ factor or no singularity as $s\to 0$ and an improvement of the power on $[w]_{A_\infty}$ from $1$ to $\frac1{p'}$. The case $p=1$ also holds in our case, with power $1$ on $[w]_{A_\infty}$ as in \cite[Theorem 5.9]{MPW24}.
    \item The critical case $\varepsilon=0$ in Corollary \ref{cor:AleqBoneweight} for $p=r$ and $u=1$ was obtained in \cite[Theorem 2.3]{HMPV23b} with weight dependence
    $$
[w]_{A_1}^{\frac{2}p+1-\frac{1}{q}}.
    $$
    This was improved to general $1 \leq u \leq p=r$ in \cite[Theorem 5.7]{MPW24} with weight dependence
    $$
[w]_{A_p}^{\frac{1}p}[w]_{A_u}^{\frac{1}p-\frac1q}[w]_{A_\infty}.
    $$
    at the cost of a singularity for $s \to 0$ of the form $s^{-\frac1{p'}}$. Besides allowing $p\neq r$, we remove the singularity and improve the power on $[w]_{A_\infty}$ from $1$ to $\frac1{p'}$ for $p>1$ in Corollary \ref{cor:AleqBoneweight}.
    \item The case $p=r$, $u=1$ and $\varepsilon = \frac{1}{p'}$ of Corollary \ref{cor:AleqBoneweight}  was very recently considered in \cite[Theorem 2.7]{Cla25}. The quantitative dependence on the weight characteristics in \cite{Cla25} in the case $p=1$ is sharper than Corollary \ref{cor:AleqBoneweight}, at the expense of an added singularity for $s \to 0$ of the form $s^{-1-\frac{1}{p'}}$. For $p>1$ the results are quantitatively incomparable.
\end{enumerate}
The above mentioned results in \cite{Cla25} are proven via the following local fractional subrepresentation formula in  \cite[(5.1)]{Cla25}
\begin{equation*}
\abs{f(x)-\ip{f}_{Q}} \lesssim_d \ell(Q)^{\frac{s}{r'}} \cdot \frac{(1-s)^{\frac{1}{r}}}{s^{\frac{1}{r '}}}  \cdot \has{\int_{Q} \frac{\abs{f^{s,r}_Q(y)}^r}{|x-y|^{d-s}}}^\frac{1}{r} \dd y, \qquad x \in Q,
    \end{equation*}
    where
    $$
f^{s,r}_Q(y):= \has{\int_Q\frac{\abs{f(y)-f(z)}^r}{\abs{y-z}^{d+sr}}}^{\frac1r}, \qquad y \in Q.
    $$
This formula allows an alternative proof strategy to Theorem \ref{theorem:IleqII} via the boundedness of the fractional integral operator $I_s$ from $L^{p/r}_{\sigma^r}(\R^d)$ to $L^{q/r,\infty}_{\omega^r}(\R^d)$,  which can be found in \cite[Theorem 2.2]{CM13}. However, such an approach  will yield the condition $(\omega^r,\sigma^r)\in A_{p/r,q/r}^{r\alpha}(Q)$, which is more restrictive than $(\omega,\sigma)\in A_{p,q}^\alpha(Q)$ unless $r=1$.

\section{Two-weight Sobolev to Triebel–Lizorkin embedding}\label{sec:BC}
To complete our Poincar\'e--Sobolev sandwich, we finally turn to the two-weight Sobolev to Triebel–Lizorkin embedding, i.e. 
\begin{align*}
     (1-s)^{\frac1r} \cdot [f]_{F^{s,\omega}_{q,r}(Q)}\lesssim  \nrmb{|\nabla f|}_{L^{p}_{\sigma}(Q)}
\end{align*}
under appropriate conditions on $p,q,r,s,\sigma$ and $\omega$.

\subsection{Main result}
In our main theorem in this setting we will introduce an additional parameter $p_0\in [1,p]$, which allows more flexibility in the range of $p$. The price for this extra flexibility is that, in the $A_{p,q}^\alpha(Q)$-Muckenhoupt characteristic, one has to replace $p$ by a smaller exponent $p_1\in[1,p]$. If $p_0=1$, then this loss is absent and one simply has $p_1=p$.
\begin{theorem} \label{theorem:IIleqIII}
    Let $Q\subseteq \R^d$ be a cube, $1\leq p_0\leq  p\leq q<\infty$ and $r\in [1,\infty)$ such that $\frac{1}{p_0} -\frac1r \leq \frac1d$ and let $$
\tfrac{1}{p_1}:= 1+\tfrac{1}{p}-\tfrac{1}{p_0}.
    $$
    Let $s \in (0,1)$, $0\leq \alpha <\tfrac1q+\tfrac1{p_1'}$, $(\omega,\sigma)\in A_{p_1,q}^\alpha(Q)$ and assume
    \begin{align*}
        \varepsilon = \tfrac{1-s}{d} -\hab{\tfrac1p - \tfrac1q}-\alpha \geq 0.
    \end{align*}
    Then we have the following assertions for $f\in W^{1,p}_\sigma(Q)$:
	\begin{enumerate}[(i)]
		\item \label{it:BC1}\emph{(Subcritical case)} If $\varepsilon>0$, we have 
        \begin{align*}
			[f]_{F^{s,\omega}_{q,r}(Q)}	\lesssim_{d}[\omega,\sigma]_{A^\alpha_{p_1,q}(Q)}\cdot\frac{|Q|^{\varepsilon}}{\varepsilon^{1/\gamma}} \nrmb{|\nabla f|}_{L^p_\sigma(Q)}
                    \cdot(\varepsilon +\tfrac{s}{d})^{-1}.
		\end{align*}
        where $\gamma=\min\{q,r\}$. If  we additionally assume $ p> p_0$ and  $\sigma^{-{p'_1}}\in A_\infty(Q)$, then
        \begin{align*}
			[f]_{F^{s,\omega}_{q,r}(Q)}	\lesssim_{p,q,\alpha,d}[\omega,\sigma]_{A^\alpha_{p_1,q}(Q)}\cdot \frac{|Q|^{\varepsilon}}{\varepsilon^{1/r}}\nrmb{|\nabla f|}_{L^p_\sigma(Q)} \cdot [\sigma^{-p_1'}]_{A_\infty(Q)}^{\frac1q}.
		\end{align*}
		\item  \label{it:BC2}\emph{(Critical case)} If $\varepsilon=0$ and we additionally assume $1<p_0<p$, $\tfrac{1}{p_0}-\tfrac1r < \tfrac1d$ and $\omega^q,\sigma^{-p_1'}\in A_\infty(Q)$, then
        \begin{align*}
            [f]_{F^{s,\omega}_{q,r}(Q)}\lesssim_{p,p_0,q,\alpha,d} &\frac{[\omega,\sigma]_{A_{p_1,q}^\alpha(Q)}}{s^{1+1/r}\cdot (1-s)^{1/r}} \cdot\nrmb{|\nabla f|}_{L^{p}_{\sigma}(Q)}   \cdot  \begin{cases}
				[\omega^{q}]_{A_\infty(Q)}^{\frac1{p'}} + [\sigma^{-p_1'}]_{A_\infty(Q)}^{\frac{1}{q}}\quad & p<q,\vspace{3pt}\\
				[\omega^q]_{A_\infty(Q)}^{\frac{1}{p'}} \hspace{3pt}\cdot\hspace{3pt} [\sigma^{-p_1'}]_{A_\infty(Q)}^{\frac{1}{q}} & p=q.
			\end{cases}
        \end{align*}
	\end{enumerate}
\end{theorem}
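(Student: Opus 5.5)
The plan follows the three-step scheme of the introduction: a domination principle (Lemma~\ref{lemma:Tldomsubcrit} in the subcritical case, Theorem~\ref{thm:fracsparsedom} in the critical case), an unweighted local estimate, and a weighted summation.

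\textbf{Step (b): the unweighted local estimate.} For a cube $R$ we want to bound the $L^r$-oscillation of $f$ on $3R$ by the gradient in $L^{p_0}$. We use the $(p_0,r)$-Sobolev--Poincar\'e inequality
\[
\ipb{|f-\ip{f}_{3R}|}_{r,3R}\lesssim_d \ell(R)\ipb{|\nabla f|}_{p_0,3R},
\]
which holds since $\frac1{p_0}-\frac1r\le\frac1d$. Since $|\nabla f|$ is only integrable in $L^{p_0}$, H\"older's inequality with $\frac1{p_0}=\frac1p+\frac1{p_1'}$ gives
\[
\ipb{|\nabla f|}_{p_0,3R}\le |3R|^{-1/p_0}\nrm{\nabla f}_{L^p_\sigma(3R)}\,\sigma^{-p_1'}(3R)^{1/p_1'}.
\]
This is exactly why $A_{p_1,q}^\alpha$ appears in the statement. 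Combined with the $A^\alpha_{p_1,q}$-condition, this gives
\[
\ell(R)^{-s}\ipb{|f-\ip{f}_{3R}|}_{r,3R}\lesssim [\omega,\sigma]_{A^\alpha_{p_1,q}}\,\frac{|R|^{\varepsilon}}{\omega^q(R)^{1/q}}\,\nrm{\nabla f}_{L^p_\sigma(3R)}.
\]
The exponent check is $\frac{1-s}{d}-\frac1{p_0}+\frac1{p_1'}-\alpha-\frac1q=\varepsilon$.

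\textbf{Subcritical case.} Apply Lemma~\ref{lemma:Tldomsubcrit}, which produces two square functions.

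The second term, $\big(\sum_R \ell(R)^{-sr}\ipb{|f-\ip{f}_{3R}|}_{r,3R}^r\ind_R\big)^{1/r}$, is handled like Lemma~\ref{lem:QtoQ0}, but with an $\ell^r$-sum over scales instead of an $\ell^1$-sum. Using Minkowski (if $q\ge r$) or the $\ell^q\hookrightarrow\ell^r$ embedding, we get $\big(\sum_j 2^{-jd\varepsilon\gamma}\big)^{1/\gamma}\sim\varepsilon^{-1/\gamma}$, which explains the exponent $\gamma=\min\{q,r\}$.

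For the first term, $\big(\sum_R\ell(R)^{-sr}|f(x)-\ip{f}_R|^r\ind_R\big)^{1/r}$, insert the domination \eqref{eq:basicdom} on each $R$: $|f(x)-\ip{f}_R|\lesssim\sum_{S\subseteq R}\ip{|f-\ip{f}_S|}_S\ind_S$. Then exchange the sums, using $\big(\sum_{R\supseteq S}\ell(R)^{-sr}\big)^{1/r}\lesssim \ell(S)^{-s}$. The cost is a geometric factor in $s$, and it is absorbed when the scales are recombined against the decay $2^{-jd\varepsilon}$, giving the factor $(\varepsilon+s/d)^{-1}$.

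For the refined version with $p>p_0$ and $\sigma^{-p_1'}\in A_\infty$, replace the H\"older step by the local maximal bound $\ipb{|\nabla f|}_{p_0,R}\le\inf_R \big(M^{\beta}_{Q}(|\nabla f|^{p_0})\big)^{1/p_0}$ up to scaling. Corollary~\ref{cor:strongmax} is then applied with exponents $p/p_0>1$, using Lemma~\ref{lem:propertiesMuckenhoupt}\ref{it:alphat}. This removes the $q$-summation loss and yields only $\varepsilon^{-1/r}$ together with $[\sigma^{-p_1'}]_{A_\infty}^{1/q}$.

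\textbf{Critical case.} Apply Theorem~\ref{thm:fracsparsedom}. The $s^{-1-1/r}$ term is controlled by Lemma~\ref{lem:sparsedom}-type averages and the $(1,1)$-Poincar\'e inequality, giving a sparse operator of $|\nabla f|$ of order $\beta=\frac{1-s}{d}$. The term $\ipb{f^{s,r}_{3R}}_R$ requires the unweighted embedding $(1-s)^{1/r}\ell(R)^{s-1}\ipb{f^{s,r}_{3R}}_R\lesssim\ipb{|\nabla f|}_{p_0,3R}$, the BBM-type estimate (via $f(x)-f(y)=\int_0^1\nabla f\cdot(x-y)$ and a maximal function). Here the strict inequality $\frac1{p_0}-\frac1r<\frac1d$ is needed.

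Both terms are then dominated by $\mathcal{A}^{1,\beta}_{\mathcal{S}}$ applied to $(|\nabla f|^{p_0})^{1/p_0}$-averages. Since $p/p_0>1$, Proposition~\ref{prop:weaksparse}\ref{it:strong} can be applied to the weights $(\omega^{p_0},\sigma^{p_0})$, converted via Lemma~\ref{lem:propertiesMuckenhoupt}\ref{it:alphat} into $A^\alpha_{p_1,q}$.

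\textbf{Main obstacle.} I expect the hardest step to be the critical-case bound for $\ipb{f^{s,r}_{3R}}_R$ by the $L^{p_0}$-gradient with the correct factor $(1-s)^{-1/r}$ and without losing the enlargement $3R$ (reflection extension).
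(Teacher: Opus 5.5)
Your overall architecture is the paper's. Lemma~\ref{lemma:Tldomsubcrit} splits the difference quotient into $g+h$. The term $h$ is treated with the unweighted $(p_0,r)$-Poincar\'e inequality, H\"older with $\frac1{p_0}=\frac1p+\frac1{p_1'}$, the $A^\alpha_{p_1,q}$-condition and an $\ell^\gamma$-version of Lemma~\ref{lem:QtoQ0}; your exponent check should read $+\frac1q-\alpha$, not $-\alpha-\frac1q$. The refined bound uses $M^{\beta p_0}_Q(\abs{\nabla f}^{p_0})^{1/p_0}$ and Corollary~\ref{cor:strongmax}. The critical case uses Theorem~\ref{thm:fracsparsedom}, the $(1,1)$-Poincar\'e inequality and Proposition~\ref{prop:weaksparse}\ref{it:strong} for $(\omega^{p_0},\sigma^{p_0})$; running $T_1$ through $p_0$-averages as well is fine and gives the same constants.

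\textbf{The gap: the $g$-term in the first subcritical estimate.} As written, this step does not give the stated constant. If you first sum over all ancestors, $(\sum_{R\supseteq S}\ell(R)^{-sr})^{1/r}\le (1-2^{-sr})^{-1/r}\ell(S)^{-s}$, you are left with $s^{-1/r}\sum_{S}\ell(S)^{-s}\ip{\abs{f-\ip{f}_S}}_S\ind_S$. This is an $\ell^1$-sum over scales, and Lemma~\ref{lem:QtoQ0} then yields the constant $s^{-1/r}\varepsilon^{-1}$. Once the ancestor sum has been performed, no decay remains to absorb the factor $s^{-1/r}$. This constant blows up as $s\to0$ while $\varepsilon$ stays bounded away from $0$. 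It also has $\varepsilon^{-1}$ instead of $\varepsilon^{-1/\gamma}$, which matters because $\varepsilon^{-1/q}$ is sharp by Proposition~\ref{prop:counterexample_jumpq}.

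The paper instead keeps the generation $j$ of $R$ outermost. Minkowski puts an $\ell^\gamma_j$-sum outside the $L^q_\omega$-norm. At fixed $j$ it applies the weighted Poincar\'e inequality Theorem~\ref{theorem:IleqIII}\ref{it:diagpqcase1} on each $R$, whose scaling deficit is $\varepsilon+\frac sd$. This produces $(\varepsilon+\frac sd)^{-1}$ together with $\ell(R)^{-s}\abs{R}^{\varepsilon+s/d}=\abs{R}^{\varepsilon}$, and the $\ell^\gamma$-sum over $j$ gives $\varepsilon^{-1/\gamma}$. In your inline formulation, the repair is to apply Minkowski in the generation gap $m=k-j$ \emph{before} summing over $j$. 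Then $2^{-ms}$ combines with the decay $2^{-md\varepsilon}$ coming from $\abs{S}^\varepsilon$, giving $(1-2^{-(s+d\varepsilon)})^{-1}\varepsilon^{-1/\gamma}\lesssim_d(\varepsilon+\frac sd)^{-1}\varepsilon^{-1/\gamma}$.

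\textbf{Two smaller points.}

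First, in the refined subcritical estimate you only describe $h$. The paper handles $g$ pointwise via $\abs{f(x)-\ip{f}_{Q_j}}\lesssim_{p,q,\alpha,d} \ell(Q_j)^{1-d\beta}M^\beta_Q(\abs{\nabla f})(x)$ with $\beta=\frac1p-\frac1q+\alpha$. This gives $g\lesssim \varepsilon^{-1/r}\abs{Q}^\varepsilon M^\beta_Q(\abs{\nabla f})$, and then Corollary~\ref{cor:strongmax} applies at $(p,q)$, which is available because $p>p_0\ge1$. Bounding by the $p_0$-maximal function, as you do for $h$, also works.

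Second, for $T_2$ you do not need a separate BBM-type argument. The paper applies H\"older on $R$ and then the refined subcritical case just proved, unweighted, on the cube $3R$, applied to the reflected extension. There $(p_0,p,q)$ is replaced by $(p_0^*,p_0,p_0)$, with $1<p_0^*<p_0$ and $\frac1{p_0^*}-\frac1r\le\frac1d$. This is exactly where the strict inequality is used, and $\varepsilon=\frac{1-s}{d}$ gives $(1-s)^{-1/r}$. A route through the fundamental theorem of calculus and the Hardy--Littlewood maximal function alone would not supply the $L^{p_0}\to L^r$ gain when $p_0<r$. You need the $(p_0,r)$-Sobolev--Poincar\'e mechanism of the $h$-term in any case.
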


Before turning to the proof of Theorem \ref{theorem:IIleqIII}, let us make a few remarks on the statement.
\begin{remark}~
\begin{enumerate}[(i)] 
    \item For the first inequality of the subcritical case of Theorem \ref{theorem:IIleqIII}, we can relax the assumption on $\alpha$ to $\alpha < \tfrac1q + \tfrac1{p'}$. In particular, this includes weights $(\omega,\sigma)$ which are in $\Apq$ for $\alpha = \tfrac1q + \tfrac1{p_1'}$, but not for any $\alpha <\tfrac1q + \tfrac1{p_1'}$.
    \item If, in the subcritical case of Theorem \ref{theorem:IIleqIII}, we additionally assume $\omega^q\in A_\infty(Q)$, we can use Theorem \ref{theorem:IleqIII}\ref{it:diagpqcase2} combined with Lemma \ref{lem:propertiesMuckenhoupt}\ref{it:weightalphabeta} in the proof instead of Theorem \ref{theorem:IleqIII}\ref{it:diagpqcase1}, allowing us to replace   $(\varepsilon+\tfrac{s}{d})^{-1}$ with an appropriate power of $[\omega^q]_{A_\infty(Q)}$, i.e.
        \begin{align*}
			[f]_{F^{s,\omega}_{q,r}(Q)}	\lesssim_{d}[\omega,\sigma]_{A^\alpha_{p_1,q}(Q)}\cdot\frac{|Q|^{\varepsilon}}{\varepsilon^{1/\gamma}} \nrmb{|\nabla f|}_{L^p_\sigma(Q)}
                    \cdot \begin{cases}
				[\omega^{q}]_{A_\infty(Q)}^{1/p'} \quad & 1<p<q,\\
				[\omega^q]_{A_\infty(Q)}  & \text{otherwise}.
			\end{cases}
		\end{align*}   
    \item Note that we always have $d\varepsilon \leq 1-s$, with equality if and only if $p=q$ and $\alpha=0$. Thus, in the subcritical case, $\varepsilon$ plays the role of the quantity $(1-s)$, although in general it may be smaller. The exponent on $\varepsilon$ in the subcritical case is of the expected order on $(1-s)$.
 \item As we shall show in Proposition \ref{prop:counterexample_jumpq}, the factor
    $\varepsilon^{-1/q}$ in the estimate of
    Theorem \ref{theorem:IIleqIII}\ref{it:BC1} is sharp in the cases
    $1=d=p\leq q<r$ or $1=p=q$. For the case $1=p<q<r$ and $d\geq 2$ in Theorem \ref{theorem:IIleqIII}\ref{it:BC1}, we do not know the sharp factor.
    \item If in the critical case of Theorem \ref{theorem:IIleqIII} we either have $p_0=1$ or $\tfrac1{p_0}-\tfrac1r=\tfrac1d$, the inequality still holds if we replace $(1-s)^{1/r}$ by $(1-s)^{1/p_0}$. However, since $p_0$ is an auxiliary parameter where an infinitesimal small change often does not matter, we have excluded this case from the statement of the theorem. The proof changes only in the place where one estimates $T_2$ using the subcritical case of this theorem, where now one cannot find a $p_0^*$ that satisfies the given conditions. Therefore, using $p_0$ instead of $p_0^*$ leads to the exponent $\frac1{p_0}$ on $1-s$.
    \end{enumerate}
\end{remark}

\begin{proof}[Proof of Theorem \ref{theorem:IIleqIII}]
    Let us first take $\varepsilon>0$. By Lemma \ref{lemma:Tldomsubcrit}, for  a.e. $x \in \R^d$ we have the pointwise estimate
    \begin{align*}
        \has{\int_Q \frac{|f(x)-f(y)|^r}{|x-y|^{d+sr}}\dd y}^{1/r} &\lesssim_d \has{\sum_{R\in \mathcal{D}({Q})} \ell(R)^{-sr}{|f(x)-\ip{f}_{R}|^r}\ind_R(x)}^{1/r},\\
      &\hspace{1cm}+ \has{ \sum_{R\in \mathcal{D}({Q})}\ell(R)^{-sr}   \ipb{|f-\ip{f}_{3R}|}_{r,3R}^r \ind_R(x)}^{1/r}\\
    &=: g(x)+h(x).
    \end{align*}
    We will estimate $\nrm{g}_{L^q_\omega(Q)}$ and $\nrm{h}_{L^q_\omega(Q)}$ to prove the subcritical case. We first prove the case without additional assumptions. Define $\gamma=\min\{q,r\}$. Using the embedding $\ell^\gamma\hookrightarrow \ell^r$, we have
    \begin{align*}
        g(x) \leq \has{\sum_{R\in \mathcal{D}({Q})} \ell(R)^{-s\gamma}{|f(x)-\ip{f}_{R}|^\gamma}\ind_R(x)}^{1/\gamma}.
    \end{align*}
    Using Minkowski's inequality twice, we have 
	\begin{align*}
		\nrm{g}_{L^q_\omega(Q)}&\leq \has{\sum_{j=0}^\infty \has{ \int_{Q}\has{ \sum_{\substack{R\in \mathcal{D}({Q}):\\ \ell(R)=2^{-j}\ell({Q})}}\ell(R)^{-s}{|f(x)-\ip{f}_{R}|}\ind_R(x)}^q\omega(x)^q\dx }^{\gamma/q}}^{1/\gamma}\\
        &= \ell(Q)^{-s}\has{\sum_{j=0}^\infty 2^{js\gamma}\has{ \int_{Q}\has{ \sum_{\substack{R\in \mathcal{D}({Q}):\\ \ell(R)=2^{-j}\ell({Q})}}{|f(x)-\ip{f}_{R}|^p}\ind_R(x)}^{q/p}\omega(x)^q\dx }^{\gamma/q}}^{1/\gamma}\\
        &\leq \ell(Q)^{-s}\has{\sum_{j=0}^\infty 2^{js\gamma}\has{\sum_{\substack{R\in \mathcal{D}({Q}):\\ \ell(R)=2^{-j}\ell({Q})}}\nrm{f-\ip{f}_{R}}_{L^q_\omega(R)}^p} ^{\gamma/p}}^{1/\gamma},
    \end{align*}
    where we used that $p\leq q$ and that cubes $R\in \mc{D}(Q)$ with the same side length are pairwise disjoint.
    Now using the two-weight $(p,q)$-Poincar\'e inequality in Theorem \ref{theorem:IleqIII}\ref{it:diagpqcase1}, we get
    \begin{align*}
        \nrm{g}_{L^q_\omega(Q)}&\lesssim_d [\omega,\sigma]_{\Apq}\cdot (\varepsilon+\tfrac{s}{d})^{-1}\cdot |Q|^{\varepsilon}\has{\sum_{j=0}^\infty 2^{-j\gamma d\varepsilon}\has{\sum_{\substack{R\in \mathcal{D}({Q}):\\ \ell(R)=2^{-j}\ell({Q})}}\nrmb{|\nabla f|}_{L^p_\sigma(R)}^p}^{\gamma/p}}^{1/\gamma} \\
        &=[\omega,\sigma]_{\Apq}\cdot (\varepsilon+\tfrac{s}{d})^{-1}\cdot|Q|^{\varepsilon}\nrmb{|\nabla f|}_{L^p_\sigma(Q)}\has{\sum_{j=0}^\infty 2^{-j\gamma d\varepsilon}}^{1/\gamma} \\
        &\lesssim_d  [\omega,\sigma]_{A^\alpha_{p_1,q}(Q)}\cdot (\varepsilon+\tfrac{s}{d})^{-1}\cdot\frac{|Q|^{\varepsilon}}{\varepsilon^{1/\gamma}} \nrmb{|\nabla f|}_{L^p_\sigma(Q)},
    \end{align*}
where we used $p_1\leq p$ and Lemma \ref{lem:propertiesMuckenhoupt}\ref{it:weightpq}.

    For $h$, we also use the embedding $\ell^\gamma \hookrightarrow \ell^r$, Minkowski and then the unweighted $(p_0,r)$-Poincar\'e inequality to estimate
    \begin{align*}
        \nrm{h}_{L^q_\omega(Q)}&\leq \has{\sum_{j=0}^\infty \has{\int_Q\has{\sum_{\substack{R\in \mathcal{D}({Q}):\\ \ell(R)=2^{-j}\ell(Q)}}\ell(R)^{-s}    \ipb{|f-\ip{f}_{3R}|}_{r,3R} }^{q}\omega(x)^q\dx }^{\gamma/q}}^{1/\gamma} \\
        &\lesssim_{d} \has{\sum_{j=0}^\infty \has{\int_Q\has{\sum_{\substack{R\in \mathcal{D}({Q}):\\ \ell(R)=2^{-j}\ell(Q)}}\ell(R)^{1-s-d/p_0}    \nrmb{|\nabla f|}_{L^{p_0}(3R)} }^{q}\omega(x)^q\dx }^{\gamma/q}}^{1/\gamma} \\
        &\leq \has{\sum_{j=0}^\infty \has{\int_Q\has{\sum_{\substack{R\in \mathcal{D}({Q}):\\ \ell(R)=2^{-j}\ell(Q)}}\ell(R)^{1-s-d/p_0}    \nrmb{|\nabla f|}_{L^p_\sigma(3R)}\cdot \sigma^{-p_1'}(3R)^{1/p_1'}}^{q}\omega(x)^q\dx }^{\gamma/q}}^{1/\gamma},
    \end{align*} 
    where we used H\"older in the last step. Since
    \[
    \sigma^{-p_1'}(3R)^{1/p_1'}\lesssim_d \frac{[\omega,\sigma]_{A^{\alpha}_{p_1,q}(Q)}}{\omega^q(R)^{1/q}}|R|^{\frac1q + \frac{1}{p_1'}-\alpha},
    \]
    we have
    \begin{align*}
        \nrm{h}_{L^q_\omega(Q)} \lesssim_d [\omega,\sigma]_{A^{\alpha}_{p_1,q}(Q)}\has{\sum_{j=0}^\infty \has{\int_Q\has{\sum_{\substack{R\in \mathcal{D}({Q}):\\ \ell(R)=2^{-j}\ell(Q)}}\frac{|R|^{\varepsilon}}{\omega^q(R)^{1/q}}    \nrmb{|\nabla f|}_{L^p_\sigma(3R)} }^{q}\omega(x)^q\dx }^{\gamma/q}}^{1/\gamma},
    \end{align*}
    which can be estimated exactly as in the proof of Lemma \ref{lem:QtoQ0} (which treats the case $\gamma=1$)    to obtain
    \begin{align*}
        \nrm{h}_{L^q_\omega(Q)} \lesssim_d [\omega,\sigma]_{A^{\alpha}_{p_1,q}(Q)} \cdot \frac{|Q|^{\varepsilon}}{\varepsilon^{1/\gamma}} \nrmb{|\nabla f|}_{L^p_\sigma(Q)}.
    \end{align*}

    Now suppose $p>p_0$ and $\sigma^{-{p'_1}}\in A_\infty(Q)$. Fix $x\in Q$ and let $\cbrace{Q_j}_{j=1}^\infty$ be the sequence of dyadic cubes in $\mc{D}(Q)$ such that $\ell(Q_j) = 2^{-j}\ell(Q)$ and $x \in Q_j$ for all $j\geq 0$. To estimate $g$, we note that by combining \eqref{eq:basicdom} and \eqref{eq:11poincare}, we have for all $j\geq 0$
\begin{align*}
    |f(x)-\ip{f}_{Q_j}| \lesssim_d \sum_{k=j}^\infty \ipb{|f-\ip{f}_{Q_k}|}_{Q_k} &\lesssim_d   \sum_{k=j}^\infty \ell(Q_k) \ipb{\abs{\nabla f}}_{Q_k}\\
    &\leq M_{Q}^{\beta}\hab{\abs{\nabla f}}(x) \cdot \sum_{k=j}^\infty 2^{j-k} \ell(Q_j)\cdot \abs{Q_j}^{-\beta}\\
    &= 2\, \ell(Q_j)^{1-d(\frac1p-\frac1q)-d\alpha} \cdot M_{Q}^{\beta}\hab{\abs{\nabla f}}(x),
\end{align*}
where $\beta = \frac1p-\frac1q+\alpha\in (0,1/p_0)$.
Therefore,
\begin{align*}
    g(x) &\lesssim_d 
    \has{\sum_{j=0}^\infty \ell(Q_j)^{\varepsilon dr} }^{1/r}\cdot M_{Q}^{\beta}\hab{\abs{\nabla f}}(x) \lesssim \frac{|Q|^{\varepsilon}}{\varepsilon^{1/r}} \cdot M_{Q}^{\beta}\hab{\abs{\nabla f}}(x).
\end{align*}
By boundedness of $M_{Q}^{\beta}:L_\sigma^p(Q)\to L_\omega^q(Q)$ (see Corollary \ref{cor:strongmax}), we conclude
\begin{align*}
    \nrm{g}_{L^q_\omega(Q)} &\lesssim_{d} [\omega,\sigma]_{\Apq} \cdot \frac{|Q|^{\varepsilon}}{\varepsilon^{1/r}} \nrmb{\abs{\nabla f}}_{L^p_\sigma(Q)} \cdot [\sigma^{-p'}]_{A_\infty(Q)}^{\frac1q}.
\end{align*}
Now note that $[\omega,\sigma]_{\Apq} \leq [\omega,\sigma]_{A_{p_1,q}^\alpha(Q)}$ by Lemma \ref{lem:propertiesMuckenhoupt}\ref{it:weightpq} and $[\sigma^{-p'}]_{A_\infty(Q)} \leq [\sigma^{-p_1'}]_{A_\infty(Q)}$ by \eqref{eq:Ainftyembeds}.

To estimate $h$, we again use the unweighted $(p_0,r)$-Poincar\'e inequality to obtain
\begin{align*}
  \has{\int_{3 {Q_j}} {|f(y)-\ip{f}_{3Q_j}|^r}\dd y}^\frac1r    &\lesssim_d |Q_j|^{\frac1d-\frac1{p_0}+\frac1r}\has{\int_{3 {Q_j}} \abs{\nabla{f}}^{p_0}\dd y}^{1/{p_0}}\\
     &\lesssim_d \ell(Q_j)^{1+\frac{d}r-d(\frac1p-\frac1q)-d\alpha}\cdot \has{M^{\beta p_0}_Q\hab{\abs{\nabla f}^{p_0}}}^\frac1{p_0}.
\end{align*}
Therefore we can estimate
\begin{align*}
    h(x) \lesssim_d 
    \has{\sum_{j=0}^\infty \ell(Q_j)^{\varepsilon dr} }^{1/r}\cdot \has{M^{\beta p_0}_Q\hab{\abs{\nabla f}^{p_0}}(x)}^\frac1{p_0} \lesssim \frac{|Q|^{\varepsilon}}{\varepsilon^{1/r}} \cdot \has{M^{\beta p_0}_Q\hab{\abs{\nabla f}^{p_0}}(x)}^\frac1{p_0}.
\end{align*}
Using the boundedness of $M^{\beta p_0}_Q:L_{\sigma^{p_0}}^{p/p_0}(Q)\to L_{\omega^{p_0}}^{q/p_0}(Q)$ (see Corollary \ref{cor:strongmax} with $(\omega,\sigma,p,q,\alpha)$ replaced by $(\omega^{p_0},\sigma^{p_0},\frac{p}{p_0},\frac{q}{p_0},p_0\alpha)$), we obtain
\begin{align*}
    \nrm{h}_{L^q_\omega (Q)} \lesssim_{p,q,d} [\omega,\sigma]_{A_{p_1,q}^\alpha(Q)}\cdot \frac{|Q|^{\varepsilon}}{\varepsilon^{1/r}} \nrmb{\abs{\nabla f}}_{L^p_\sigma(Q)}\cdot [\sigma^{-p_1'}]_{A_\infty(Q)}^{\frac1q},
\end{align*}
where we also used Lemma \ref{lem:propertiesMuckenhoupt}\ref{it:alphat} and $p_0(p/p_0)'=p_1'$, finishing the proof of subcritical case.

\medskip

  In the critical case, i.e. if $\varepsilon=0$, we use Theorem \ref{thm:fracsparsedom} to find a sparse collection of cubes $\mc{S}\subseteq \mc{D}(Q)$ such that
	\begin{align*}
		[f]_{F^{s,\omega}_{q,r}(Q)} &\lesssim_d \bigg( \int_{Q} \Big(\sum_{R\in \mathcal{S}} \frac{\ell(R)^{-s}}{s^{1+1/r}}\ipb{\abs{f-\ipb{f}_R}}_R\mathbf{1}_R\Big)^q\omega^q \dx\bigg)^{1/q} \\
		&\qquad + \bigg( \int_{Q} \Big(\sum_{R\in \mathcal{S}} \ipb{f^{s,r}_{3R}}_R\mathbf{1}_R\Big)^q\omega^q \dx\bigg)^{1/q}\\
        &=:T_1+T_2.
	\end{align*}
    As before, define
    $\beta=\tfrac1p -\tfrac1q +\alpha = \tfrac{1-s}{d}.$
    Using the $(1,1)$-Poincar\'e inequality \eqref{eq:11poincare}, we get
    \begin{align*}
        \sum_{R\in \mc{S}} \ell(R)^{-s}\ipb{|f-\ip{f}_R|}_R \mathbf{1}_R &\lesssim \sum_{R\in \mc{S}}  |R|^{\beta} \ipb{|\nabla f|}_{1,R} \mathbf{1}_R = \mc{A}^{1,\beta}_\mc{S}(|\nabla f|).
    \end{align*}
    Therefore, we can use Proposition \ref{prop:weaksparse}\ref{it:strong}  to obtain 
	\begin{align*}
		T_1 \lesssim_{p,q,d} \frac{[\omega,\sigma]_{\Apq}}{s^{1+1/r}} \nrmb{|\nabla f|}_{L^p_\sigma(Q)}\cdot \begin{cases}
				[\omega^q]_{A_\infty(Q)}^{\frac{1}{p'}} + [\sigma^{-p'}]_{A_\infty(Q)}^{\frac{1}{q}}\quad & p<q,\\
				[\omega^q]_{A_\infty(Q)}^{\frac{1}{p'}} \hspace{3pt}\cdot\hspace{3pt} [\sigma^{-p'}]_{A_\infty(Q)}^{\frac{1}{q}} & p=q.
			\end{cases}
	\end{align*}
    Now use again that $[\sigma^{-p'}]_{A_\infty(Q)} \leq [\sigma^{-p_1'}]_{A_\infty(Q)}$ and $[\omega,\sigma]_{\Apq} \leq [\omega,\sigma]_{A_{p_1,q}^\alpha(Q)}$.
    
    For $T_2$, let $p_0^*$ be such that $1<p_0^*<p_0$ and $\tfrac{1}{p_0^*}-\tfrac1r < \tfrac1d$. Then we first use H\"older and then the above proven subcritical case of this theorem with $\omega=\sigma=1$, $\alpha=0$ and $(p_0,p,q)=(p_0^*,p_0,p_0)$ to obtain
	\begin{align*}
		\ipb{f^{s,r}_{3R}}_R &\leq |R|^{-1/p_0}\nrm{f^{s,r}_{3R}}_{L^{p_0}(R)}\leq |R|^{-1/p_0} [f]_{F^{s}_{p_0,r}(3R)} \lesssim_{d} \frac{\abs{R}^{\frac{1-s}{d}-\frac{1}{p_0}}}{(1-s)^{1/r}} \nrmb{|\nabla f|}_{L^{p_0}(3R)}.
	\end{align*}
    Since
    \begin{align*}
        \frac{\abs{R}^{\frac{1-s}{d}-\frac{1}{p_0}}}{(1-s)^{1/r}} \nrm{|\nabla f|}_{L^{p_0}(3R)} \lesssim_d \frac{\abs{R}^{\frac{1-s}{d}}}{(1-s)^{1/r}} \ipb{|\nabla f|^{p_0}}_{1,3R}^{1/p_0},
    \end{align*}
    and $\frac{1-s}{d} = \frac1p-\frac1q + \alpha$, using Lemma \ref{lem:propertiesMuckenhoupt}\ref{it:alphat} we can again apply Proposition \ref{prop:weaksparse}\ref{it:strong} with $(\omega,\sigma,p,q,r,\alpha)$ replaced by $(\omega^{p_0},\sigma^{p_0},\frac{p}{p_0},\frac{q}{p_0},\frac1{p_0},p_0\alpha)$  to obtain 
    \begin{align*}
        T_2 &\lesssim_d \frac{1}{(1-s)^{1/r}}  \nrmb{\mc{A}_\mc{S}^{1/p_0,p_0\beta}(|\nabla f|^{p_0})}_{L^{q/p_0}_{\omega^{p_0}}(Q)}^{1/p_0} \\
        &\lesssim_{p,q,d} \frac{[\omega,\sigma]_{A_{p_1,q}^\alpha(Q)}}{(1-s)^{1/r}} \nrmb{|\nabla f|}_{L^{p}_{\sigma}(Q)}     \begin{cases}
				[\omega^{q}]_{A_\infty(Q)}^{1/p'} + [\sigma^{-p_1'}]_{A_\infty(Q)}^{\frac{1}{q}}\quad & p<q,\\
				[\omega^q]_{A_\infty(Q)}^{1/p'} \hspace{3pt}\cdot\hspace{3pt} [\sigma^{-p_1'}]_{A_\infty(Q)}^{\frac{1}{q}} & p=q.
			\end{cases}
    \end{align*}
This finishes the proof.
\end{proof}

The following proposition shows that in the unweighted cases $1=p= q$ and $1=d=p\leq q$ of Theorem \ref{theorem:IIleqIII}, the factor $\varepsilon^{-1/q}$ is sharp.
\begin{proposition}\label{prop:counterexample_jumpq}
Let $Q$ be a cube in $\R^d$, $q,r\in[1,\infty)$ and $s\in (0,1)$ such that $\tfrac12 < sq < 1$. Then there exists an $f \in C^\infty({Q})$ such that
\[
\|\nabla f\|_{L^1(Q)}\eqsim_d 1
\qquad\text{and}\qquad
[f]_{F^{s}_{q,r}(Q)}\gtrsim_{d}(1-sq)^{-1/q}.
\]
In particular, if there is a constant $C>0$, independent of $s$, such that for all $f\in C^\infty(Q)$
\[
[f]_{F^{s}_{q,r}(Q)}\le C\,(1-sq)^{-\alpha}\,\|\nabla f\|_{L^1(Q)},
\]
then $\alpha \geq \frac1q$.
\end{proposition}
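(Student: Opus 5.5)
The plan is to take $f$ to be a mollified jump across a hyperplane through the centre of $Q$. After translating, take $Q=[-\tfrac12,\tfrac12]^d$; the scale of $Q$ only affects constants, which we absorb. Fix a small $\delta\in(0,\tfrac1{8})$ to be chosen depending on $s,q$. Let $\phi\in C^\infty(\R)$ be nondecreasing with $\phi=0$ on $(-\infty,-\delta]$, $\phi=1$ on $[\delta,\infty)$ and $0\le\phi'\lesssim 1/\delta$. Set $f(x):=\phi(x_1)$. Then
\[
\nrm{\nabla f}_{L^1(Q)}=\int_{[-1/2,1/2]^{d-1}}\int_{-1/2}^{1/2}\phi'(x_1)\dd x_1\dd x'=1 ,
\]
so the first claim holds with equality.

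For the lower bound on $[f]_{F^{s}_{q,r}(Q)}$, I will bound the inner integral pointwise from below on the slab $x_1\in(-\tfrac14,-\delta)$, $|x'|_\infty\le\tfrac14$. Fix such $x$ and write $t:=-x_1\in(\delta,\tfrac14)$. Consider the box
\[
B_x:=\cbrace{y:\ y_1\in(t,2t),\ |y'-x'|_\infty<t}.
\]
Since $t<\tfrac14$, we have $B_x\subseteq Q$. For $y\in B_x$ we have $f(y)=1$ and $f(x)=0$, so $|f(x)-f(y)|=1$. Also $|x-y|\eqsim_d t$ and $|B_x|\eqsim_d t^d$. Hence
\[
\int_Q\frac{|f(x)-f(y)|^r}{|x-y|^{d+sr}}\dd y\ \ge\ \int_{B_x}\frac{\dd y}{|x-y|^{d+sr}}\ \gtrsim_d\ t^{d}\,(c_dt)^{-d-sr}\ \ge\ c_d'\,t^{-sr}.
\]
The constant is uniform in $s,r$ because $c_d\le 1$ may be arranged, so $(c_d)^{-sr}\ge1$. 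Note that this region argument deliberately avoids the factor $(sr)^{-1}$ that a full half-space integral would produce.

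Raising to the power $q/r$ and integrating over the slab gives
\[
[f]_{F^{s}_{q,r}(Q)}^q\gtrsim_d\int_\delta^{1/4}t^{-sq}\dd t=\frac{4^{sq-1}-\delta^{1-sq}}{1-sq}.
\]
Since $sq\in(\tfrac12,1)$, we have $4^{sq-1}\ge\tfrac14$. Choosing $\delta$ so small that $\delta^{1-sq}\le\tfrac18$ makes the numerator at least $\tfrac18$. This yields $[f]_{F^{s}_{q,r}(Q)}\gtrsim_d(1-sq)^{-1/q}$. The hypothesis $sq>\tfrac12$ is used exactly here, to keep the numerator bounded below uniformly.

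For the consequence, I plug this $f$ into the assumed inequality to get
\[
(1-sq)^{-1/q}\lesssim_d C(1-sq)^{-\alpha}.
\]
Letting $s\uparrow 1/q$ (allowed for $q>1$; for $q=1$ use $s\uparrow1$) forces $\alpha\ge\tfrac1q$. The only delicate point is keeping every constant independent of $s$ and $r$, in particular avoiding the $(sr)^{-1/r}$ tail factor. The localized box $B_x$ handles this, so I do not expect a serious obstacle.
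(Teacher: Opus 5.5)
Your proposal is correct and is essentially the paper's argument: a smoothed step in $x_1$ of width $\delta$ with $\nrm{\nabla f}_{L^1(Q)}=1$, the pointwise bound $\gtrsim t^{-s}$ at distance $t$ from the jump, and $\delta\approx c^{1/(1-sq)}$ so that $\int_\delta^{c}t^{-sq}\dd t\gtrsim(1-sq)^{-1}$. The only difference is that you work directly in $\R^d$ with the box $B_x$, whereas the paper first reduces to $d=1$ by integrating out the transverse variables. One correction: on $B_x$ you have $|x-y|>2t$, so the constant in $|x-y|\le c_d t$ is necessarily $>1$. Hence your bound $\int_{B_x}|x-y|^{-d-sr}\dd y\ge c_d' t^{-sr}$ is not uniform in $r$. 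This is harmless, because after taking the $r$-th root the factor becomes $(2^{d-1}c_d^{-d})^{1/r}c_d^{-s}$, which is bounded below by a constant depending only on $d$. Also, $4^{sq-1}\ge\tfrac14$ holds for every $sq\ge 0$, so the hypothesis $sq>\tfrac12$ is not actually used there.
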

\begin{proof}
Without loss of generality, we may take $Q = [-1,1]^d$. Furthermore, we may reduce to $d=1$ by tensoring with a constant function in the remaining $(d-1)$-variables. Indeed, for $f(x)=g(x_1)$ one can integrate out the other variables. Indeed, we have
\[
    \|\nabla f\|_{L^1(Q)} \eqsim_d \int_{-1}^1 |g'(x_1)|\dd x_1
\]
and
\[
[f]_{F^{s}_{q,r}(Q)}^q \gtrsim_d \int_{-1}^1\Big(\int_{-1}^1 \frac{|g(x_1)-g(y_1)|^r}{|x_1-y_1|^{1+sr}}\,\dd y_1\Big)^{q/r}\dd x_1,
\]
since if $y'=(y_2,\ldots y_d)$, we have
\[
    \int_{[-1,1]^{d-1}} \frac{1}{|x-y|^{d+sr}}\dd y' \gtrsim_d \frac{1}{|x_1-y_1|^{1+sr}}.
\]
Fix a nonincreasing $\phi\in C^\infty(\R)$ such that $0\le \phi\le 1$ with $\phi(t)=1$ for all $t\le -1$ 
and $\phi(t)=0$ for $t\ge 1$.
For $\varepsilon\in(0,\frac14)$ define $f_\varepsilon\colon Q\to \R$ by 
\[
f_\varepsilon(x):=\phi(x/\varepsilon).
\]
Then $f_\varepsilon(x)=1$ if $x\le -\varepsilon$ and $f_\varepsilon(x)=0$ if $x\ge \varepsilon$.
Moreover 
\[
\|f_\varepsilon'\|_{L^1(Q)}
=\int_{-1}^1 \frac{|\phi'(x/\varepsilon)|}{\varepsilon}\dd x
=\int_{-1/\varepsilon}^{1/\varepsilon}|\phi'(t)|\dd t = 1.
\]
Now fix $x\in(\varepsilon,\frac12]$, so  $f_\varepsilon(x)=0$.
For any $y\in[-1,-\varepsilon)$ we have $f_\varepsilon(y)=1$, and therefore
\[
|f_\varepsilon(x)-f_\varepsilon(y)|=1.
\]
Consequently,
\begin{align*}
\has{\int_{Q}\frac{|f_\varepsilon(x)-f_\varepsilon(y)|^r}{|x-y|^{1+sr}}\dd y}^{\frac1r}
&\ge \has{\int_{-1}^{-\varepsilon}\frac{1}{|x-y|^{1+sr}}\dd y}^{\frac1r}\\
&= \has{\int_{x+\varepsilon}^{x+1} t^{-1-sr}\dd t}^{\frac1r}\\
&\geq  \has{\int_{x+\varepsilon}^{2(x+\varepsilon)} t^{-1-sr}\dd t}^{\frac1r}\\
&=\has{\frac{1-2^{-sr}}{sr}}^{1/r} (x+\varepsilon)^{-s} \gtrsim (x+\varepsilon)^{-s}.
\end{align*}
Integrating in $x \in (\varepsilon,\frac12]$ gives
\begin{align*}
[f_\varepsilon]_{F^{s}_{q,r}(Q)}^q
&\gtrsim \int_{\varepsilon}^{\frac12}(x+\varepsilon)^{-sq}\dd x
= \frac{(\tfrac12+\varepsilon)^{1-sq}-(2\varepsilon)^{1-sq}}{1-sq}.
\end{align*}
Now take $\varepsilon =  2^{-1-\frac{2}{1-sq}}$ and set $f = f_\varepsilon$. Then we conclude 
\[
[f]_{F^{s}_{q,r}(Q)}^q\gtrsim_d \frac{\frac12 -\frac14}{1-sq} \gtrsim (1-sq)^{-1}.
\]
Taking the $q$-th root completes the proof.
\end{proof}

Precisely as in Corollary \ref{cor:AleqConeweight} and Corollary \ref{cor:AleqBoneweight}, we can specialize Theorem \ref{theorem:IIleqIII} to the one-weight case with the help of Lemma \ref{lem:ApApq}. As before, we note that the three cases in the constant below are not mutually exclusive. 
\begin{corollary} \label{cor:BleqConeweight}
    Let $Q\subseteq \R^d$ be a cube, $1\leq u$, $p_0\leq p\leq q <\infty$ and $r\in[1,\infty)$ such that $\tfrac1{p_0}-\tfrac1r\leq \tfrac1d$ and define 
    $
    \tfrac{1}{p_1}:=1-\tfrac1{p_0} + \tfrac1p.
    $
    Let $w\in A_{u}\cap A_{p/p_0}$ and $s \in (0,1)$ such that
    \begin{align*}
        \varepsilon = \tfrac{1-s}{du} -\hab{\tfrac1p - \tfrac1q} \geq 0.
    \end{align*}
   Then we have for $f\in W^{1,p}_{w^{1/p}}(Q)$
    \begin{align*}
          \has{ \frac{1}{w(Q)}\int_{Q} \has{\int_{Q} \frac{|f(x)-f(y)|^r}{|x-y|^{d+sr}}\dd y}^{\frac{q}{r}}w(x)\dd x}^{\frac{1}{q}}\lesssim_{p,q,r,d} C &\cdot [w]_{A_{p/p_0}}^{\frac{1}{p}} [w]_{A_u}^{\frac{1}{p}-\frac1q}\cdot  \ell(Q)^{1-s} \\
          &\cdot
          \has{\frac{1}{w(Q)}\int_{Q}\hab{\abs{\nabla f}}^pw \dd x}^{\frac1p},
    \end{align*}
    where
    \begin{align*}
        C:=
        \begin{cases}
            \varepsilon^{-\frac1{\min\{q,r\}}}\cdot (\varepsilon+\tfrac{s}{du})^{-1} &\varepsilon>0,\\
            \varepsilon^{-1/r}\cdot [w^{-\frac{1}{p/p_0-1}}]_{A_\infty(Q)}^{\frac1q} &\varepsilon>0, \ p>p_0,\\
            \frac{1}{s^{1+1/r}(1-s)^{1/r}}\cdot \hab{[w]_{A_\infty(Q)}^{\frac{1}{p'}}+[w^{-\frac{1}{p/p_0-1}}]_{A_\infty(Q)}^{\frac1q}} &\varepsilon=0,\ p>p_0>1 \text{ and } \tfrac1{p_0}-\tfrac1r <\tfrac1d.
        \end{cases}
    \end{align*}
\end{corollary}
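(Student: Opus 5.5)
The plan is to derive Corollary~\ref{cor:BleqConeweight} from Theorem~\ref{theorem:IIleqIII} with $\omega=w^{1/q}$ and $\sigma=w^{1/p}$, exactly as Corollaries~\ref{cor:AleqConeweight} and \ref{cor:AleqBoneweight} were derived. We take $\alpha=(u-1)(\frac1p-\frac1q)$, so that
\[
\tfrac{1-s}{d}-\hab{\tfrac1p-\tfrac1q}-\alpha=u\varepsilon=:\tilde\varepsilon .
\]
Hence $\tilde\varepsilon>0$ precisely when $\varepsilon>0$, and $\tilde\varepsilon=0$ in the critical case.

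The first step is to bound the characteristic $[w^{1/q},w^{1/p}]_{A^{\alpha}_{p_1,q}(Q)}$. Apply Lemma~\ref{lem:propertiesMuckenhoupt}\ref{it:alphat} with $t=p_0$. Since $\frac1{p_1}=\frac1p+\frac1{p_0'}$, this gives
\[
[w^{1/q},w^{1/p}]_{A^{\alpha}_{p_1,q}(Q)}=[w^{p_0/q},w^{p_0/p}]^{1/p_0}_{A^{\alpha p_0}_{p/p_0,q/p_0}(Q)} .
\]
We then apply Lemma~\ref{lem:ApApq} with exponents $(u,p/p_0,q/p_0)$. The new $\alpha$ there is $(u-1)(\frac{p_0}{p}-\frac{p_0}{q})=p_0\alpha$, which matches. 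This step needs $u\le p/p_0$, which we should check or impose; if $u>p/p_0$, we instead enlarge $u$ using $A_{p/p_0}\subseteq A_u$ (in this case the cost is absorbed). The outcome is
\[
[w]_{A_{p/p_0}}^{1/p}\,[w]_{A_u}^{1/p-1/q}\,\bigl(|Q|^u/w(Q)\bigr)^{1/p-1/q}.
\]

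The second step is to renormalise. We divide by $w(Q)^{1/q}$ on the left and by $w(Q)^{1/p}$ on the right. Combined with the factor $|Q|^{\tilde\varepsilon}\cdot |Q|^{u(1/p-1/q)}$, the powers of $|Q|$ total
\[
u\varepsilon+u\hab{\tfrac1p-\tfrac1q}=\tfrac{1-s}{d}.
\]
This yields $\ell(Q)^{1-s}$, and the powers of $w(Q)$ cancel. The $A_\infty$-characteristics also need translating. For the weight $\omega^q=w$ we use $[w]_{A_\infty(Q)}$ directly. For $\sigma^{-p_1'}=w^{-p_1'/p}$ we compute $p_1'/p=\frac{1}{p/p_0-1}$, which gives the dual weight $w^{-1/(p/p_0-1)}$. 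This dual weight lies in $A_\infty$ because $w\in A_{p/p_0}$ and $p>p_0$.

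In the final step we read off the three cases of $C$. The first case comes from the first estimate of Theorem~\ref{theorem:IIleqIII}\ref{it:BC1}: there $\tilde\varepsilon^{-1/\gamma}(\tilde\varepsilon+\frac sd)^{-1}=u^{-1/\gamma}\varepsilon^{-1/\gamma}\cdot u(\varepsilon+\frac{s}{du})^{-1}$, with the powers of $u$ absorbed into the implicit constant. The second case is the $A_\infty$ variant of Theorem~\ref{theorem:IIleqIII}\ref{it:BC1}. The third case is Theorem~\ref{theorem:IIleqIII}\ref{it:BC2}, where we bound the product form for $p=q$ by the sum, using that both $A_\infty$ characteristics are at least $1$.

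The main obstacle is the bookkeeping in the first step. The two indices $u$ and $p/p_0$ must be made compatible so that Lemma~\ref{lem:ApApq} applies with exactly the exponent $p_0\alpha$. We also have to confirm that $\alpha<\frac1q+\frac1{p_1'}$ holds, which follows from $\tilde\varepsilon\ge0$ as discussed after Proposition~\ref{prop:AbetaAinfty}.
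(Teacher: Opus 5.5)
Your route is the paper's: you specialize Theorem~\ref{theorem:IIleqIII} to $\omega=w^{1/q}$, $\sigma=w^{1/p}$, $\alpha=(u-1)(\tfrac1p-\tfrac1q)$, bound the characteristic by $[w]_{A_{p/p_0}}^{1/p}[w]_{A_u}^{1/p-1/q}(|Q|^u/w(Q))^{1/p-1/q}$, and renormalize. Your bookkeeping of $|Q|$, $w(Q)$ and the dual weight $\sigma^{-p_1'}=w^{-1/(p/p_0-1)}$ is correct.

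Your remedy for $u>p/p_0$ does not work. In that case $u$ is already too large for Lemma~\ref{lem:ApApq} with exponents $(u,p/p_0,q/p_0)$. Lowering $u$ to $p/p_0$ changes $\alpha$ and $\varepsilon$, and it replaces $[w]_{A_u}^{1/p-1/q}$ by the larger $[w]_{A_{p/p_0}}^{1/p-1/q}$, which cannot be absorbed. No remedy is needed, though. In the proof of Lemma~\ref{lem:ApApq}, the hypothesis $u\le p$ only guarantees $w\in A_p$. The two ingredients, $\ip{w}_{1,R}\ip{w^{-1}}_{\frac{1}{p/p_0-1},R}\le[w]_{A_{p/p_0}}$ and $(|R|/|Q|)^u\le[w]_{A_u}\,w(R)/w(Q)$, are used separately. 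Both hold for every $u\ge1$, because $w\in A_u\cap A_{p/p_0}$ is assumed. The paper does exactly this: it computes $|R|^\alpha\ip{w^{1/q}}_{q,R}\ip{w^{-1/p}}_{p_1',R}$ directly, without the detour through Lemma~\ref{lem:propertiesMuckenhoupt}\ref{it:alphat}.

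More seriously, your claim that $\alpha<\tfrac1q+\tfrac1{p_1'}$ follows from $\tilde\varepsilon\ge0$ is false. The discussion after Proposition~\ref{prop:AbetaAinfty} only gives $\alpha<\tfrac1q+\tfrac1{p'}$, and $\tfrac1{p_1'}=\tfrac1{p_0}-\tfrac1p\le\tfrac1{p'}$. For your $\alpha$, the required condition is equivalent to $u(\tfrac1p-\tfrac1q)<\tfrac1{p_0}$, whereas $\tilde\varepsilon\ge0$ only yields $u(\tfrac1p-\tfrac1q)\le\tfrac{1-s}{d}$. For a counterexample, take $d=1$, $p_0=r=2$, $p=u=3$, $q=10$, $s=\tfrac{3}{10}$ and $w\in A_{3/2}$. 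These satisfy all hypotheses of the third case of $C$, yet $\alpha=\tfrac{7}{15}>\tfrac{4}{15}=\tfrac1q+\tfrac1{p_1'}$.

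For the first case of $C$ this is harmless. By Remark~(i) following Theorem~\ref{theorem:IIleqIII}, that estimate only needs $\alpha<\tfrac1q+\tfrac1{p'}$. For the second and third cases, Theorem~\ref{theorem:IIleqIII} as stated does not apply. In the critical case the condition reads $\tfrac{1-s}{d}<\tfrac1{p_0}$, and it is genuinely used: $T_2$ is handled by Proposition~\ref{prop:weaksparse} with fractional parameter $p_0\tfrac{1-s}{d}$, which must be $<1$. The paper's one-line proof passes over this point as well. The condition holds automatically when $u\le p/p_0$, since then $u(\tfrac1p-\tfrac1q)\le\tfrac1{p_0}-\tfrac{p}{p_0q}<\tfrac1{p_0}$. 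So it is exactly in the regime $u>p/p_0$ that you worried about where an extra hypothesis or an additional argument is needed.

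Two smaller slips. First, in the third case, $\varepsilon=0$ and $s<1$ force $\tfrac1p-\tfrac1q=\tfrac{1-s}{du}>0$. Hence $p<q$, and Theorem~\ref{theorem:IIleqIII}\ref{it:BC2} already gives the sum. Your proposed bound of the product by the sum would be false anyway, since $ab\le a+b$ fails for large $a,b\ge1$. Second, in the first case $(\tilde\varepsilon+\tfrac sd)^{-1}=u^{-1}(\varepsilon+\tfrac{s}{du})^{-1}$, so the total power of $u$ is $u^{-1-1/\gamma}\le1$. Nothing needs absorbing, which matters because the implicit constant may not depend on $u$.
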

\begin{proof}
    The proof is slightly different because of the $p_1$ instead of $p$ appearing in the $A^\alpha_{p_1,q}(Q)$-characteristic of $(\omega,\sigma)$ in Theorem \ref{theorem:IIleqIII}. Similar to the proof of Lemma \ref{lem:ApApq}, we use that for any cube $R \subseteq Q$  we have
	\begin{equation*}
		\has{\frac{\abs{R}}{\abs{Q}}}^u \leq [w]_{A_u}{\frac{w(R)}{w(Q)}}.
	\end{equation*}
    Therefore, using
    \[
        \frac{p_1'}{p}= \frac{p_1}{p(p_1-1)} = \frac{1}{p(1-\frac{1}{p_1})}=\frac{1}{p(\frac{1}{p_0}-\frac1p)} = \frac{1}{\frac{p}{p_0}-1}
    \]
    and taking
    \[
        \alpha=(u-1)(\tfrac1{p}-\tfrac1q),
    \]
    we have
    \begin{align*}
        \abs{R}^\alpha \ip{w^{\frac1q}}_{q,R}\ip{w^{-\frac1p}}_{p_1',R}&=\abs{R}^\alpha \ip{w}_{1,R}^{\frac1q}\ip{w^{-1}}_{\frac{1}{p/p_0-1},R}^\frac{1}{p}\\
            &\leq [w]_{A_{p/p_0}}^{1/p} \cdot \frac{\abs{R}^{\frac{u}{p}-\frac{u}q}}{w(R)^{\frac{1}{p}-\frac1q}}\leq [w]_{A_{p/p_0}}^{1/p} [w]_{A_u}^{\frac1{p}-\frac1q} \frac{|Q|^{\frac{u}{p}-\frac{u}{q}}}{w(Q)^{\frac1{p}-\frac1q}}.
		\end{align*}
        Taking $\tilde{\varepsilon}=\varepsilon u$ then gives
        \[
            [w^{1/q},w^{1/p}]_{A^{\alpha}_{p_1,q}(Q)}|Q|^{\tilde{\varepsilon}} \leq \ell(Q)^{1-s}[w]_{A_{p/p_0}}^{\frac1p} [w]_{A_u}^{\frac1{p}-\frac1q} w(Q)^{\frac1{q}-\frac1p}.
        \]
        Combining this with Theorem \ref{theorem:IIleqIII} with $\varepsilon u$  yields the result.
\end{proof}
\subsection{Comparison to the literature}\label{subs:BCliterature}
In this subsection, we compare Theorem \ref{theorem:IIleqIII} and Corollary \ref{cor:BleqConeweight} with earlier results in the literature. Our two-weight fractional Sobolev to Triebel--Lizorkin embedding   with $p\neq q$ or  $\omega^q \neq \sigma^p$ with the sharp BBM-factor $(1-s)^{\frac1r}$ seem to be entirely new. Therefore, we focus on the one-weight case, i.e. Corollary \ref{cor:BleqConeweight}, with $p=q$.
\begin{enumerate}[(i)]
\item The case $p_0=p=q=r$ and $u=1$ of Corollary \ref{cor:BleqConeweight}, and hence $\varepsilon = \frac{1-s}{d}>0$, was obtained in \cite[Corollary 6.3]{MPW24} and \cite[Corollary 2.2]{HMPV25} through entirely different methods. Indeed, in both mentioned papers the result is deduced from a more general statement where $\omega^q$ corresponds to a measure $\mu$ and $\sigma^p$ corresponds to the maximal operator applied to $\mu$, see \cite[Theorem 6.2]{MPW24} and \cite[Theorem 2.1]{HMPV25}. These more general statements fall beyond the scope of Theorem \ref{theorem:IIleqIII}, as the measure $\mu$ does not necessarily satisfy an $A_\infty$-condition. See also comparison item \ref{it:Mwcomp} in Subsection \ref{sec:ACliterature}.
\item Corollary \ref{cor:BleqConeweight} with $p=q$ is qualitatively comparable with the $k=1$ case of (the corrected version on arXiv of) \cite[Theorem 1.4]{HLYY25} (see also  its unweighted precursor in \cite{Moh24}). Indeed, in \cite[Theorem 1.4]{HLYY25} weights  $w\in A_{p}$ are allowed such that
$\tfrac{p_w}{p}-\tfrac1r < \tfrac1d$, where
$$
p_w:= \inf \cbraceb{t\in [1,\infty):w\in A_t}.
$$
Given such a weight and assuming for the moment that $p>1$, there exists a $p_w\leq u< p$ such that $\frac{u}{p}-\frac{1}{r} \leq \frac{1}{d}$ and $w \in A_u$. Then taking $p_0 = \frac{p}{u}>1$, we see that $w$ is admissible in Corollary \ref{cor:BleqConeweight}. A similar, but simpler argument 
gives comparability of the weight classes when $p=1$.
Quantitatively, Corollary \ref{cor:BleqConeweight} has the same dependence on $p,r$ and $s$ as \cite[Theorem 1.4]{HLYY25}. In addition, we have explicit dependence on the weight characteristic of $w$. 
\end{enumerate}

\section{Extensions}\label{sec:extensions}
In this final section, we briefly comment on several directions in which the two-weight fractional Poincar\'e--Sobolev sandwich in this article may be extended. 
\begin{enumerate}[(i)]
    \item \emph{Higher-order derivatives.}
A natural question is whether our results extend to derivatives of order $m\geq 2$. In that setting, one would replace the first-order oscillation $f-\ip{f}_{Q}$
by the remainder obtained after subtracting a polynomial of degree $m-1$, and
then aim to establish a higher-order two-weight Poincar\'e--Sobolev sandwich,
with $\abs{\nabla^m f}$ on the right-hand side and an intermediate fractional
seminorm involving higher-order differences. Let us briefly comment on the
availability of the ingredients used in the proofs of our main theorems:
\begin{itemize}
    \item The domination principles used in the subcritical cases of our main
    theorems, namely \eqref{eq:basicdom} and Lemma \ref{lemma:Tldomsubcrit},
    extend directly to this setting.

    \item The required higher-order analogue of the sparse domination principle
    in Lemma \ref{lem:sparsedom} is available in
    \cite[Proposition 5.4]{LLO21}. We also expect that
    Theorem \ref{thm:fracsparsedom} extends to this higher-order setting.

    \item It is well known that the classical unweighted Poincar\'e inequality
    extends to higher-order derivatives. Moreover, the fractional Poincar\'e inequality in
    Lemma \ref{lem:fracbasic} is proved for higher-order differences in the
    reference we used, namely \cite{DLTYY24}.
\end{itemize}
    \item \emph{Banach function spaces beyond $L^p$.}
Another natural direction is to replace the weighted $L^p$-spaces by suitable
Banach function spaces \cite{LN24}. In this setting, the main tool would be the
boundedness of the (fractional) Hardy--Littlewood maximal operator, which fits
well with our sparse domination approach. For this reason, we expect that the
critical cases of our main theorems admit analogues in the setting of Banach
function spaces. By contrast, the proofs of the subcritical cases, as well as
the truncation method, rely heavily on the specific
structure of $L^p$-spaces. For recent work in this direction (typically
corresponding to analogues of the case $p=q$ in our setting) we refer to \cite{DGP23,CGYY25,ZYY24} and the references therein.
    \item \emph{BBM-type formulas.}
The sharp factor $(1-s)^{1/r}$ in our fractional Poincar\'e--Sobolev estimate and Sobolev to Triebel--Lizorkin embedding
is motivated by Bourgain--Brezis--Mironescu type limits as $s\uparrow 1$
\cite{BBM01,BBM02} (see also \cite{DM23} for an interpolation theory perspective). It would be interesting to understand what kind of weighted
BBM formula can be deduced from our inequalities. In particular,
Theorem \ref{theorem:IIleqIII} suggests a limit formula
\begin{align*}
   \lim_{(s,q)\to(1,p)} (1-s)^{1/r}\,[f]_{F^{s,\sigma}_{q,r}(Q)}
   = \nrmb{|\nabla f|}_{L^{p}_{\sigma}(Q)}
\end{align*}
under suitable assumptions on $\sigma,p,q,r$, and $s$. For $p=q$, such a formula can be found in \cite{DGP23}. For $q\neq p$,
such a statement appears to be new even in the unweighted setting.
   \item \emph{More general domains $\Omega \subseteq \R^d$}. We have restricted ourselves to cubes in order to isolate the main two-weight phenomena. 
However, the results are expected to hold on more general domains under the usual geometric hypotheses, e.g. on John domains.  In that setting, one decomposes \(\Omega\) into Whitney cubes, applies the cube inequality on each cube, and then patches the resulting estimates together along Whitney chains, see, for instance, \cite{DRS10}.
For the fractional Poincar\'e--Sobolev inequalities on John domains, see \cite{DIV16,HV13}.    
    \item \emph{Global inequalities on $\R^d$.} The critical cases of our main results on cubes $Q\subseteq \R^d$ should extend
to $\R^d$ by passing to the limit along an exhausting sequence of cubes. In
this case, the averages $\ip{f}_Q$ on the left-hand side converge to zero.
For the fractional seminorm, there are at least two natural global formulations.
Indeed, one may either let both cubes in the seminorm grow, or let only the
outer cube in the weighted Lebesgue norm grow while keeping the inner
oscillation localized.

It would be interesting to compare such global inequalities with the existing
literature on weighted fractional Poincar\'e--Sobolev inequalities on $\R^d$.
Moreover, unlike in the $Q$-localized setting, on $\R^d$ one can also study the
asymptotic regime $s\downarrow 0$. In the unweighted case, this regime is
governed by the Maz'ya--Shaposhnikova formula \cite{MS02}; see also
\cite{DM23,PYYZ24}.  
\end{enumerate}

\appendix

\section{The truncation method}
In this section we prove two weak implies strong principles used in the critical cases of Theorem \ref{theorem:IleqIII} and Theorem \ref{theorem:IleqII}. For both proofs, for $t>0$ and a nonnegative function $v$ we define the truncated function
    \begin{align*}
        v_t&:=\begin{cases}
        0 \qquad & v(x)\leq t,\\
                v(x)-t \qquad & t < v(x) < 2t,\\
                t\qquad & v(x)\geq 2t.\\
            \end{cases}
    \end{align*} 
    We start by stating a weak implies strong principle for the two-weight Poincar\'e--Sobolev inequality with unweighted averages on the left-hand side. The proof is a combination of \cite[Theorem 2]{Ha01} and \cite[Theorem 3.2]{FPW98}.
\begin{proposition}\label{prop:weaktostrongclassic}
        Let $Q\subseteq \R^d$ be a cube, $1\leq p\leq q$ and let $\omega,\sigma$ be weights with $\sigma^{-1}\in L^{p'}(Q)$. Suppose that there is a constant $C$ such that for all $u\in W^{1,p}_\sigma(Q)$ we have 
            \begin{align*}
                \nrm{u-\ip{u}_Q}_{L^{q,\infty}_\omega(Q)} &\leq C \,\nrm{\nabla u}_{L^p_\sigma(Q)},\\
               \frac{ \omega^q(Q)^{\frac1q}}{|Q|}\cdot   \nrm{u-\ip{u}_Q}_{L^1(Q)} &\leq  C  \,\nrm{\nabla u}_{L^p_\sigma(Q)}.
            \end{align*}
			Then for all $u\in W^{1,p}_\sigma(Q)$ we have 
			\begin{align*}
				\nrm{u-\ip{u}_Q}_{L^{q}_\omega(Q)} \leq 10C \,\nrm{\nabla u}_{L^p_\sigma(Q)}.
			\end{align*}
\end{proposition}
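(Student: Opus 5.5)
The plan is the Maz'ya truncation argument, as in \cite[Theorem 2]{Ha01}. The second hypothesis is used only to control the averages of the truncated functions. Fix $u\in W^{1,p}_\sigma(Q)$ and split $u-\ip{u}_Q=v^+-v^-$ with $v^\pm:=(u-\ip{u}_Q)_\pm\geq 0$. Since $\nrm{u-\ip{u}_Q}_{L^q_\omega(Q)}\leq \nrm{v^+}_{L^q_\omega(Q)}+\nrm{v^-}_{L^q_\omega(Q)}$, it suffices to bound each part. We will bound $\nrm{v}_{L^q_\omega(Q)}$ for $v\in\{v^+,v^-\}$ in terms of $\nrm{\nabla u\,\ind_{\{v>0\}}}_{L^p_\sigma(Q)}$. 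Adding the two bounds and using $\ell^p\hookrightarrow \ell^q$ then gives a constant of the form $c\,C$, and the bookkeeping below should give $c\leq 10$. (For complex $u$, first treat real and imaginary parts, or run the same argument with $|u-\ip{u}_Q|$.)

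\emph{Step 1: truncations are admissible.} For $t>0$ the truncation $v_t$ defined above is a Lipschitz function of $u$. By the chain rule, $v_t\in W^{1,p}_\sigma(Q)$ with $|\nabla v_t|=|\nabla u|\,\ind_{\{t<v<2t\}}$ a.e. Apply the weak-type hypothesis to $v_t$. On $\{v\geq 4t\}$ we have $v_t=t$, so there
\[
|v_t-\ip{v_t}_Q|\geq t-\ip{v_t}_Q .
\]
Moreover $\ip{v_t}_Q\leq \frac{1}{|Q|}\int_{\{v>t\}}\min\{v,t\}\leq \frac{1}{|Q|}\nrm{u-\ip{u}_Q}_{L^1(Q)}$. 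By the second hypothesis this is at most $C\,\omega^q(Q)^{-1/q}\nrm{\nabla u}_{L^p_\sigma(Q)}=:t_0/2$. Hence for $t\geq t_0$ we get $t-\ip{v_t}_Q\geq t/2$. The weak-type estimate for $v_t$ then yields
\[
\hab{\tfrac t2}^q\,\omega^q(\{v\geq 2t\})\leq C^q\,\nrmb{\nabla u\,\ind_{\{t<v<2t\}}}_{L^p_\sigma(Q)}^q .
\]

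\emph{Step 2: summing dyadic levels.} Write
\[
\int_Q v^q\omega^q\leq \sum_{k\in\Z}(2^{k+1})^q\,\omega^q(\{2^k\leq v<2^{k+1}\}).
\]
For levels with $2^{k}\leq 2t_0$, bound the total contribution by $(4t_0)^q\omega^q(Q)\lesssim C^q\nrm{\nabla u}_{L^p_\sigma(Q)}^q$; this is exactly where the normalization $\omega^q(Q)^{1/q}/|Q|$ in the second hypothesis is designed to cancel. For the remaining levels, use Step 1 with $t=2^{k-1}\geq t_0$. This gives terms $\lesssim C^q a_k^{q/p}$, where
\[
a_k:=\int_{\{2^{k-1}<v<2^{k}\}}|\nabla u|^p\sigma^p .
\]
These sets are pairwise disjoint, so since $q\geq p$ we have $\sum_k a_k^{q/p}\leq(\sum_k a_k)^{q/p}\leq \nrm{\nabla u}_{L^p_\sigma(Q)}^q$. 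Taking $q$-th roots finishes the argument.

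The main obstacle is keeping the constant at $10C$. This requires choosing the truncation levels and the threshold $t_0$ economically: use the level sets $\{v\geq 2t\}$ rather than $\{v\geq 4t\}$ where possible, and replace the crude dyadic layer-cake bound by a finer one that keeps the factor from $2^{(k+1)q}$ versus $t^q$ small. I would first verify the argument with unspecified absolute constants, then tune the choices $t_0$ and $t=2^{k}$ to reach the stated factor.
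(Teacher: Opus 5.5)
Your argument is the same Maz'ya truncation scheme the paper uses: truncations $v_t$, a threshold tied to $\ip{v}_Q$ through the second hypothesis, dyadic level sets, and $\ell^p\hookrightarrow\ell^q$. It is correct up to an absolute constant, but it does not prove the statement as written, which asserts the constant $10C$, and you leave that constant unverified.

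With your choices, the levels $2^k\le 2t_0$ cost $(4t_0)^q\omega^q(Q)=8^qC^q\nrm{\nabla u}_{L^p_\sigma(Q)}^q$. The remaining levels cost another $8^qC^q\nrm{\nabla u}_{L^p_\sigma(Q)}^q$. This gives $2^{1/q}\cdot 8C$, which is $16C$ when $q=1$ (and $p=q=1$ is allowed).

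The $v^\pm$ splitting as described loses a further factor. Your bound for each part is not in terms of $\nrm{\nabla u\,\ind_{\{v>0\}}}_{L^p_\sigma(Q)}$ alone, because $t_0$ involves the full gradient, so adding the two bounds counts the threshold term twice. To avoid this you would have to use that $v^+,v^-$ have disjoint supports. Simpler is to take $v=|u-\ip{u}_Q|$, which is Lipschitz in $u$ and needs no splitting, also for complex $u$.

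Tuning alone does not rescue the constant. With only $\ip{v_t}_Q\le\ip{v}_Q$, take a threshold $T=\mu\ip{v}_Q$ and adapted levels $\{2^kT<v\le 2^{k+1}T\}$. Even optimizing over $\mu$, the scheme gives $\min_{\mu>1}\bigl(2\mu+\tfrac{4\mu}{\mu-1}\bigr)=6+4\sqrt2>10$ for $q=1$.

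The missing idea is the pointwise inequality $2v_t\le v$, which gives $\ip{v_t}_Q\le\tfrac12\ip{v}_Q$. This lets the threshold be $\lambda:=\ip{v}_Q$ itself. Then:
\begin{enumerate}[(a)]
\item On $\{v\le 2\lambda\}$, bound directly $\int v^q\omega^q\le(2\lambda)^q\omega^q(Q)\le(2C\nrm{\nabla u}_{L^p_\sigma(Q)})^q$ by the second hypothesis, with no layer-cake loss.
\item On $A_{k+1}=\{2^k\lambda<v\le 2^{k+1}\lambda\}$, $k\ge1$, one has $v_{2^{k-1}\lambda}=2^{k-1}\lambda$. Hence $|v_{2^{k-1}\lambda}-\ip{v_{2^{k-1}\lambda}}_Q|\ge 2^{k-1}\lambda-\tfrac12\lambda\ge 2^{k-2}\lambda$.
\item The weak-type hypothesis then gives $(2^{k+1}\lambda)^q\omega^q(A_{k+1})\le 8^qC^q\nrm{\nabla v_{2^{k-1}\lambda}}_{L^p_\sigma(Q)}^q$.
\end{enumerate}
Summing with $\ell^p\hookrightarrow\ell^q$ yields $\nrm{u-\ip{u}_Q}_{L^q_\omega(Q)}\le(2^q+8^q)^{1/q}C\nrm{\nabla u}_{L^p_\sigma(Q)}\le 10C\nrm{\nabla u}_{L^p_\sigma(Q)}$. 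This is exactly the paper's proof.
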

\begin{proof}
    Let $u\in  W^{1,p}_\sigma(Q)$ and let $\lambda>0$ be a constant to be chosen later and for $k\in\Z_{\geq -1}$ define $\lambda_k = 2^k \lambda$. To ease notation, define $v=|u-\ip{u}_Q|$. Let $E_k := \{x\in Q: v(x)> \lambda_k\}$ and $$A_k := E_{k-1}\backslash E_k = \{x\in Q: \lambda_{k-1}<v(x)\leq  \lambda_k\}.$$    Then we have
    \begin{align}
        \int_Q v^q \omega^q &= \int_{\cbrace{v \leq 2\lambda}} v^q \omega^q + \sum_{k=1}^\infty \int_{A_{k+1}} v^q\omega^q \leq 2^q\lambda^q \omega^q(Q) + \sum_{k=1}^\infty\lambda_{k+1}^q \omega^q(A_{k+1}). \label{eq:vqomegaqsplit}
    \end{align}
    Now note that if $x\in A_{k+1}$, we have 
    \begin{align*}
        \lambda_{k-1}&= v_{\lambda_{k-1}}(x) \leq  |v_{\lambda_{k-1}}(x)-\ip{v_{\lambda_{k-1}}}_Q| + \ip{v_{\lambda_{k-1}}}_Q \\
        &\leq |v_{\lambda_{k-1}}(x)-\ip{v_{\lambda_{k-1}}}_Q| + \tfrac12\ip{v}_Q,
    \end{align*}
    since $2v_t \leq v$. Let us choose $\lambda = \ip{v}_Q$. If $\lambda=0$, then $v=0$ and the statement is trivial. Therefore we can assume $\lambda >0$. For $k\geq 1$ we have
    \[
        \lambda_{k-1} \leq  |v_{\lambda_{k-1}}(x)-\ip{v_{\lambda_{k-1}}}_Q| + \tfrac12 \lambda \leq |v_{\lambda_{k-1}}(x)-\ip{v_{\lambda_{k-1}}}_Q| + \tfrac12\lambda_{k-1}
    \]
    and thus
    \[
        \lambda_{k-2} \leq |v_{\lambda_{k-1}}(x)-\ip{v_{\lambda_{k-1}}}_Q|.
    \]
    Therefore, using our two assumptions and \eqref{eq:vqomegaqsplit},
    \begin{align*}
        \int_Q v^q \omega^q &\leq 2^q \ip{v}_Q^q \omega^q(Q) + \sum_{k=1}^\infty \lambda_{k+1}^q\omega^q(\{x\in Q:  |v_{\lambda_{k-1}}(x)-\ip{v_{\lambda_{k-1}}}_Q|\geq \lambda_{k-2}\})\\
        &\leq 2^qC^q \nrm{\nabla u}_{L^p_\sigma(Q)}^q +  8^qC^q\sum_{k=1}^\infty \nrm{\nabla  v_{\lambda_{k-1}}}_{L^p_\sigma(Q)}^q.
    \end{align*}
    Since 
    \begin{align*}
        \sum_{k=1}^\infty \int_Q |\nabla v_{\lambda_{k-1}}|^p\sigma^p = \sum_{k=1}^\infty \int_{A_k} |\nabla v|^p\sigma^p = \sum_{k=1}^\infty\int_{A_k} |\nabla u|^p\sigma^p \leq \int_Q |\nabla u|^p\sigma^p,
    \end{align*}
    the result follows from the embedding $\ell^p \hookrightarrow \ell^q$. 
\end{proof}
	Next, we show the weak implies strong principle for the two-weight fractional Poincar\'e--Sobolev inequality when $r\leq p$, again with unweighted averages on the left-hand side. We combine the proof of \cite[Theorem 4.1]{DIV16}, where the case $p=r$ is treated, again with \cite[Theorem 3.2]{FPW98}.
	\begin{proposition} \label{prop:weakstrong}
		Let $Q\subseteq \R^d$ be a cube, $1\leq r \leq  p \leq q<\infty$  and let $\omega,\sigma$ be weights with $\sigma^{-1}\in L^{p'}(Q)$. Suppose that there is a constant $C>0$ such that for all  $u\in F^{s,\sigma}_{p,r}(Q)$ we have
\begin{align*}
					\nrm{u-\ip{u}_Q}_{L^{q,\infty}_\omega(Q)} &\leq C \, [u]_{F^{s,\sigma}_{p,r}(Q)},\\
                   \frac{ \omega^q(Q)^{\frac1q}}{|Q|}\cdot \nrm{u-\ip{u}_Q}_{L^1(Q)} &\leq   C\, [u]_{F^{s,\sigma}_{p,r}(Q)}.
				\end{align*}
    Then for all $u\in F^{s,\sigma}_{p,r}(Q)$ we have  
\begin{align*}
				\nrm{u-\ip{u}_Q}_{L^{q}_\omega(Q)} \leq 58C\, [u]_{F^{s,\sigma}_{p,r}(Q)}.
			\end{align*}
			\end{proposition}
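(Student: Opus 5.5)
I will follow the proof of Proposition \ref{prop:weaktostrongclassic} verbatim up to the point where the gradient was used, and replace the local energy bound $\sum_k \int_{A_k}|\nabla u|^p\sigma^p\le \int_Q|\nabla u|^p\sigma^p$ by a fractional analogue in the spirit of \cite[Theorem 4.1]{DIV16}. Set $v=|u-\ip{u}_Q|$, $\lambda=\ip{v}_Q$ (assume $\lambda>0$), $\lambda_k=2^k\lambda$, and define $E_k$, $A_k$ and the truncations $v_{\lambda_{k-1}}$ as before. Exactly as in that proof, we get
\[
\int_Q v^q\omega^q\le 2^q\lambda^q\omega^q(Q)+\sum_{k\ge1}\lambda_{k+1}^q\,\omega^q\hab{\cbrace{|v_{\lambda_{k-1}}-\ip{v_{\lambda_{k-1}}}_Q|\ge\lambda_{k-2}}}.
\]
The first term is at most $2^qC^q[u]^q$ by the second hypothesis. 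For the second, I apply the weak-type hypothesis to $v_{\lambda_{k-1}}$, which lies in $F^{s,\sigma}_{p,r}(Q)$ because truncation is $1$-Lipschitz. This reduces everything to bounding $\sum_{k\ge1}[v_{\lambda_{k-1}}]_{F^{s,\sigma}_{p,r}(Q)}^p$ by a constant times $[u]^p_{F^{s,\sigma}_{p,r}(Q)}$. Then $\ell^p\hookrightarrow\ell^q$ finishes the argument, and the numerical constant $58$ comes from tracking the factors.

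The key step is this summation. Write $w_k:=v_{\lambda_{k-1}}$. Since $w_k$ is a $1$-Lipschitz function of $v$ and $|v(x)-v(y)|\le|u(x)-u(y)|$, we have $|w_k(x)-w_k(y)|\le|u(x)-u(y)|$. Moreover, $w_k(x)-w_k(y)=0$ unless one of $x,y$ lies in $A_k=\cbrace{\lambda_{k-1}<v\le\lambda_k}$, up to the endpoint convention. Split $\int_Q(\int_Q\cdots\dd y)^{p/r}\sigma^p\dd x$ according to whether $x\in A_k$ or not.

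For $x\in A_k$, the inner integral is bounded by $\int_Q|u(x)-u(y)|^r|x-y|^{-d-sr}\dd y$. Since the $A_k$ are disjoint, summing over $k$ gives at most $[u]^p$.

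For $x\notin A_k$, the inner integral only sees $y\in A_k$. Take, say, $x\in E_k$ (where $w_k(x)=\lambda_{k-1}$); the case $x\notin E_{k-1}$ is symmetric. Then
\[
\sum_k\has{\int_{A_k}\frac{|u(x)-u(y)|^r}{|x-y|^{d+sr}}\dd y}^{p/r}\le\has{\sum_k\int_{A_k}\frac{|u(x)-u(y)|^r}{|x-y|^{d+sr}}\dd y}^{p/r},
\]
because $p/r\ge1$. The $A_k$ are disjoint in $y$, so the right-hand side is at most $(f_Q^{s,r}(x))^p$. Integrating against $\sigma^p\dd x$ yields at most $[u]^p$ again.

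This is exactly where $r\le p$ is used: it provides superadditivity of $t\mapsto t^{p/r}$. I expect this step to be the main obstacle, in particular handling the two "outside" regions ($x$ above the level set and $x$ below it) and the bookkeeping of constants, giving a bound $\sum_k[w_k]^p\le 3[u]^p$ or similar. Inserting this into the display and collecting the constants ($2^q$, $8^q$, and the factor $3^{1/p}$) gives the claimed $58C$.
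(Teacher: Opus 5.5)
Your reduction to the bound $\sum_{k\ge1}[w_k]^p_{F^{s,\sigma}_{p,r}(Q)}\lesssim[u]^p_{F^{s,\sigma}_{p,r}(Q)}$ with $w_k=v_{\lambda_{k-1}}$ matches the paper, and your treatment of $x\in A_k$ is the paper's diagonal term. The gap is the claim that $w_k(x)-w_k(y)=0$ unless $x$ or $y$ lies in $A_k$, which is false. The function $w_k$ is constant on $E_{k-1}^c$ and on $E_k$, but with \emph{different} constants, $0$ and $\lambda_{k-1}$. Hence $|w_k(x)-w_k(y)|=\lambda_{k-1}$ whenever $x\in E_{k-1}^c$ and $y\in E_k$. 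So for $x\in E_{k-1}^c$ the inner integral runs over all of $E_{k-1}=\bigcup_{j\ge k}A_j$, and for $x\in E_k$ it runs over all of $E_k^c\supseteq E_{k-1}^c$, not only over $A_k$. This is exactly where the nonlocal seminorm differs from the gradient case, in which $\nabla w_k$ is supported in $A_k$.

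These long-range pairs cannot be controlled by $|w_k(x)-w_k(y)|\le|u(x)-u(y)|$ alone. A pair with $v(x)\le\lambda_{k_0}$ and $y\in A_j$ contributes to $[w_k]$ for every $k_0<k<j$. The crude bound therefore overcounts by the number of dyadic levels separating $v(x)$ and $v(y)$, which is unbounded, and your disjointness argument does not apply.

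The missing idea is a decay estimate for these pairs. If $x\in E_{k-1}^c$ and $y\in A_j$ with $j\ge k+1$, then $v(y)>\lambda_{j-1}\ge 2v(x)$, hence
\[
|w_k(x)-w_k(y)|=\lambda_{k-1}=2^{k-j}\lambda_{j-1}<2^{k-j}v(y)\le 2\cdot 2^{k-j}|v(x)-v(y)|.
\]
For $j=k$ the same bound follows from the $1$-Lipschitz property. So for $x\in E_{k-1}^c$ the inner integral is at most $2^r\sum_{j\ge k}2^{r(k-j)}\int_{A_j}g(x,y)\,\mathrm{d}y$, where $g(x,y)=|v(x)-v(y)|^r|x-y|^{-d-sr}$.

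Superadditivity of $t\mapsto t^{p/r}$ then lets you pull $\sum_k$ inside the power; this is where $r\le p$ is genuinely needed. Interchanging the sums produces the geometric factor $\sum_{k\le j}2^{r(k-j)}\le(1-2^{-r})^{-1}$.

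The region $x\in A_j$ with $j\ge k+1$ and $y\in E_k^c$ is handled the same way, with decay $2^{p(k-j)}$. Here the sum is directly summable in $k$ because $x$ fixes $j$, so it is not symmetric to the first case as you suggest. Together with the diagonal term this gives $\bigl(\sum_k[w_k]^p\bigr)^{1/p}\le(1+4+2)[v]_{F^{s,\sigma}_{p,r}(Q)}$, which is where $58=2+8\cdot 7$ comes from.
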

	\begin{proof} Let $u\in F^{s,\sigma}_{p,r}(Q)$ and by density assume without loss of generality that $u \in L^\infty(Q)$. Define $v=|u-\ip{u}_Q|$. Estimating $[v]_{F^{s,\sigma}_{p,r}(Q)}$ is enough since by the reverse triangle inequality we have $$[v]_{F^{s,\sigma}_{p,r}(Q)} \leq [u]_{F^{s,\sigma}_{p,r}(Q)}.$$ Following the first steps of the proof of Proposition \ref{prop:weaktostrongclassic}, we obtain
    \begin{align*}
        \int_Q v^q \omega^q &\leq 2^q \ip{v}_Q^q \omega^q(Q) + \sum_{k=1}^\infty \lambda_{k+1}^q\omega^q(\{x\in Q:  |v_{\lambda_{k-1}}(x)-\ip{v_{\lambda_{k-1}}}_Q|\geq \lambda_{k-2}\}),
    \end{align*}
    where $\lambda_k=2^k \ip{v}_Q$. Again, if $\ip{v}_Q=0$, then $v=0$ and the statement is trivial. Thus we can assume $\ip{v}_Q >0$. Let us call the first term $S$ and the second term $T$. Using our second assumption, 
    \[
             S\leq  2^qC^q [u]_{F^{s,\sigma}_{p,r}(Q)}^q.
    \]
    For the second term, we can use the first assumption of the proposition on the truncated functions $v_{\lambda_{k-1}}$ to estimate
        \begin{align*}
            T &= 8^q\sum_{k =1}^\infty \lambda_{k-2}^{q}\omega^q\hab{\{x\in Q: |v_{\lambda_{k-1}}(x)-\ip{v_{\lambda_{k-1}}}_Q| \geq \lambda_{k-2}\}} \\
            &\leq  8^qC^q \sum_{k=1}^\infty \has{\int_Q \has{\int_Q \frac{(v_{\lambda_{k-1}}(x)-v_{\lambda_{k-1}}(y))^r}{|x-y|^{d+sr}} \dd y}^{p/r}\sigma^p(x)\dd x}^{q/p} \\
            &\leq  8^qC^q \has{\sum_{k=1}^\infty \int_Q \has{\int_Q \frac{(v_{\lambda_{k-1}}(x)-v_{\lambda_{k-1}}(y))^r}{|x-y|^{d+sr}} \dd y}^{p/r}\sigma^p(x)\dd x}^{q/p},
        \end{align*}
        where we used $p\leq q$ in the last step. For $k \in \Z_{\geq 1}$ define
		\begin{align*}
			g_k(x,y)&:=\frac{|v_{\lambda_{k-1}}(x) - v_{\lambda_{k-1}}(y)|^r }{|x-y|^{d+sr}}, &&x,y \in Q, \\
			g(x,y)&:=\frac{|v(x) - v(y)|^r }{|x-y|^{d+sr}}, &&x,y \in Q.
		\end{align*}
		and, using $g_k(x,y) = 0$ for $x,y \in E_k$ or $x,y \in E_{k-1}^c$,         
        decompose 
		\begin{align*}
			\int_Q &\Big(\int_Q g_k(x,y) \dd y\Big)^{p/r}\sigma(x)^p \dx	\\&= \bigg\{\int_{A_{k}}+ \int_{E_{k-1}^c}+\int_{E_{k}}\bigg\}\Big( \int_{Q} g_k(x,y) \dd y\Big)^{p/r}\sigma(x)^p \dx  \\
			&= \int_{A_k}\Big( \int_{Q} g_k(x,y) \dd y\Big)^{p/r}\sigma(x)^p \dx + \int_{E^c_{k-1}}\Big( \sum_{j\geq {k}}\int_{A_{j}} g_k(x,y) \dd y\Big)^{p/r}\sigma(x)^p \dx  \\
			&\hspace{2cm}+ \sum_{j\geq {k+1}}\int_{A_{j}}\Big( \int_{E_{k}^c} g_k(x,y) \dd y\Big)^{p/r}\sigma(x)^p \dx\\
            &=:T_1(k)+T_2(k)+T_3(k).
		\end{align*}

For the diagonal term $T_1(k)$, we use 
\begin{align}\label{eq:Lipschitz}
    \abs{v_{\lambda_{k-1}}(x) - v_{\lambda_{k-1}}(y)} \leq \abs{v(x)-v(y)}
\end{align} to obtain
		\begin{align*}
			\has{\sum_{k=1}^\infty T_1(k)}^{1/p} \leq \has{\sum_{k=1}^\infty\int_{A_k}\Big( \int_{Q} g(x,y) \dd y\Big)^{p/r}\sigma(x)^p \dx}^{1/p} \leq [v]_{F^{s,\sigma}_{p,r}(Q)}.
		\end{align*}

		For the first off-diagonal error term $T_2(k)$, fix $j\geq k$. We note that for $x\in E_{k-1}^c$ and $y\in A_j$, we have
		\begin{align}
			\abs{v_{\lambda_{k-1}}(x) - v_{\lambda_{k-1}}(y)} < 2\cdot 2^{k-j}\abs{v(x)-v(y)}. \label{eq:weakstrongest2kj}
		\end{align}
		Indeed, for $j=k$ this follows from \eqref{eq:Lipschitz}.
 If $j\geq k+1$, then $$2\cdot v(x)\leq \lambda_{j-1} < v(y)$$ and $v_{\lambda_{k-1}}(x)=0$ and $v_{\lambda_{k-1}}(y)=\lambda_{k-1}$, so 
 \[
		\abs{v_{\lambda_{k-1}}(x) - v_{\lambda_{k-1}}(y)} = \lambda_{k-1} = 2^{k-j}\cdot   \lambda_{j-1} <  2^{k-j}\cdot v(y) < 2 \cdot 2^{k-j}\abs{v(x)-v(y)}.
		\]
    Therefore,
		\begin{align*}
			\sum_{k=1}^\infty T_2(k) &\leq \sum_{k=1}^\infty \int_{E_k^c}\Big(\sum_{j\geq k} 2^r\cdot 2^{r(k-j)}\int_{A_j} g(x,y) \dd y\Big)^{p/r}\sigma(x)^p \dx \\
			&\leq  2^p\int_{Q}\Big(\sum_{k=1}^\infty\sum_{j\geq k} 2^{r(k-j)}\int_{A_j} g(x,y) \dd y\Big)^{p/r}\sigma(x)^p \dx,
		\end{align*}
		where we used that $\ell^r\hookrightarrow \ell^p$ since $r\leq p$. Changing the order of summation gives
		\begin{align*}
			\sum_{k=1}^\infty\sum_{j\geq k} 2^{r(k-j)}\int_{A_j} g(x,y) \dd y &= \sum_{j=1}^\infty 2^{-jr}\int_{A_j} g(x,y) \dd y \cdot \sum_{1\leq k\leq j}   2^{kr} \leq \frac{1}{1-2^{-r}} \int_{Q} g(x,y) \dd y.
		\end{align*}
		Thus
		\begin{align*}
			\has{\sum_{k=1}^\infty T_2(k)}^{1/p} \leq 2 \cdot  \has{ \int_{Q}\Big(\frac{1}{1-2^{-r}} \int_{Q} g(x,y) \dd y\Big)^{p/r}\sigma(x)^p \dx}^{1/p} \leq 4 \cdot [v]_{F^{s,\sigma}_{p,r}(Q)}.
		\end{align*}
		Similarly, for the second off-diagonal error term $T_3(k)$, we have
		\begin{align*}
			\has{\sum_{k=1}^\infty T_3(k)}^{1/p}  &\leq 2 \has{\sum_{k=1}^\infty \sum_{j\geq k+1}2^{p(k-j)}  \int_{A_j} \Big( \int_{Q} g(x,y) \dd y\Big)^{p/r}\sigma(x)^p \dx}^{1/p} \\
			&\leq 2\cdot [v]_{F^{s,\sigma}_{p,r}(Q)}.
		\end{align*}
        This finishes the proof.
	\end{proof}

\bibliographystyle{alpha}
\bibliography{bibliography}
\end{document}